\definecolor{frenchblue}{rgb}{0.0, 0.45, 0.73}
\theoremstyle{definition}
\newtheorem{theorem}{Theorem}[section]
\newtheorem{prop}[theorem]{Proposition}
\newtheorem{lemma}[theorem]{Lemma}
\newtheorem{defn}[theorem]{Definition}
\newtheorem{rmk}[theorem]{Remark}
\newtheorem{exam}[theorem]{Example}
\newtheorem{thm}[theorem]{Theorem}
\newcommand{\C}{{\mathbb C}}
\newcommand{\Z}{{\mathbb{Z}}}
\newcommand{\upi}{\underline{\pi}}
\newcommand{\uH}{\underline{H}}
\newcommand{\uZp}{\underline{\Z/p}}
\newcommand{\Pic}{\mbox{Pic}}
\newcommand{\Ho}{\mbox{Ho}}
\newcommand{\SP}{\mbox{Sp}}
\newcommand{\F}{\mathbb F}
\newcommand\FF{{\mathcal F}}
\newcommand\GG{{\mathcal G}}
\newcommand\HH{{\mathcal H}}
\newcommand\II{{\mathcal I}}
\newcommand\LL{{\mathcal L}}
\newcommand\MM{{\mathcal M}}
\newcommand\PP{{\mathcal P}}
\newcommand\RR{{\mathcal R}}
\newcommand\PMF{{\PP\kern-2pt\MM\FF}}
\newcommand\PML{{\PP\kern-2pt\MM\LL}}
\newcommand\Mod{\operatorname{Mod}}
\newcommand\tr{\operatorname{tr}}
\newcommand{\pair}[1]{\langle #1\rangle}
\newcommand{\fsubd}{\mathrel{{\scriptstyle\searrow}\kern-1ex^d\kern0.5ex}}
\newcommand{\bsubd}{\mathrel{{\scriptstyle\swarrow}\kern-1.6ex^d\kern0.8ex}}
\newcommand{\fsubeq}{\mathrel{\raise-.7ex\hbox{$\overset{\searrow}{=}$}}}
\newcommand{\bsubeq}{\mathrel{\raise-.7ex\hbox{$\overset{\swarrow}{=}$}}}
\newcommand{\tsh}[1]{\left\{\kern-.9ex\left\{#1\right\}\kern-.9ex\right\}}
\renewcommand{\Im}{\operatorname{Im}}
\numberwithin{equation}{section}
\newenvironment{myeq}[1][]
{\stepcounter{theorem}\begin{equation}\tag{\thetheorem}{#1}}
	{\end{equation}}
\newenvironment{meq*}[1][]
{\stepcounter{theorem}\begin{equation*}\tag{\thetheorem}{#1}}
	{\end{equation*}}
\newtheorem{subsec}[theorem]{}
\newenvironment{mysubsection}[2][]
{\begin{subsec}\begin{upshape}\begin{bfseries}{#2.}
			\end{bfseries}{#1}}
		{\end{upshape}\end{subsec}}
\newcommand{\bZ}{\langle \Z \rangle}
\newcommand{\bZpboxM}{\bZ_p\boxtimes \uM}
\newcommand{\id}{\textup{id}}
\newcommand{\uA}{\underline{A}}
\newcommand{\uM}{\underline{M}}
\newcommand{\uN}{\underline{N}}
\newcommand{\hm}{\hat{m}}
\newtheorem{notation}[theorem]{Notation}
\newcommand{\msp}{\mspace{2mu}}
\newcommand{\mcal}[1]{\mathcal{#1}}
\newcommand{\uZ}{\underline{\mathbb{Z}}}
\newcommand{\uk}{\underline{k}}
\newcommand{\G}{{\mathbb G}}
\newtheorem*{theorem*}{Theorem}
\newtheorem*{thma}{Theorem A}
\newtheorem*{thmb}{Theorem B}
\newtheorem*{thmc}{Theorem C}
\newtheorem*{thmd}{Theorem D}
\theoremstyle{remark}
\newtheorem{ack}[theorem]{Acknowledgements}
\newcommand{\smas}{\wedge}
\newcommand{\res}{\textup{res}}
\newcommand{\Hom}{\textup{Hom}}
\renewcommand{\tr}{\textup{tr}}	
\renewcommand{\div}{\textup{div}}	
\newcommand{\lcm}{\textup{lcm}}
\newcommand{\namelabel}[1]{%
	\phantomsection
	\renewcommand{\@currentlabel}{#1}
	\label{#1}
}
\newcommand{\mbn}{M\square N}
\newcommand{\otm}{\otimes}
\tikzset{notestyleraw/.append style={align=justify}}
\title[]{Equivariant cohomology  for cyclic groups}
\author{Samik Basu, Pinka Dey}
\address{Statistics and Mathematics Unit,
Indian Statistical Institute,
Kolkata-700108, West Bengal, India.}
\email{samik.basu2@gmail.com; samikbasu@isical.ac.in}
\address{Stat-Math Unit,
	Indian Statistical Institute,
	B. T. Road, Kolkata-700108, India.}
\email{pinkadey11@gmail.com}
\subjclass[2020]{Primary: 55N91, 57S17; Secondary: 55P91, 55Q91.}
\keywords{Equivariant homotopy, equivariant homology and cohomology, Mackey functors, stable homotopy theory.}
\begin{document}
	\begin{abstract}
In this paper, we compute the $RO(C_n)$-graded coefficient ring of equivariant cohomology for cyclic groups $C_n$, in the case of Burnside ring coefficients, and in the case of constant coefficients. We use the invertible Mackey functors under the box product to reduce the gradings in the computation from $RO(C_n)$ to those expressable as combinations of $\lambda^d$ for divisors $d$ of $n$, where $\lambda$ is the inclusion of $C_n$ in $S^1$ as the roots of unity. We make explicit computations for the geometric fixed points for Burnside ring coefficients, and in the positive cone for constant coefficients. The positive cone is also computed for the Burnside ring in the case of prime power order, and in the case of square free order. Finally, we also make computations at non-negative gradings for the constant coefficients.
	\end{abstract}
	\maketitle
	
	\section{Introduction}

In the classical setting, equivariant homotopy theory was extremely useful in interpreting computations for classifying spaces of groups. The Atiyah-Segal completion theorem \cite{AS69} about the complex $K$-theory of $BG$, the classifying space of $G$, yields a relation between the equivariant $K$-theory of a $G$-space $X$ and the non-equivariant $K$-theory of it's Borel construction $X_{hG}$. The Borel construction is defined as 
\[ X_{hG} := X\times_G EG,\] 
where $EG\to BG$ is the universal $G$-bundle. A similar story can be weaved around the Sullivan's conjecture comparing $p$-completions of homotopy fixed points to the actual fixed points, and maps out of classifying spaces \cite[Chapter VIII]{May96}. In fact, many techniques used in equivariant stable homotopy theory were developed to understand the Segal conjecture \cite{Ada84}. There is a renewed interest in the subject arising from the use of equivariant homotopy theory to resolve the Kervaire invariant one problem \cite{HHR16}. 

In terms of computations of homotopy classes $[X,Y]$, the simplest case that may arise is when $Y$ is an Eilenberg-MacLane spectrum, and this gives rise to ``ordinary cohomology theories". It takes a little work to define these in the equivariant context, as the homotopy groups carry a Mackey functor \cite{TW95} structure coming from the restriction, transfer, and conjugation maps. It is fair then to consider Eilenberg MacLane spectra $H\uM$ for Mackey functors $\uM$,  as those whose homotopy groups are concentrated in degree $0$, where it equals $\uM$ \cite{GM95a}. The ordinary cohomology theory thus defined by a Mackey functor, carries a Mackey functor structure, as well as an extension of grading from the integers to the real representation ring $RO(G)$. 

For a finite group $G$, and a real representation $V$ of $G$, one may endow $V$ with a $G$-invariant inner product. This allows us to define 
\[ D(V)= \{ v\in V \mid \pair{v,v} \leq 1 \}, ~\mbox{  }~  S(V)= \{ v\in V \mid \pair{v,v} = 1 \}, ~\mbox{  }~ S^V = D(V)/S(V),\] 
the unit disk in $V$, the unit sphere in $V$, and the one-point compactification of $V$, respectively. In the equivariant stable homotopy category, the $V$-fold suspension functors $X \mapsto \Sigma^V X = X\wedge S^V$ are invertible. We refer to \cite{MM02,LMS86} for the construction of the equivariant stable homotopy category, as the homotopy category of $G$-spectra denoted by $\SP^G$. As the $V$-fold suspension functors are invertible, one has a spectrum $S^\alpha$ for $\alpha \in RO(G)$, which may be used to equip the homotopy classes of maps with a $RO(G)$-grading. 

Given a $G$-space $X$, the ordinary cohomology with coefficients in the Mackey functor $\uM$ at $\alpha\in RO(G)$ is denoted by $H^\alpha_G(X;\uM):= \pi_{-\alpha}^G F(X,H\uM)$ for the underlying group, and $\uH^\alpha_G(X;\uM):=  \upi_{-\alpha}^G F(X,H\uM)$ for the underlying Mackey functor. The notation $F(-,-)$ refers to the function spectrum. The first step towards computing such a cohomology theory is to compute it in the case $X=S^0$ the sphere spectrum. This turns out to be a difficult problem to describe in general, and has been carried out in very restricted cases when the groups possess a simple structure for the poset of subgroups \cite{Lew88, BG19, BG20, Zen18, KL20}. The interesting coefficients in this context are $\uA$ (the Burnside ring Mackey functor), $\uZ$ (the constant Mackey functor) or $\uZp$ (the constant Mackey functor associated to $\Z/p$).  In this paper, we are interested in computations of $\pi_\alpha^G H\uA$, and $\pi_\alpha^G H\uZ$, in the case $G=C_n$, a cyclic group of order $n$. Even though the poset of subgroups for cyclic groups are relatively simple, earlier attempts \cite{BG21b} warn us that computations may not necessarily yield easy formulas. Here, we follow an opportunistic approach, and identify sectors of $RO(G)$ where we can make complete calculations. 

The Mackey functors $\uA$ and $\uZ$ have a commutative multiplicative structure which makes the spectra $H\uA$ and $H\uZ$ commutative ring objects in the symmetric monoidal category of $G$-spectra. This implies a rich multiplicative structure on homotopy groups in terms of multiplicative norms. In particular, $\pi_\bigstar (H\uA)$ and $\pi_\bigstar(H\uZ)$ are graded commutative rings. 

\begin{mysubsection}{Computational techniques} 
Before we describe our results, we point out the computational techniques available to us. An advantage for cyclic groups is that the irreducible representations are described using powers of $\lambda$, the one dimensional complex representation associated to the inclusion $C_n \subset S^1$ as the $n^{th}$-roots of unity. A $G$-representation $V$ defines an Euler class $a_V \in \pi_{-V}(S^0)$ given by the inclusion $S^0 \to S^V$. One has the equation $a_{V\oplus W} = a_V a_W$, which implies that for the cyclic group $C_n$, these are monomials in $a_{\lambda^s}$ for $0\leq s <n$. The Euler class is part of a Gysin cofibre sequence
\[ S(\lambda^s)_+ \to S^0 \stackrel{a_{\lambda^s}}{\to} S^{\lambda^s}.\]
Smashing with a spectrum $X$ and computing $RO(G)$-graded homotopy groups gives a long exact sequence in which two terms are $\pi_\bigstar X$ and a shift of $\pi_\bigstar X$, while the third $\pi_\bigstar (S(\lambda^s)_+ \wedge X)$ can be computed from a proper subgroup of $C_n$. We call this the {\it cofibre sequence method}. 

The second technique we use is the {\it Tate diagram method}. For $1\leq s < n$, write 
\[E_s = S(\infty \lambda^s)= \varinjlim_n S(n\lambda^s),	~~~ E^s = S^{\infty \lambda^s} = \varinjlim_n S^{n\lambda^s}.\]
We have the following commutative diagram \cite{GM95} in which the rows are cofibre sequences. 
\begin{equation*}
			\begin{tikzcd}[scale cd=.92]
			{E_s}_+\wedge X \arrow[r,""]\arrow["\simeq"]{d} & X  \arrow[r,""]\arrow[""]{d} & E^s \wedge X\arrow[""]{d}
				\\
				{E_s}_+\wedge F({E_s}_+,X)\arrow[r,""] &  F({E_s}_+,X) \arrow[r,""] & E^s\wedge F({E_s}_+,X) 
			\end{tikzcd}
		\end{equation*}
The idea is that the bottom row is computable using subgroups of the cyclic group, and in this way, we know the top left corner. The map $X \to E^s \wedge X$ is basically obtained by inverting the Euler class $a_{\lambda^s}$ in case $X$ is a ring spectrum. Applying both of these together often leads to explicit calculations. 

The third technique is called the {\it Bredon cohomology method}. This is given by the formulas for a representation $V$
\[ \pi_{n-V} (H\uM) \cong \pi_n(S^V \wedge H\uM) \cong H_n^G(S^V;\uM), \]
\[\pi_{V-n}(H\uM) \cong \pi_{-n} (S^{-V}\wedge H\uM) \cong H^n_{G} (S^V;\uM).\]
The groups $H_n^G(S^V;\uM)$ and $H^n_G(S^V;\uM)$ can be computed by writing down an equivariant CW complex structure on the space $S^V$, and then via cellular homology and cohomology. We use these three techniques judiciously to uncover computations of $\pi_\bigstar^{C_n} H\uA$ and $\pi_\bigstar^{C_n} H\uZ$ for $\bigstar$ lying in various sectors of $RO(C_n)$. 
\end{mysubsection}

\begin{mysubsection}{Reduction to divisor gradings}
Let $\Pic(\Ho(\SP^G))$ denote the group of isomorphism classes of invertible $G$-spectra \cite{FLM01}. It is proved therein that there is an exact sequence
\[ 0 \to \Pic(A(G)) \to \Pic(\Ho(\SP^G)) \stackrel{d}{\to} C(G),\]
where $C(G)$ denotes the additive group of functions from the conjugacy classes of subgroups to $\Z$. For $G=C_n$, the Picard group $\Pic(\Ho(\SP^{C_n}))$ has been computed in \cite{Ang21}, where he shows that the map $RO(C_n) \to \Pic(\Ho(\SP^{C_n}))$ given by $\alpha \mapsto S^\alpha$ is surjective, and constructs the following commutative diagram.
\[\xymatrix{ 0\ar[r] & RO_0(C_n) \ar[r] \ar@{->>}[d]  & RO(C_n) \ar[r]^{d} \ar@{->>}[d] & C(C_n) \ar@{=}[d] \\
0 \ar[r] & \Pic(A(C_n)) \ar[r] & \Pic(\Ho(\SP^{C_n})) \ar[r]^-{d} & C(C_n) .}
\] 
The map $RO(C_n) \to C(C_n)$ sends $\alpha \in RO(C_n)$ to the function which maps a subgroup $H$ to the fixed point dimension $\dim(\alpha^H)$. We denote this by $|\alpha^H|$. The kernel of this map is denoted by $RO_0(C_n)$ which comprises $\alpha\in RO(C_n)$ such that $|\alpha^H|=0$ for all subgroups $H$. This is additively generated by elements of the form $\lambda^s - \lambda^{ks}$ for $\gcd(k,n)=1$ and $1\leq s < n$. The group \cite{Ang21}
\[ \Pic(A(C_n)) \cong \prod_{d \mid n} (\Z/d)^\times /\{\pm 1\}.\]  
We rewrite this in a form convenient for equivariant cohomology computations. Let $\MM_G$ be the category of $G$-Mackey functors. This is a symmetric monoidal category with $\Box$ as the monoidal structure and $\uA$ as the unit object. We identify $\Pic(A(C_n))$ with $\Pic(\MM_G)$ using a slightly different isomorphism (Theorem \ref{isocls}) than \cite{Ang21}. The elements of $\Pic(\MM_G)$ are denoted $\uA[\tau]$ and enumerated using functions 
\[ \tau \in \{\mbox{Divisors of } n\} \to \Z,~~ d \mid n \mapsto \tau_d, \mbox{ with } \gcd(\tau_d, \frac{n}{d})=1,\]
and we have an explicit function out of $RO_0(C_n)$ which sends $\alpha \mapsto \tau(\alpha)$ explicitly describing the left vertical arrow of the above diagram. In these terms, Theorem \ref{mfallze} reproves \cite[Theorem 5.1]{Ang21} and shows 
\begin{myeq}\label{Azero}
\upi_\alpha(H\uA) \cong \uA[\tau(\alpha)].
\end{myeq}

We now have a sequence of symmetric monoidal functors 
\[ \SP^{C_n} \stackrel{-\wedge H\uA}{\longrightarrow} H\uA-\Mod \stackrel{-\wedge_{H\uA} H\uZ}{\longrightarrow} H\uZ-\Mod.\] 
These preserve the invertible objects. The above computation shows that the image of $\Pic(A(C_n))$ is easily computable, with the image in $H\uA-\Mod$ given by an actual Mackey functor $\uA[\tau]$. The composition onto $H\uZ-\Mod$ is even easier, they all map to the unit object $H\uZ$ \cite[Theorem B]{Ang21}. The group $RO(C_n)/RO_0(C_n)$ is spanned by representations of the form $\lambda^d$ for $d\mid n$ together with the sign representation $\sigma$ if $n$ is even. This allows us to define {\it divisor gradings} $\bigstar_{\pm \div}$.  
\begin{defn}
The sector $\bigstar_{\pm \div} \subset RO(C_n)$ is defined to be  linear combinations of the form $ \ell +\epsilon \sigma+\sum_{d_i\mid n}~~b_i\mspace{2mu} \lambda^{d_i} $  with $ b_i, \ell, \in \Z, $ and $ \epsilon\in \{0,1\} $.
\end{defn}
We then have $RO(C_n)$ is a direct sum of $RO_0(C_n)$ and the subgroup corresponding to $\bigstar_{\pm \div}$. Any $\alpha\in RO(C_n)$ can be uniquely represented as $\alpha_0 + \alpha_\div$ in this decomposition, and we have the formula 
\[ \upi_\alpha(H\uA) \cong \upi_{\alpha_\div}H\uA \Box \uA[\tau(\alpha_0)], \mbox{ and } \upi_\alpha(H\uZ) \cong \upi_{\alpha_\div}(H\uZ).\] 
For $\uZ$, this means that there are units in $\pi_\alpha (H\uZ)$ for $\alpha \in RO_0(C_n)$ which multiply coherently, and we have an isomorphism of rings 
\[ \pi_\bigstar(H\uZ) \cong \pi_{\bigstar_{\pm \div}}(H\uZ) \otimes \pi_{\bigstar_0}H\uZ,\]
where $\bigstar_0$ is the sector corresponding to $RO_0(C_n)$. The second factor is easily described multiplicatively by writing down a basis of $RO_0(C_n)$ and adjoining $ \mu_b^{\pm 1}$ for each basis element $b$. 

For $\uA$, $\pi_{\bigstar_0}(H\uA)$ is describable using the multiplicative isomorphism given in \eqref{Azero}. However, we do not venture into this exercise, as writing this in terms of generators and relations leads to unnecessary notational complexity. Instead, we write down generators $\mu(\alpha)$ for each $\alpha \in \bigstar_0$, and the explicit formula for them may be used for calculating products. Now, we point out that if $\upi_{\bigstar_{\pm \div}}(H\uA)$  is known one may use \eqref{Azero} to go to $\upi_\bigstar(H\uA)$. We demonstrate this explicitly in two computations. (see \S \ref{ringstrp}) \\
(1) We compute $\pi_{\bigstar_{\pm \div}}H\uA$ for $G=C_p$ using the Tate diagram method. This gives an alternative way of understanding the calculations in \cite{Lew88}, and gives a much simpler formula. \\
(2) We show how to perform the calculations for $G=C_{pq}$, which completes the additive decomposition that was started in \cite{BG19}. 

\end{mysubsection}

\begin{mysubsection}{Constructing cohomology classes} 
The Euler classes $a_V\in \pi_{-V}(S^0)$ induce elements in the homotopy groups of $H\uA$ and $H\uZ$ via the unit map. For an orientable representation $V$, the orientation class in $H_{\dim(V)}(S^V)$ is $G$-equivariant, which means that it gives a class $u_V \in H_{\dim(V)}^G(S^V;\uZ)$. Thus, $u_V \in \pi_{\dim(V)-V}(H\uZ)$. We show that for a one dimensional complex representation $\xi$ (which is automatically orientable) $\pi_{2-\xi}(H\uA)\cong A(\ker(\xi))$ \eqref{eq:defining u-class} where we identify $\xi$ with the associated homomorphism $G \to S^1$. This is then used to lift the orientation class $u_\xi\in \pi_{2-\xi}(H\uZ)$ to $\pi_{2-\xi}(H\uA)$, which is then denoted by $u_\xi^A$ (Definition \ref{def:u-class}).  

The Euler classes bear the relation \eqref{eq:au general} (we call these {\it a-relations})
\[		I_{\tr^G_{\ker(\xi)}} a_{\xi} =0, \]
where $I_{\tr^G_{\ker(\xi)}}$ denotes the image of the transfer from $A(\ker(\xi))\to A(G)$. The relations for the classes $u_\xi$ are more involved. 	Firstly, the group in which $u_\xi$ lives is $A(\ker(\xi))$ which is not usually a cyclic $A(G)$-module, so it will have a set of generators which we denote by $u_\xi^{(\ker(\xi))}$ \eqref{genH}. We then write down the relations for these classes (which we call {\it u-relations}) using the kernel of the restriction map from $A(G)$ to $A(\ker(\xi))$ (Proposition \ref{prop:u-rel}). The Euler classes and the orientation classes satisfy the {\it gold relations} (or {\it au-relations}) described in Theorem \ref{prop:au-rel}. 
  
\end{mysubsection}

\begin{mysubsection}{Geometric fixed points} Let $\PP$ be the family of proper subgroups of $G$. The geometric fixed points of a $G$-spectrum $X$ is defined as 
\[\Phi^G(X) = (\widetilde{E\PP} \wedge X)^G.\]
The $G$-space $E\PP$ is characterized by the property 
\[ E\PP^H = \begin{cases} \ast &\mbox{if } H \in \PP \\ \varnothing & \mbox{if } H \not\in \PP, \end{cases}
\]
and $\widetilde{E\PP}$ is defined using the cofibre sequence 
\[ E\PP_+ \to S^0 \to \widetilde{E\PP}.\]
For the cyclic group $C_n$, we may write $n={p_1^{n_1}\dots p_k^{n_k}} $, and $s_i = \frac{n}{p_i}$. Then, 
\[ \widetilde{E\PP} \simeq E^{s_1}\wedge \cdots \wedge E^{s_k}.\]
This points us to two consequences. For a commutative ring spectrum $X$, $\pi_\ast(\Phi^{C_n}(X))$ is obtained from $\pi_\bigstar(X)$ by inverting the Euler classes and restricting to integer gradings. Further, iterating the Tate diagram method is useful in computing the homotopy groups of the geometric fixed point spectrum. These have been computed for $\Phi^{C_{p^n}}H\uZ$ (follows from \cite{HHR16}), and for  $\Phi^{(C_p)^n}H\uZ$ \cite{HK17,Kri20}. If $n$ has more than one prime factor (that is, $k\geq 2$), $\pi_\ast\Phi^{C_n}H\uZ = 0$. This is however, not true for $\uA$, and we prove (see Theorem \ref{thm:geo})
\begin{thma}
Let  $n={p_1^{n_1}\dots p_k^{n_k}} $.  For $ i=1 $ to $  k $, let $ H_i $ be  the subgroup of $ G $ of order $ |H_i|=n/p_i $. Then we have
		\begin{equation*}
			\pi_*(\Phi^{C_n}(H\uA))\cong \Z\oplus \bigoplus^k_{\ell=1} \bigoplus_{t>0}\frac{A(H_\ell)}{\langle   I_{H_\ell}, p_\ell \rangle}\{(u_{\lambda^{n/p_\ell}}/a_{\lambda^{n/p_\ell}})^t\}
		\end{equation*}
		where $p_\ell=[G:H_\ell]$ and  the ideal $ I_{H_\ell} $  consists  the transfers
		\begin{equation*}
			\big \langle  \tr_{H_i\cap H_\ell}^{H_\ell} A(H_i\cap H_\ell) \mid 1\le i\le k, i\ne \ell\big\rangle. 
		\end{equation*}
\end{thma}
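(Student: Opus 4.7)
The plan is to exhibit $\pi_*(\Phi^{C_n}(H\uA))$ as the integer-graded part of a localization of $\pi_\bigstar(H\uA)$, and then use the $a$-, $u$-, and gold relations developed earlier to extract the stated presentation. First I use the factorization $\widetilde{E\PP}\simeq E^{s_1}\wedge\cdots\wedge E^{s_k}$ with $s_\ell=n/p_\ell$. Since $H\uA$ is a commutative ring spectrum and $E^{s_\ell}=S^{\infty\lambda^{s_\ell}}$, smashing with $E^{s_\ell}$ amounts to inverting the Euler class $a_{\lambda^{s_\ell}}$, as noted in the discussion of the Tate diagram method. Hence
\[
\pi_*\bigl(\Phi^{C_n}(H\uA)\bigr)\;\cong\;\bigl(\pi_\bigstar(H\uA)\bigl[a_{\lambda^{s_1}}^{-1},\ldots,a_{\lambda^{s_k}}^{-1}\bigr]\bigr)_{0},
\]
where $(-)_0$ denotes the integer-graded summand.

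Next I identify the candidate generators in integer gradings. A monomial of the form $\prod_\ell u_{\lambda^{s_\ell}}^{c_\ell}\,a_{\lambda^{s_\ell}}^{-d_\ell}$ has grading $2\sum_\ell c_\ell-\sum_\ell (c_\ell-d_\ell)\lambda^{s_\ell}$, so it lies in an integer grading exactly when $c_\ell=d_\ell$ for every $\ell$. Therefore the integer-graded classes are spanned by products of the degree-$2$ ratios $u_{\lambda^{s_\ell}}/a_{\lambda^{s_\ell}}$, with coefficients drawn from $\pi_{2-\lambda^{s_\ell}}(H\uA)\cong A(H_\ell)$.

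I then apply the $a$-relations $I_{\tr^G_{H_\ell}}\cdot a_{\lambda^{s_\ell}}=0$, valid because $\ker(\lambda^{s_\ell})=H_\ell$. Since $a_{\lambda^{s_\ell}}$ is inverted in the localization, this collapses to $I_{\tr^G_{H_\ell}}=0$; combining over all $\ell$ reduces the degree-$0$ part to $\Z$, as every non-identity basis element $[G/H]$ of $A(G)$ is a transfer from some maximal proper subgroup $H_\ell$. Within the $\ell$-th tower, the $a$-relations for the other primes $p_i$ ($i\ne\ell$) applied in the module $A(H_\ell)$ where $u_{\lambda^{s_\ell}}$ takes values force the transfers from $H_i\cap H_\ell$ in $A(H_\ell)$ to vanish, yielding the ideal $I_{H_\ell}$.

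The remaining step is to show (i) that mixed products $(u_{\lambda^{s_\ell}}/a_{\lambda^{s_\ell}})(u_{\lambda^{s_m}}/a_{\lambda^{s_m}})$ vanish for $\ell\ne m$, so the towers are independent, and (ii) that $p_\ell$ lies in the annihilator of the $\ell$-th tower. Both should come from the gold relations of Theorem \ref{prop:au-rel}: the $au$-relation between distinct characters $\lambda^{s_\ell}$ and $\lambda^{s_m}$ expresses $u_{\lambda^{s_\ell}}u_{\lambda^{s_m}}$ as a transfer from $H_\ell\cap H_m$ multiplied by a product of $a$-classes that become units but multiply a transfer killed by the $a$-inversion, yielding (i); and the gold relation for $\lambda^{s_\ell}$ alone represents $p_\ell\,u_{\lambda^{s_\ell}}$, modulo $a_{\lambda^{s_\ell}}$-torsion, as an element of the transfer ideal $I_{H_\ell}\subset A(H_\ell)$, giving (ii). The main obstacle will be (i): extracting the precise form of the gold relations among several distinct $\lambda^{s_\ell}$'s for cyclic groups with multiple prime factors, and identifying the resulting transfer terms with the ideals $I_{H_\ell}$ in the right way, will require the most care.
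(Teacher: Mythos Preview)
Your approach is genuinely different from the paper's. The paper does \emph{not} compute the localization $\pi_\bigstar(H\uA)[a_{\lambda^{s_1}}^{-1},\ldots,a_{\lambda^{s_k}}^{-1}]$ directly; instead it builds a filtration of $\widetilde{E\PP}\wedge H\uA$ from the cofibre sequences $S(\infty\lambda^{s_\ell})_+\to S^0\to S^{\infty\lambda^{s_\ell}}$, writes down the resulting spectral sequence with $E^1$-page \eqref{eq:geo ss}, and computes $E^2$ by identifying each graded piece with an augmented simplicial chain complex of a simplex. The multiplicative generators $u_{H_\ell}/a_{H_\ell}$ are then located via the ring maps $\psi_\ell$ of \eqref{eq:geo ring map} coming from the single-factor Tate computation in Lemma~\ref{lem:geo Tate}. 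The $p_\ell$-torsion and the ideal $I_{H_\ell}$ emerge from explicit chain-level calculations, not from the gold relations.

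Your localization strategy has a real gap at its first substantive step. You assert that the integer-graded part of the localization is spanned by monomials in the $u_{\lambda^{s_\ell}}/a_{\lambda^{s_\ell}}$ with coefficients in $A(H_\ell)$. But this presupposes that $\pi^G_{m-\sum c_\ell\lambda^{s_\ell}}(H\uA)$ is generated by $a$- and $u$-classes for all $c_\ell\ge 0$, i.e.\ that the positive cone at these gradings is as in Theorem~C. The paper only establishes this for $n$ a prime power or square-free; for general $n=p_1^{n_1}\cdots p_k^{n_k}$ it is not proved, so you cannot invoke it. Without that input, nothing prevents the colimit $\varinjlim_{c}\pi^G_{m-\sum c_\ell\lambda^{s_\ell}}(H\uA)$ from containing classes invisible to your monomial description.

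There is also a smaller error in your endgame. Your proposed mechanism for (i) and (ii) via the gold relation is not right: Theorem~\ref{prop:au-rel} relates $u_{\xi_1}a_{\xi_2}$ to $u_{\xi_2}a_{\xi_1}$, it says nothing about $u_{\xi_1}u_{\xi_2}$. The correct route, if the generation statement were available, would be through the $a$-relations: inverting $a_{\lambda^{s_j}}$ forces $[G/H_j]=0$ in the localized ring, and since $\res^G_{H_\ell\cap H_m}([G/H_j])=p_j$ for $j\in\{\ell,m\}$, coprimality of $p_\ell,p_m$ kills the $A(H_\ell\cap H_m)$-module carrying $u_{\lambda^{s_\ell}}u_{\lambda^{s_m}}$, giving (i); and $\res^G_{H_\ell}([G/H_\ell])=p_\ell$ gives (ii). So even the sketch that survives needs rerouting.
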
 
\end{mysubsection}

\begin{mysubsection}{Computations over the positive cone} We write {\it the positive cone} for the sector of $RO(G)$ consisting of elements of the form $n-V$ where  $V$ is an actual representation of $G$. This is the sector where computations can be carried out using the Bredon cohomology method. The positive cone is also additively closed, so the part of $\pi_\bigstar^G(H\uA)$ or $\pi_\bigstar^G (H\uZ)$ restricted to the positive cone is a graded commutative ring. 
Observe that both the Euler classes and the orientation classes lie inside the positive cone. Our computations in this sector are inspired from the following theme for a cyclic group.
\[\mbox{\it The positive cone is generated by the Euler classes and the orientation classes.}\]

We write $\bigstar_\div$ for $\bigstar_{\pm \div} \cap \big(\mbox{\it positive cone}\big)$, which is a linear combination $ \ell -\epsilon \sigma-\sum_{d_i\mid n}~~b_i\mspace{2mu} \lambda^{d_i} $  with $ b_i \geq 0$ and $\epsilon \geq 0$. It suffices to verify the theme above by restricting the attention to $\bigstar_\div$. For the constant coefficients $\uZ$, the a-relations, and au-relations are given by 
\[\frac{n}{d}a_{\lambda^d} =0 \quad \textup{~and~} \quad 
		\frac{d}{\gcd(d,s)}a_{\lambda^s} u_{\lambda^d}=\frac{s}{\gcd(d,s)} a_{\lambda^d}u_{\lambda^s}.\]
There are no u-relations. If $n$ is even, $\sigma$ is a non-orientable representation which has the Euler class $a_\sigma$ satisfying $2 a_\sigma=0$, but no orientation class. For $\uZ$-coefficients we prove (see Theorem \ref{thm ring z ev} and Theorem \ref{hzthm})
\begin{thmb}
For a cyclic group $C_n$, $\pi_{\bigstar_\div}^{C_n}(H\uZ)$ is the polynomial ring on the Euler classes $a_{\lambda^d}$ (and $a_\sigma$ if $n$ is even), the orientation classes $u_{\lambda^d}$ modulo the ideal generated by the a-relations and the au-relations. 
\end{thmb}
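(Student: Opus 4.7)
The plan is to construct a graded ring map $\Phi$ from the presented polynomial ring (on the Euler and orientation classes, modulo the a- and au-relations) to $\pi_{\bigstar_\div}^{C_n}(H\uZ)$, and prove it is an isomorphism in three coordinated steps: (i) showing $\Phi$ is well-defined by verifying the generators satisfy the stated relations, (ii) establishing surjectivity, and (iii) establishing injectivity via a rank comparison in each degree.

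For (i), the a-relation $\frac{n}{d}a_{\lambda^d}=0$ is extracted from the cofibre sequence $S(\lambda^d)_+ \to S^0 \xrightarrow{a_{\lambda^d}} S^{\lambda^d}$ smashed with $H\uZ$: since $C_n$ acts on $S(\lambda^d)$ via the free quotient $C_n/C_d$ and $\uZ$ is the constant Mackey functor, the relevant connecting map $\pi_0^{C_n}(S(\lambda^d)_+\wedge H\uZ)\cong\Z \to \pi_0^{C_n}(H\uZ)\cong\Z$ is multiplication by the index $n/d$. The analogue $2a_\sigma=0$ follows for the sign representation. The au-relations $\frac{d}{\gcd(d,s)}a_{\lambda^s}u_{\lambda^d}=\frac{s}{\gcd(d,s)}a_{\lambda^d}u_{\lambda^s}$ can be checked after restriction to $C_{\gcd(d,s)}$, where both Euler classes vanish and the orientation classes restrict to generators; the multiplicities $d/\gcd(d,s)$ and $s/\gcd(d,s)$ then appear via the transfer-restriction formulas for $\uZ$ and the degrees of $\lambda^d,\lambda^s$ on this common kernel.

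For (ii), I would induct on $V=\epsilon\sigma+\sum b_i\lambda^{d_i}$, ordered by $\dim V$. Choosing an index $i$ with $b_i>0$ and applying the cofibre sequence method with $\bigstar=n-V$ yields
\[\cdots\to\pi_{\bigstar+1}^{C_n}(S(\lambda^{d_i})_+\wedge H\uZ)\to\pi_\bigstar^{C_n}(H\uZ)\xrightarrow{\cdot a_{\lambda^{d_i}}}\pi_{\bigstar-\lambda^{d_i}}^{C_n}(H\uZ)\to\cdots.\]
The right-hand group is polynomial in the generators by the induction hypothesis applied to $V-\lambda^{d_i}$, while the left-hand group is computed via change of groups to $C_{d_i}$, and its classes lift to expressions involving $u_{\lambda^{d_i}}$ together with classes from the smaller subgroup. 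A parallel argument with $a_\sigma$ handles the $\epsilon=1$ case. Iterating exhibits every class in $\pi_{\bigstar_\div}^{C_n}(H\uZ)$ as a polynomial in the claimed generators.

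For (iii), invoke the Bredon cohomology identification $\pi_{n-V}^{C_n}(H\uZ)\cong H^n_{C_n}(S^V;\uZ)$ and compute the right-hand side from the standard $C_n$-CW decomposition of $S^V$, built by smashing together the single-cell structures on each $S^{\lambda^{d_i}}$ (whose non-fixed cells are free $C_n/C_{d_i}$-cells in dimensions $1$ and $2$) and on $S^\sigma$. The cellular cochain complex with coefficients in $\uZ$ has tractable differentials given by indices of the form $n/d_i$, and a direct computation produces the additive rank of $\pi_{n-V}^{C_n}(H\uZ)$ in every grading. Matching this Poincar\'e series against that of the polynomial ring modulo the a- and au-relations completes the argument. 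The main obstacle is precisely this rank comparison: the CW differentials mix contributions from several stabilizers $C_{d_i}$ simultaneously, and one must verify that the overlaps at common kernels $C_{\gcd(d_i,d_j)}$ produce exactly the au-relations with no additional relations. This requires careful combinatorial bookkeeping on the multi-indexed cellular chains, and it is the most delicate part of the proof.
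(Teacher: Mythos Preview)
Your outline is broadly correct and shares the paper's architecture (define a ring map, prove surjectivity via cofibre sequences, then match orders), but there are two places where the paper proceeds differently and more efficiently.

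First, your verification of the au-relation in step~(i) has a gap: restricting to $C_{\gcd(d,s)}$ kills both sides of the relation, so it cannot detect whether the relation holds before restriction. The paper (Theorem~\ref{prop:au-rel}) instead proves the au-relation by an explicit cellular chain computation on $S^{\xi_1+\xi_2}$, exhibiting the difference of the two sides as a boundary.

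Second, and more significantly, your step~(iii) proposes a direct Bredon computation for general $n$ via the smashed CW structure. The paper avoids this combinatorial morass by two reductions. It first proves that the odd-degree groups vanish: for prime powers this falls out of the cellular complex, and for composite $n$ one uses that if $p,q$ are distinct primes dividing $n$ then $\tr^G_{C_{n/p}}\res^G_{C_{n/p}}(x)=px=0$ and similarly $qx=0$, forcing $x=0$. This vanishing turns the long exact sequences into short exact sequences, which you implicitly need but do not establish. Then, for the order comparison, the paper reduces to the $p$-Sylow subgroup $H$: since $\tr^G_H\res^G_H=[G:H]$ is coprime to $p$, the $p$-torsion of $\pi^G_{\hm-V}(H\uZ)$ is detected in $\pi^H_{\hm-V}(H\uZ)$. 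For prime-power groups the subgroup lattice is totally ordered, so the cellular chain complex has the simple staircase form of \eqref{celldecom}, and the computation is straightforward (Theorem~\ref{thm Z cpm}). Your direct approach would work in principle, but the bookkeeping you flag as ``the most delicate part'' is precisely what the paper's Sylow reduction sidesteps.
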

Theorem B is used in \cite{BDK23} in the computation of equivariant cohomology of complex projective spaces for cyclic groups. This result confirms for us that there are no further relations among Euler classes and orientation classes in this case. We also point out that this is special for cyclic groups, as even for elementary Abelian groups with $\F_p$ coefficients we have additional relations (Remark \ref{elemabrel}). We also compute the positive cone for $\uA$-coefficients but have to assume that either $n=$ a prime power or $n$ is square free (see Theorem \ref{thm A cpm}, Theorem \ref{thm A cpm div} and Theorem \ref{Thm: Ring structure main Thm})
\begin{thmc}
Suppose that $n$ is either  a prime power or square free. The ring $\pi_{\bigstar_{\div}}^{C_n}H\uA$ is generated by Euler classes and orientation classes, modulo the ideal generated by the a-relations, u-relations, and the au-relations.
\end{thmc}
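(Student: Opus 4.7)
The plan is to construct a ring map from the candidate algebra onto $\pi_{\bigstar_\div}^{C_n} H\uA$ and then to verify injectivity graded piece by graded piece against an independently computed additive description. Let $R$ denote the graded-commutative $A(C_n)$-algebra generated by the Euler classes $a_{\lambda^d}$ (and $a_\sigma$ when $n$ is even) together with the orientation-class generators $u_{\lambda^d}^{(\ker(\lambda^d))}$ for $d\mid n$, modulo the ideal generated by the a-relations of \eqref{eq:au general}, the u-relations of Proposition \ref{prop:u-rel}, and the au-relations of Theorem \ref{prop:au-rel}. Because all of these relations are known to hold in $\pi_{\bigstar_\div}^{C_n}H\uA$ from the construction of the classes, one obtains a well-defined graded ring homomorphism $\phi:R\to\pi_{\bigstar_\div}^{C_n}H\uA$.

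For surjectivity I would use the Bredon cohomology method. Given $\alpha=\ell-\epsilon\sigma-\sum b_i\lambda^{d_i}\in\bigstar_\div$ with $b_i,\epsilon\geq 0$, write $V(\alpha)=\epsilon\sigma+\sum b_i\lambda^{d_i}$ and equip $S^{V(\alpha)}$ with a $C_n$-CW structure that refines the joint filtration by the subspheres $S(\lambda^{d_i})$ and $S(\sigma)$. The associated cellular Mackey-functor cochain complex has generators in each bidegree which factor geometrically as an Euler class inclusion onto a fixed subsphere followed by an orientation class on the complementary orientable subspace; this exactly matches a monomial in the $a$'s and $u$'s under $\phi$. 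To organise this inductively, iterate the Gysin cofibre sequence $S(\lambda^{d_i})_+\to S^0\xrightarrow{a_{\lambda^{d_i}}}S^{\lambda^{d_i}}$: a class in $\pi_\alpha H\uA$ either maps nontrivially through an $a_{\lambda^{d_i}}$-multiplication (expressing it as such a multiple) or descends to a class over a proper subgroup, where the inductive hypothesis on $|C_n|$ together with the u-relations supplies a preimage.

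For injectivity the case distinction matters. When $n=p^k$, I would proceed by induction on $k$ via the Tate diagram smashed with $H\uA$: the term ${E\PP}_+\wedge H\uA$ reduces to $C_{p^{k-1}}$-computations and the term $\widetilde{E\PP}\wedge H\uA$ is controlled by Theorem A. A grading-by-grading rank count against $R$ then forces $\ker\phi=0$. For $n$ square free, every proper subgroup is again cyclic of square-free order of smaller length, so one iterates the Tate splitting one prime factor at a time, comparing with the $C_p$ computation of \cite{Lew88} and the $C_{pq}$ decomposition of \cite{BG19} at the base.

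The main obstacle is the rank matching in the injectivity step: one has to show that the $A(C_n)$-module rank of $R$ in grading $\alpha$ equals that of $\upi_\alpha(H\uA)$. This demands a careful analysis of how mixed products $u_{\lambda^{d_1}}^{(\ker)} \cdot u_{\lambda^{d_2}}^{(\ker)}$ and their interactions with transfers from various intermediate subgroups are absorbed by the u-relations and au-relations. The hypothesis that $n$ is either a prime power or square free is precisely what prevents the Burnside-ring restriction lattice from acquiring Möbius-type degeneracies that would introduce further, genuinely new relations among the Euler and orientation generators; a direct attack on the general case founders on this point, which is why the theorem is stated with these two arithmetic restrictions.
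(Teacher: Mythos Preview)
Your overall shape—build a candidate ring $R$, map it to $\pi_{\bigstar_\div}^{C_n}H\uA$, prove surjectivity by Gysin/cofibre-sequence induction, then match ranks for injectivity—is reasonable, and your surjectivity sketch in the prime power case is essentially what the paper does. However, your injectivity strategy diverges substantially from the paper's actual arguments in both cases, and in each instance the paper's route is more concrete than what you outline.

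In the prime power case, the paper does \emph{not} separate surjectivity and injectivity into different arguments, and in particular does not invoke the Tate diagram or Theorem~A for the rank match. Instead, the single Gysin long exact sequence \eqref{eq:les for Cp^n case} does all the work: with $\lambda_r\in V$ of smallest stabilizer and $W=V-\lambda_r$, the map $\psi_{|W|}\colon A(C_{p^r})\to A(C_{p^l})$ is identified explicitly as $\res^{\G}_{C_{p^l}}\tr^{\G}_{C_{p^r}}(1)=p^{m-l}[C_{p^l}/C_{p^r}]$, hence injective. This simultaneously shows that $a_{\lambda_r}$ is surjective in the relevant degree, that the odd groups vanish, and that $\mathrm{Im}(\psi_{|W|})$ consists exactly of products of $u$-classes with elements of $I_{\tr^G_{\ker(\lambda_r)}}$—so the au-relation \eqref{eq:au rel general} is enough to rewrite every element, and no further relations appear. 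Your proposed Tate-diagram rank count would require computing $\pi_{\bigstar_\div}(\widetilde{E\PP}\wedge H\uA)$ over all of $\bigstar_\div$, not just the integer grading that Theorem~A supplies, and you do not indicate how to extract this.

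In the square free case the divergence is sharper. The paper does not run a Tate argument at all; instead it bootstraps from the already-known $\uZ$-coefficient Theorem~B via the string of short exact sequences of Mackey functors
\[
0\to \bZ_{p_i}\boxtimes\uA_{I\setminus\{i\}}\boxtimes\uZ_{I^c}\to \uA_I\boxtimes\uZ_{I^c}\to \uA_{I\setminus\{i\}}\boxtimes\uZ_{I^c\cup\{i\}}\to 0,
\]
inducting on $|I|$ and showing that $\RR_n\otimes_{A(G)}A(C_I)\to\pi_{\bigstar_\div}^G(H(\uA_I\boxtimes\uZ_{I^c}))$ is an isomorphism at each stage. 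The crucial point is that the kernel at the $i$-th step is the principal ideal $(p_i-\alpha_{p_i})$, and a splitting map $\varphi$ is constructed using the unit $\uA_{p_i}\to\bZ_{p_i}$. This is the ``trick from \cite{BG20}'' alluded to in the introduction, and it is what makes the square free case go through without the rank-matching obstacle you flag. Your proposal to ``iterate the Tate splitting one prime factor at a time'' would not obviously produce this $A(G)$-module comparison, and you correctly identify the rank match as the main obstacle without offering a way around it—the paper's $\boxtimes$-filtration is precisely that way around.
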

The proof of Theorem C goes differently for the prime power case, and the square free case. In the latter case, we use the trick employed in \cite{BG20} which allows us to move from $\uZ$-coefficients to $\uA$-coefficients by a string of short exact sequences of Mackey functors.
\end{mysubsection}

\begin{mysubsection}{Computations in non-negative grading}
The final part of the computation is carried out for non-negative gradings. This is the part of $RO(G)$ consisting of $\alpha$ which are $\leq 0$ which means that $\dim(\alpha^H)\leq 0$ for all subgroups $H$. In this case, one may use the fact that the maps $S^0 \to H\uA \to H\uZ$ are all $0$-equivalences to note that 
\[ \upi_\alpha(S^0) \cong \upi_\alpha H\uA \mbox{ for } \alpha \leq 0,\]
\[ \upi_\alpha(S^0) \to \upi_\alpha H\uZ \mbox{ is surjective for } \alpha \leq 0.\]
We also have that 
the fixed point map 
	$$
	\mathcal{F}^G: \pi_\alpha^G(S^0)\to C(G), \quad  \mathcal{F}^G(\nu)(H)= \deg (\nu^H).
	$$
	is injective for $\alpha\in \bigstar_{\le 0}$ \cite[Proposition 1]{tDP78}. This gives us enough ingredients to proceed towards computations for  $\bigstar_{\pm \textup{div},\le 0}=\bigstar_{ \pm \textup{div}}\cap \bigstar_{\le 0}$. However, we soon discover that this sector becomes notational very complicated for $\uA$ coefficients, if we try to express the answer in terms of generators and relations. The reason is that the Burnside ring for subgroups, and invertible modules over them enter into the notation when we try to write down multiplicative formulas. So we restrict our attention to $\uZ$ coefficients in this sector. 

The Euler classes belong to non-negative gradings, and for $\bigstar_{\div,\leq 0}= \bigstar_{ \textup{div}}\cap \bigstar_{\le 0}$, from the previous computations, we deduce that
\[\pi_{\bigstar_{\div,\leq 0}}^{C_n}(H\uZ) = \begin{cases} \Z[a_{\lambda^d}, d\mid n]/(\frac{n}{d}a_{\lambda^d}) &\mbox{if } n \mbox{ is odd} \\ \Z[a_{\lambda^d}, d\mid n, a_\sigma]/(\frac{n}{d}a_{\lambda^d}, 2 a_\sigma) & \mbox{if } n \mbox{ is even.} \end{cases}\]
In the square free case, we have a full calculation of $\pi_{\bigstar_{\pm \div,\leq 0}}^{C_n}(H\uZ)$ in \cite{BG20},  the group is calculated for $\alpha\in \bigstar_{\leq 0}$ as
\[ \pi_\alpha (H\uZ) = \begin{cases} \Z & \mbox{if } |\alpha|=0\\ \Z/(m(\alpha)) &\mbox{if } |\alpha|<0, \end{cases}\]
for a function $m(\alpha)$. The $|\alpha|<0$ part of the expression above lies in the image of $a_\lambda$-multiplication. The torsion-free part is expressed as a subring of $\Z[u_{\lambda^d}^{\pm 1} \mid  d\mid n]$ which in degree $\alpha$ is $\frac{n}{m(\alpha)}$ times the generator. This demonstrates the difficulty in expressing this sector in terms of generators and relations. Nevertheless, restricting our attention to $n=$ prime power, we have a description of the ring structure in Theorem \ref{neodd} and Theorem \ref{negen}. We have classes 
\[ a_{\lambda_i/\lambda_{i-1}} : S^{\lambda_{i-1}}\to S^{\lambda_i}, z\mapsto z^p,\]
which via products and  transfers produce classes $\chi_{\alpha,s}\in \pi_\alpha^{C_{p^m}}H\uZ$ for $0\leq s \leq m$ and $|\alpha|=0$. These classes satisfy a bunch of relations (see \S \ref{secneg} for more details) denoted by the collection $\rho$. The ring structure is described using all these generators and relations.
\begin{thmd}
The ring $\pi_{\bigstar_{\leq 0}}^{C_{p^m}}$ for $p$ odd is the ring generated by $a_{\lambda_0}$, $\chi_{\alpha,s}$ for $|\alpha|=0$, $0\leq s\leq m$, modulo the relations $\rho$ defined between them. For $p=2$, there is an additional generator $a_\sigma$, with a similar collection of relations.  
\end{thmd}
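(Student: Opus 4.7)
The strategy combines the additive computation of $\pi_\alpha^{C_{p^m}}(H\uZ)$ on the sector $\bigstar_{\leq 0}$ (from Theorems~\ref{neodd} and~\ref{negen}) with an explicit identification of the multiplicative generators and a verification that the relations $\rho$ are both satisfied and sufficient. Throughout, the essential ingredient is the injectivity of the fixed point map $\mathcal{F}^{C_{p^m}}: \pi_\alpha^{C_{p^m}}(S^0)\to C(C_{p^m})$ for $\alpha\in \bigstar_{\leq 0}$ \cite[Proposition 1]{tDP78}, composed with the surjection $\upi_\alpha H\uA \twoheadrightarrow \upi_\alpha H\uZ$ coming from the fact that $S^0\to H\uA\to H\uZ$ is a $0$-equivalence.

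First, I would establish additive generation. For $\alpha$ with $|\alpha|=0$ (so that degrees of fixed points are defined integers), the $\chi_{\alpha,s}$ are constructed as transfers of products of the maps $a_{\lambda_i/\lambda_{i-1}}\colon S^{\lambda_{i-1}}\to S^{\lambda_i}$, $z\mapsto z^p$, so that their fixed point degrees span the image of $\mathcal{F}^{C_{p^m}}$; injectivity of the fixed point map then forces these classes to exhaust $\pi_\alpha^{C_{p^m}}(H\uZ)$. For $\alpha$ with $|\alpha|<0$, multiplication by $a_{\lambda_0}$ (respectively $a_\sigma$, when $p=2$) pushes classes from higher $|\alpha|$-strata down into the negative-dimension strata; combined with the divisor-graded description of Theorem~B on $\bigstar_{\div,\leq 0}$, this shows that $a_{\lambda_0}$, $a_\sigma$ (if $p=2$), and the $\chi_{\alpha,s}$ generate every additive group in $\bigstar_{\leq 0}$.

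Next, I would verify the relations $\rho$. These are of three kinds. The Euler-class relations $p^{m-i}a_{\lambda^{p^i}}=0$, and, for $p=2$, $2a_\sigma=0$, are inherited from Theorem~B. Products $a_{\lambda^{p^i}}\cdot \chi_{\alpha,s}$ and $\chi_{\alpha,s}\cdot \chi_{\beta,t}$ are then computed via Frobenius reciprocity $\tr^G_H(x)\cdot y=\tr^G_H(x\cdot \res^G_H y)$ applied to the transfers defining the $\chi$ classes; each such identity reduces the product to a single transfer on a proper subgroup, computable by induction on the depth $s$ (and by a double coset decomposition in the $\chi\cdot\chi$ case). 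This produces exactly the relations listed in $\rho$.

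Finally, for sufficiency: given a formal polynomial in the generators mapping to zero in $\pi_\alpha^{C_{p^m}}(H\uZ)$, an induction on the $a_{\lambda_0}$-adic filtration reduces to the case $|\alpha|=0$, where injectivity of $\mathcal{F}^{C_{p^m}}$ and the known additive structure force the polynomial into the ideal generated by $\rho$. The main obstacle will be the combinatorial bookkeeping of $\rho$: the $\chi_{\alpha,s}$ are indexed by a two-parameter family, so the product rules depend delicately on the arithmetic of $\alpha$ and the transfer depth $s$. Organizing these so that verification and sufficiency proceed uniformly, and, for $p=2$, handling the additional interactions involving $a_\sigma$ (which is non-orientable and has no $u$-class companion), is the principal technical effort.
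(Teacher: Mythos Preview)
Your proposal has a circularity: Theorem~D is the introduction's summary of Theorems~\ref{neodd} and~\ref{negen}, so citing those theorems as providing ``the additive computation'' assumes exactly what is to be proved. The paper has no prior additive result on $\bigstar_{\le 0}$; the ring isomorphism with $\mathcal{R}_{\le 0}$ (resp.\ $\hat{\mathcal{R}}_{\le 0}$) \emph{is} the additive computation.

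Setting that aside, your actual mechanism for generation also has a gap. Injectivity of $\mathcal{F}^G$ on $\pi_\alpha^G(S^0)$ does not tell you that the $\chi_{\alpha,s}$ generate: for that you would need to know the \emph{image} of $\mathcal{F}^G$ (the tom~Dieck congruences) and verify that the lattice spanned by the $\mathcal{F}^G(\chi_{\alpha,s})$ equals it. You never address this, and for $H\uZ$ there is the further issue that $\mathcal{F}^G$ is defined on $\pi^G_\alpha(S^0)\cong\pi^G_\alpha(H\uA)$, so one must also control the kernel of $\pi^G_\alpha(H\uA)\to\pi^G_\alpha(H\uZ)$.

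The paper avoids all of this by the cofibre sequence method. The fixed point map is used only to \emph{derive} the relations~\eqref{rel11} and~\eqref{rel9}; the actual isomorphism $\mathcal{R}_{\le 0}\cong \pi^G_{\bigstar_{\le 0}}(H\uZ)$ is proved by a double induction (on $|G|$ and on the smallest index $s$ with $c_s>0$ in $\alpha=\sum c_i\lambda_i+\ell$) using the long exact sequence for $S(\lambda_s)_+\to S^0\to S^{\lambda_s}$. The work goes into three concrete claims: (A) multiplication by $a_{\lambda_0}\chi_{\lambda_0-\lambda_s,m}$ surjects $\mathcal{R}_{\le 0,\alpha}\to\mathcal{R}_{\le 0,\alpha-\lambda_s}$; (B) its kernel is either $0$ or a single cyclic summand, identified explicitly; (C) $\tr^G_{H_s}$ is injective on $\pi^{H_s}_\alpha(H\uZ)$. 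A five-lemma comparison with the cofibre long exact sequence then gives the isomorphism. Your ``$a_{\lambda_0}$-adic filtration'' idea gestures at this, but the substance is precisely the kernel computation (B) and the transfer injectivity (C), neither of which your outline supplies.
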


\end{mysubsection}

	\begin{notation} We use the following notations
		\begin{itemize}
			\item Throughout this paper,  a general finite group is denoted by $ \GG$. We write  $ G=C_n $ for the cyclic group of order  $ n $, and $ g\in G $ denotes a fixed generator of $ G $. We use $ \G $ when $ n=p^k $ for some prime $ p $.
			\item  We often omit the superscript in $ \pi^G_V(H\uZ) $ and simply write $ \pi_V(H\uZ) $ when the group $ G $ is understood.
			\item  The irreducible complex $ 1 $-dimensional  representations are $1,\lambda, \lambda^2,\cdots, \lambda^{n-1}  $, where $ \lambda  $ acts via $ n ^{th}$ root of unity.  We use the same notation for its realization. Note as a real representation $\lambda^i$ and $\lambda^{n-i}$ are isomorphic. For $ d\mid n $, the kernel of the representation $ \lambda^d $ is $ C_d \le G$. If $n$ is even, the sign representation $\sigma$ satisfies $2\sigma = \lambda^{n/2}$. 
			\item  A divisor  $ d $ of $ n $ is said to be  proper  if $ d\ne n $.  
			\item 	The linear combination $ \ell -\epsilon \sigma-\sum_{d_i\mid n}~~b_i\mspace{2mu} \lambda^{d_i} $  with $ b_i, \ell, \in \Z, $ and $ \epsilon\in \{0,1\} $ is denoted by $ \bigstar_{\pm\textup{div}} $.  When  all the $ b_i $'s are  positive, the resultant is denoted by $ \bigstar_{\textup{div}} $.
			The gradings of  $ \bigstar_{\textup{div}} $,  which does not contain  $ \sigma $, i.e., $ \epsilon=0 $ is called $ \bigstar^e $. We also use the same notation $ \bigstar_\div  $ for cohomological grading, that is when graded cohomologically, $ \bigstar_\div  $ consists of linear combinations of the form $ \sum_{d_i\mid n}~~b_i\mspace{2mu} \lambda^{d_i}+\epsilon \sigma-\ell $  with $ b_i{\ge 0}, \ell \in \Z, $ and $ \epsilon\in \{0,1\} $. 
			We use $ \bigstar_{\le 0} $ when all the fixed point dimensions are $ \le 0 $. Let $ \bigstar^e_{\le 0}$ consist of those elements of  ${\bigstar_{\le 0}}$ which does not involve $\sigma$, i.e, of the form $\sum_{d_i\mid n}b_i\lambda^{d_i}+r $.
			Further, $\bigstar_{\pm\textup{div},\le 0}  $ means the intersection of $ \bigstar_{\pm \textup{div}}$ and $ \bigstar_{\le 0}$.  Similarly, $\bigstar_{\textup{div},\le 0}=\bigstar_{ \textup{div}}\cap \bigstar_{\le 0}$.  
		\end{itemize}
	\end{notation}

\begin{mysubsection}{Organization}
In \S\ref{symmono} and \S \ref{invertiblemod}, we write down a formula for the box product of Mackey functors for cyclic groups, and invertible modules over them. This leads to a computation over the gradings in $RO_0(C_n)$. In \S \ref{lifting}, we lift the orientation classes to $\uA$ coefficients and discuss computations for the geometric fixed points. Some computations in $C_p$ and $C_{pq}$ cases with $\uA$-coefficeints are carried out in \S \ref{ringstrp}. \S \ref{secrepA} and \S \ref{secpositiveHZ} are devoted to computing the positive cone for the Burnside ring and the constant coefficients respectively. In \S \ref{secneg}, we discuss the computations in non-negative gradings.  
\end{mysubsection}

\begin{ack}
The first author would like to thank Surojit Ghosh for some helpful conversations in various phases of this paper. The research of the second author was supported by the NBHM grant no. 16(21)/2020/11. 
\end{ack}	

	\section{The symmetric monoidal category of  $ G $-Mackey functors}\label{symmono}
	The category of Mackey functors \cite{Dre73} has a symmetric monoidal structure denoted by the box product $\Box$ \cite{Lew81}. For $(-1)$-connected $G$-spectra $E$ and $F$, we have the formula  from \cite[1.3]{LM06}
\begin{myeq}\label{smasbox}
	\underline{\pi}_0^G(E\smas F)\cong \underline{\pi}_0^G(E) \Box \underline{\pi}_0^G( F).
\end{myeq}
We recall the definition of the box product and write down an explicit formula in the case of cyclic groups. If the order is square free, there is a further simplification using a tensor product construction.\par
For finite $G$-sets $X, Y$, let $C(X,Y)$ be the isomorphism classes of diagrams of the form 
 $$\xymatrix@R-=.25cm@C-=.25cm{ &  &L \ar[dl] \ar[dr]  \\ &X  &  &Y}$$ 
 where  $X\leftarrow L\rightarrow Y $ and $X
\leftarrow L' \rightarrow Y$ are said to be isomorphic if the following commutes
 $$\xymatrix@R-=.25cm@C-=.25cm{&  &L\ar[dl] \ar[dd]^{\cong} \ar[dr]  \\ &X  &  &Y \\& &L' \ar[ul] \ar[ur]}$$
Under disjoint union $C(X,Y)$ becomes a commutative monoid.
The \textit{Burnside category} $B_G$ is defined as the category in which objects are finite $G$-sets and the morphisms between two $G$-sets,  $X$ and $Y$ is the group completion of $C(X,Y)$.
 A $G$-Mackey functor is a contravariant additive functor from the Burnside category $\mathcal{B}_G$ to the category of Abelian groups, $\mathcal{A}b$. Following \cite{Dre73} we write an explicit definition of Mackey functors for $G=C_n$.
 \begin{defn}
 	A $G$-Mackey functor $\uM$ assigns  a commutative $G/H$-group $\uM(G/H)$ for each subgroup $H\le G$, and constitutes maps called  transfer $\tr^H_K : \uM(G/K) \to  \uM(G/H)$ and restriction $\res^H_K : \uM(G/H) \to  \uM(G/K)$ for $K\le H\le G$ so that
 	\begin{enumerate}
 		\item  $\tr^H_L= \tr^H_K \tr^K_L $ and $\res^H_L= \res^K_L\res^H_K$ for all $L\le K \le H$.
 		\item  $\tr^H_K(\gamma\cdot x) = \tr^H_K(x)$ for all $x \in  \uM(G/K)$ and  $\gamma\in  H/K$.
 		\item $\gamma\cdot  \res^H_K(x) = \res^H_K(x)$ for all $x \in \uM(G/H)$ and  $\gamma\in H/K$.
 		\item $\res^H_K \tr^H_L(x)=\sum_{\gamma\in H/LK}\gamma\cdot \tr^K_{L\cap K} \res^L_{L\cap K}(x)$ for all subgroups $L,K\le H$.
 	\end{enumerate}
 \end{defn}
\begin{exam}
	The \textit{Burnside ring Mackey functor} $ \uA $ is defined by $ \uA(G/H)=A(H) $, the Burnside ring of $ H $ which is the group completion of the free Abelian monoid of isomorphism classes of finite $ H $-sets under disjoint union. Thus the classes $ [H/K] $,  $ K\le H $ form a basis of $ A(H) $. The action of $ G/H $ on $ A(H) $ is trivial.  The transfer maps, $ \tr_K^H $ are defined by inducing up a $ K $-set $ S $ to an $ H $-set : $S\mapsto H\times_K S$ for $ K\le H $. This is the representable functor defined as $B_G(-,G/G)$. We write $ \mu_n=[G/G] $ and $ \mu_d=[C_d/C_d] $ for $ d|n $. All  restriction  maps are determined from $\res^G_{C_d}(\mu_n)=\mu_d  $.
	Let $n=p_1^{n_1}\cdots p_k^{n_k}$. Then 
	\begin{equation*}\label{rankah}
		\textup{~rank}(A(G))=\Z^\ell \quad \textup{~where~} \ell=(n_1+1)\cdots(n_k+1).
	\end{equation*}
\end{exam}
\begin{exam}
	The constant Mackey functor $  \underline{\mathbb{Z}} $ is described by 
	\[
	\uZ(G/H)=\Z, \quad \res^H_K=\textup{Id}, \quad \tr^H_K=[H:K].
	\]
The Mackey functor $\uZ^\ast$ is described by 
	\[
	\uZ^\ast(G/H)=\Z, \quad \tr^H_K=\textup{Id}, \quad \res^H_K=[H:K].
	\]
\end{exam}
\begin{exam}
The Mackey functor $\langle  \mathbb{Z} \rangle$ is defined by 
\[
\langle  \mathbb{Z} \rangle(G/H)=\Z \textup{~if~} H=G, \textup{~and~} 0 \textup{~if~} H\ne G.
\]
\end{exam}
The box product  $\uM\Box \uN$ of  $G$-Mackey functors  is defined as the left Kan extension 
\[
\begin{tikzcd}
	\mathcal{B}^\textup{op}_G\times \mathcal{B}^\textup{op}_G \arrow[rr, "\uM\otimes\uN"] \arrow[d, "\mu"'] &  & \mathcal{A}b \\
	\mathcal{B}^\textup{op}_G \arrow[rru, "\uM\Box\uN"']                   &  &   
\end{tikzcd}
\]
where $\mu$ denote the cartesian product of two $G$-sets and $ (\uM\otimes\uN)(X,Y)= \uM(X)\otimes \uN(Y)$. 
The universal property for the box product is described as:   a map $\uM\Box \uN\to \underline{L}$ is equivalent to specifying maps $\uM(X)\otimes\uN(Y)\to \underline{L}(X\times Y)$ for all  $X,Y\in \mathcal{B}_G^{\textup{op}}$, satisfying suitable conditions \cite{Lew81} under the restriction and transfer maps.. 

  	 An explicit formulation of the box product can be found in Lewis \cite{Lew88} for $G=C_p$ and in \cite{HM19} for $G=C_{p^n}$.
	\begin{defn}\label{defmf}
		Let $\uM, \uN\in \MM_G$. The box product  $\uM\Box \uN$ is  inductively defined as follows. Let $C_d\le G$.  For $p\mid d$, let   $\pi_p$ is the quotient map 
		$$
		\pi_p: (\uM\Box \uN)(G/C_{d/p}) \to (\uM\Box \uN)(G/C_{d/p})/ (W_{C_d}(C_{d/p})).
		$$
		Define  $\tau (M,N)(G/C_d)$ to be  the co-equalizer of 
		\[
		\begin{tikzcd}
			\bigoplus_{p,q\mid d} (\uM\Box \uN)(G/C_{d/pq}) \ar[rr,shift left=.75ex,"\pi_p\,\circ\, \tr^{C_{d/p}}_{C_{d/pq}}"]
			\ar[rr,shift right=.75ex,swap,"\pi_q\,\circ\, \tr^{C_{d/q}}_{C_{d/pq}}"]
			&&
			\bigoplus_{p\mid d}\{(\uM\Box \uN)(G/C_{d/p})/ (W_{C_d}(C_{d/p}))\}.
		\end{tikzcd}
		\]
		Then
		\begin{equation*}\label{eqbox}
			(\mbn)(G/C_d):=\Big[M(G/C_d)\otimes N(G/C_d)\oplus\tau(M,N)(G/C_{d}) \Big]/FR.
		\end{equation*}		
		The Frobenius relations are given by: 
		\begin{align*}
			x\otimes \tr_{C_{d/p}}^{C_d}(y)&=\res_{C_{d/p}}^{C_d}(x)\otimes y\quad \textup{~where~} x\in M(G/C_d), y\in N(G/C_{d/p})\\
			\tr_{C_{d/p}}^{C_d}(w)\otimes z&=w\otimes \res_{C_{d/p}}^{C_d}(z)\quad\textup{~where~} w\in M(G/C_{d/p}), z\in N(G/C_d).	
		\end{align*}
		i) 	For $p\mid d$, 
		$$
		(\uM\Box \uN)(G/C_{d/p})\to \tau(M,N)(G/C_d)\to (\uM\Box \uN)(G/C_d)
		$$
		is the transfer map.\\
		ii) 	The restriction map  is given by 
		$$
		\res^{C_d}_{C_\ell}(m\otimes n)=\res_{C_\ell}^{C_d}(m)\otm \res_{C_\ell}^{C_d}(n) \quad\textup{~ if~} m\otm n\in M(G/C_d)\otimes N(G/C_d),
		$$
		and  for $x\in (\uM\Box \uN)(G/{C_{d/p}})/(W_{C_d}(C_{d/p}))$ pick $\tilde{x}\in \underline{M}\Box\underline{N} (G/C_{d/p}) $ such that $\pi_p(\tilde{x})=x $, and  define 
		$$
		 \res^{C_d}_{C_\ell}(x)=\begin{cases}
		 	\res^{C_{d/p}}_{C_\ell}(\sum_{\gamma\in C_d/C_{d/p}} \gamma \cdot \tilde{x})\quad \textup{~if~} \ell\mid d/p\\
		 	\tr^{C_\ell}_{C_{d/p}\cap C_\ell} \res^{C_{d/p}}_{C_{d/p}\cap C_\ell}\tilde{x} \quad \textup{~if ~} \ell\nmid d/p.
		 \end{cases}
		$$
		iii) The Weyl group action on $ (\mbn)(G/C_d) $ is given by 
		$$
		\gamma\cdot(a\otm b)=\gamma \cdot a\otm \gamma \cdot b, {\;\rm where\;}  \gamma\in W_G(C_d)
		$$ 
		for $a\otimes b\in M(G/C_d)\otimes N(G/C_d)$, and on the other factors it is the induced action on the quotient group.  
	\end{defn} 
	This recovers the definition of box product for the group $G=C_{p^n}$ \cite{HM19}. One easily verifies that the formula in Definition \ref{defmf} satisfies the universal property. This is because each subgroup $C_d$ of $G=C_n$ has maximal proper subgroups given by $C_{d/p}$ for $p\mid d$, and hence it suffices to consider only the Frobenius relations arising from these subgroups.  \par
	For $G=C_n$ where $n$ is square free, we have a $\boxtimes$-construction for Mackey functors \cite{BG20}. For a divisor $d$ of $n$, note that any transitive $G$-set $G/H$ may be written as $C_d/H_1\times C_{n/d}/H_2$  where $H_1=C_d\cap H$ and $H_2=C_{n/d}\cap H$.
	\begin{defn}
	Let $d\mid n$, and  $\uM\in \MM_{C_d}$ and $\uN\in \MM_{C_n/d}$. Define $\uM\boxtimes \uN\in \MM_G$ as
	\[
	\uM\boxtimes \uN(G/H)=\uM(C_d/H_1)\otm \uN(C_{n/d}/H_2).
	\]
	For $K\le H$, $\res^H_K=\res^{H_1}_{K_1}\otimes\res^{H_2}_{K_2}$ and $\tr^H_K=\tr^{H_1}_{K_1}\otimes\tr^{H_2}_{K_2}$.
	The construction $\uM\boxtimes \uN$ may also be viewed as the left Kan extension of the tensor product along the functor $m$, 
	\begin{align*}
					m: \mcal{B}^\textup{op}_{C_{d}}\times \mcal{B}^\textup{op}_{C_{n/d}} & \longrightarrow \mcal{B}^\textup{op}_{G} \\
					(X,Y) & \longmapsto X\times Y.
				\end{align*}
	\end{defn}

\begin{exam}
One easily computes that $\uZ_d\boxtimes \uZ_{n/d}\cong \Z$ and $\uA_d\boxtimes \uA_{n/d}\cong \uA$, where $\uA_d$ (respectively $\uZ_d$) denotes the Burnside ring Mackey functor (respectively, the constant Mackey functor) for the group $C_d$ ($d\mid n$ and $n$ is square free). Writing a square free  $n=p_1\cdots p_k$, the subgroups of $C_n$ are indexed by subsets $I$ of $ \uk=\{1,\cdots, k\}  $. We define Mackey functors $\uA_I\boxtimes \uZ_{I^c}$, writing $ I^c = \uk\setminus I $, $|I|=\prod_{i\in I}p_i$, and $\uA_I=\uA_{C_{|I|}}$.  These Mackey functors fit into a string of short exact sequences for $j\in I$
\[ 0 \to \bZ_{p_j} \boxtimes \uA_{I\setminus\{j\}} \boxtimes \uZ_{I^c} \to \uA_I \boxtimes \uZ_{I^c} \to \uA_{I\setminus \{j\}} \boxtimes \uZ_{I^c \cup \{ j\}} \to 0.\]
These sequences are used in \cite{BG20} to go from computations for $\uZ$-coefficients to that for $\uA$-coefficients in the case of cyclic groups of square free order.
\end{exam}
	The following  gives a simple formula for the box product in square free cyclic case.
	\begin{prop}
		$(\underline{M}_1\boxtimes \underline{N}_1)\Box (\underline{M}_2\boxtimes \underline{N}_2)\cong (\underline{M}_1\Box \underline{M}_2)\boxtimes (\underline{N}_1\Box \underline{N}_2) .$
	\end{prop}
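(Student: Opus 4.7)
The plan is to prove the isomorphism using the universal properties of both constructions as left Kan extensions. Recall that $\uM\boxtimes\uN$ is the left Kan extension of $\uM\otimes\uN$ along the product functor $m:\mathcal{B}_{C_d}^{\mathrm{op}}\times\mathcal{B}_{C_{n/d}}^{\mathrm{op}}\to\mathcal{B}_G^{\mathrm{op}}$ (which is available precisely because $n$ square-free gives the splitting $G\cong C_d\times C_{n/d}$), while $\uM\Box\uN$ is the left Kan extension of $\uM\otimes\uN$ along $\mu:\mathcal{B}_G^{\mathrm{op}}\times\mathcal{B}_G^{\mathrm{op}}\to\mathcal{B}_G^{\mathrm{op}}$. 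The aim is to rewrite both sides of the claimed isomorphism as a single Kan extension of the common fourfold tensor $\uM_1\otimes\uN_1\otimes\uM_2\otimes\uN_2$, along two functors that agree up to a permutation of the four arguments, and then invoke uniqueness of Kan extensions.

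The first ingredient is the general fact that tensor product of abelian groups preserves colimits in each variable, so for any functors $P_i:\mathcal{C}_i\to\mathcal{D}_i$ and $F_i:\mathcal{C}_i^{\mathrm{op}}\to\mathcal{A}b$ one has
\[(\mathrm{Lan}_{P_1}F_1)\otimes(\mathrm{Lan}_{P_2}F_2)\;\cong\;\mathrm{Lan}_{P_1\times P_2}(F_1\otimes F_2).\]
Combined with the composition law $\mathrm{Lan}_Q\circ\mathrm{Lan}_P\cong\mathrm{Lan}_{Q\circ P}$ for left Kan extensions, this yields
\[(\uM_1\boxtimes\uN_1)\Box(\uM_2\boxtimes\uN_2)\;\cong\;\mathrm{Lan}_{\mu\circ(m\times m)}(\uM_1\otimes\uN_1\otimes\uM_2\otimes\uN_2),\]
\[(\uM_1\Box\uM_2)\boxtimes(\uN_1\Box\uN_2)\;\cong\;\mathrm{Lan}_{m\circ(\mu_{C_d}\times\mu_{C_{n/d}})}(\uM_1\otimes\uM_2\otimes\uN_1\otimes\uN_2).\]

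Next I would compare the two indexing composites. On objects, $\mu\circ(m\times m)$ sends $(X_1,Y_1,X_2,Y_2)\mapsto(X_1\times Y_1)\times(X_2\times Y_2)$ in $\mathcal{B}_G^{\mathrm{op}}$, whereas $m\circ(\mu_{C_d}\times\mu_{C_{n/d}})$ sends $(X_1,X_2,Y_1,Y_2)\mapsto(X_1\times X_2)\times(Y_1\times Y_2)$. These composites are identified, via the symmetry of the cartesian product of $G$-sets, under the swap $\sigma:(X_1,Y_1,X_2,Y_2)\mapsto(X_1,X_2,Y_1,Y_2)$ of factors of the fourfold product of Burnside categories. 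The same $\sigma$ provides the matching symmetry isomorphism $\uM_1\otimes\uN_1\otimes\uM_2\otimes\uN_2\cong\uM_1\otimes\uM_2\otimes\uN_1\otimes\uN_2$ on the tensor side. Uniqueness of left Kan extensions up to canonical natural isomorphism then delivers the desired isomorphism, natural in all four arguments.

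The main delicate point, though essentially formal, is the coherence bookkeeping: one must check that the swap $\sigma$ intertwines correctly with the tensor-product symmetry, so that the two rewrites identify \emph{the same} left Kan extension. Once this is in place, the entire argument reduces to the universal property of Kan extensions and the symmetric monoidal structure on finite $G$-sets, with the square-free hypothesis entering only through the existence of the product functor $m$ associated to the splitting $G\cong C_d\times C_{n/d}$.
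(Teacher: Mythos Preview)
Your proposal is correct and follows essentially the same approach as the paper: both arguments unwind the box and $\boxtimes$ constructions via their universal properties as left Kan extensions, reduce each side to a Kan extension of the fourfold tensor $\uM_1\otimes\uN_1\otimes\uM_2\otimes\uN_2$ along a composite functor into $\mathcal{B}_G^{\mathrm{op}}$, and then identify $\mu\circ(m\times m)$ with $m\circ(\mu_{C_d}\times\mu_{C_{n/d}})$ up to the swap of the middle two factors. Your write-up is in fact a bit more explicit than the paper's (which leaves the swap and the ``$\otimes$ commutes with $\mathrm{Lan}$'' step implicit), but the content is the same.
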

	\begin{proof}
		Since the box product can be described as the left Kan extension along the cartesian product functor $\mu$, we have
		\begin{myeq}\label{box-hom-Kan}
			\Hom_{[\mcal{B}^\textup{op}_{G}\to \mcal{A}b]} \big((\underline{M}_1\boxtimes \underline{N}_1)\Box (\underline{M}_2\boxtimes \underline{N}_2),\,\underline{L}\big)\cong \Hom_{[\mcal{B}^\textup{op}_{G}\times \mcal{B}^\textup{op}_{G}\to \mcal{A}b]} \big((\underline{M}_1\boxtimes \underline{N}_1){\otimes} (\underline{M}_2\boxtimes \underline{N}_2), \,\underline{L}\circ \mu\big) .
		\end{myeq}
		Also, $\uM\boxtimes \uN$ is the left Kan extension along the above functor $ m $, hence we have 
		\begin{equation*}
			\Hom_{[\mcal{B}^\textup{op}_{G}\to \mcal{A}b]} \big((\underline{M}_1\Box \underline{M}_2)\boxtimes (\underline{N}_1\Box \underline{N}_2),\,\underline{L}\big) \cong \Hom_{[\mcal{B}^\textup{op}_{C_{d}} \times  \mcal{B}^\textup{op}_{C_{n/d}}\to \mcal{A}b]} \big((\underline{M}_1\Box \underline{M}_2){\otimes} (\underline{N}_1\Box \underline{N}_2), \,\underline{L}\circ m\big)
		\end{equation*} 
		\[
		\cong \Hom_{\big[ [\mcal{B}^\textup{op}_{C_{d}}\times \mcal{B}^\textup{op}_{C_{d}} ] \times  [\mcal{B}^\textup{op}_{C_{n/d}} \times \mcal{B}^\textup{op}_{C_{n/d}}]\to \mcal{A}b\big]} \big((\underline{M}_1{\otimes} \underline{M}_2){\otimes} (\underline{N}_1{\otimes} \underline{N}_2), \,\underline{L}\circ m \circ (\mu_1\otimes \mu_2)\big).
		\]
		Further using the functor $ m $, we can rewrite   \eqref{box-hom-Kan} as 
		\[ 
			\Hom_{\big[ [\mcal{B}^\textup{op}_{C_{d}}\times  \mcal{B}^\textup{op}_{C_{n/d}} ] \times [\mcal{B}^\textup{op}_{C_{d}} \times  \mcal{B}^\textup{op}_{C_{n/d}}]\to \mcal{A}b\big]} \big((\underline{M}_1{\otimes} \underline{N}_1){\otimes} (\underline{M}_2{\otimes} \underline{N}_2), \,\underline{L}\circ \mu \circ (m{\times } m)\big).
		\]
	\end{proof}
	
	\section{The group of invertible $ \uA $-modules}\label{invertiblemod}
	Recall that $RO_0(G)$ is the kernel of the dimension map 
	\begin{align*}
		RO(G)&\xrightarrow{d} C(G),\\
		\alpha&\mapsto \dim(\alpha^H)=|\alpha^H|.
	\end{align*}
	  The Mackey functors $ \upi_\alpha^G(H\uA) $ for $ \alpha\in RO_0(G) $ happen to be invertible $ \uA $-modules \cite{tDP78}.  Let $ G=C_n $. We say  $\uM\in \MM_G  $ is invertible if there exists $ \uN\in \MM_G $ such that $ \uM\Box \uN\cong \uA $. It turns out that  $ \uM $ is invertible implies it is isomorphic to $ \uA[\tau] $ for certain function $ \tau $, see Definition \ref{tburn}. We rewrite the computation of Angeltveit \cite{Ang21} which determines the group of invertible $\uA$-modules to match the notation of the following sections. In particular, we define explicit generators $\mu_n(\alpha)$ for the invertible modules $\upi_\alpha (H\uA)$.\par
	\begin{defn}\label{tburn}
		Let $\tau $ be a  function from the set of subgroups of $ G $ to $ \Z $. 
		Thus  for each $ d\mid n $ we have $ \tau_d \in \Z$, and   $ \tau_n $ is declared to be $ 1 $.  The Mackey functor $ \uA[\tau] $   is defined in a similar way as of $ \uA $ except the restriction maps now depend on $ \tau $:
		\begin{equation*}
			\res^G_{C_d}\,\mu_n= \big(\prod_{d\mid s} \tau_{s}\big)\,\mu_d.
		\end{equation*} 
		Hence  for $ C_k\le C_d $, the restriction map is
		\begin{equation*}
			\res^{C_d}_{C_{k}} \mu_d=\big(\prod_{{k\mid s,\, d\nmid s}} \tau_{s}\big)\, \mu_{k}.
		\end{equation*}
		Therefore  $\uA[\tau ](G/C_d)\cong A(C_d)$ has a basis $\{\mu_d, \tr^{C_d}_{C_k}\mu_k \textup{~for~} k\mid d\}$. The action of $G/C_d$ on $A(C_d)$ is trivial. 
	\end{defn}
	\begin{exam}
		Let $ n=pq $. The Mackey functor structure $ \underline{A}[\tau]$ can be described in terms of $ \tau_p,\tau_q $ and $ \tau_e \in \mathbb{Z}$ as 
		\small
		\[
		\xymatrix{ & & \Z^4\ar@/_1.1pc/[dll]_{{ \left[\begin{smallmatrix}	\tau_p & q & 0 & 0 \\	0 & 0 & \tau_p\tau_e  & q	\end{smallmatrix}\right]}}   \ar@/^1.1pc/[drr]^{{ \left[\begin{smallmatrix} \tau_q & 0 & p & 0 \\ 0  & \tau_q\tau_e & 0 & p \end{smallmatrix}\right]}}  & & \\ 
			\Z^2 \ar@/_.7pc/[urr]|{{\tiny \left[\begin{smallmatrix}	 0 & 0 \\1 & 0 \\ 0 & 0\\ 0 & 1  \end{smallmatrix}\right]}} \ar@/_1.1pc/[drr]_{{ \left[\begin{smallmatrix}   \tau_q\tau_e & p \end{smallmatrix}\right]}} & & & & \Z^2 \ar@/^.7pc/[ull]|{{\tiny \left[\begin{smallmatrix} 0 & 0 \\0 & 0 \\ 1 & 0\\ 0 & 1  \end{smallmatrix}\right]}} \ar@/^1.1pc/[dll]^{{ \left[\begin{smallmatrix} \tau_p\tau_e & q \end{smallmatrix}\right]}} \\ 
			&  &  \Z \ar@/_.7pc/[ull]|{{\tiny \left[\begin{smallmatrix} 0 \\ 1 \end{smallmatrix}\right]}} \ar@/^.7pc/[urr]|{{\tiny \left[\begin{smallmatrix} 0 \\ 1 \end{smallmatrix}\right]}}&&}
		\]
		\normalsize
	\end{exam}
	Our formulation of $\uA[\tau]$ differs slightly from the description given in \cite{Ang21}. Here we have that $\res^G_{C_d}(\mu_n)=\big(\prod_{s\mid (n/d)} \tau_{s}\big)\,\mu_d$, and $\tau _e=1$. The following result classifies all invertible $\uA$-modules.  
	\begin{theorem}[\cite{Ang21}]\label{isocls} 	Let  $ \tau $ and $ \tau' $ be two functions as in Definition \ref{tburn}.
		\begin{enumerate} 
			\item
			Then 
			\[
			\uA[\tau ]\Box \uA[\tau ']\cong \uA[\tau \tau '], \qquad (\tau \tau')_d=\tau _d\cdot \tau'_d.
			\]
			\item  We have 
			$$
			\uA[\tau]\cong \uA[\tau'] \quad\textup{~if and only if ~} \quad  \tau_d\equiv  \pm\tau'_d \pmod {\frac{n}{d}}$$
			for all $ d\mid n $ such that $ \tau_d$ and $\tau'_d\not\equiv0\pmod {\frac{n}{d}} $.
			\item Let $\uM\in \mathcal{M}_G$ be invertible. Then $\uM\cong \uA[\tau ]$ where for each $d\mid n$, $\gcd(\tau _d, n/d)=1$.
		\end{enumerate}
	\end{theorem}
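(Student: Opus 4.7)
The plan is to prove the three parts in sequence using the explicit box-product formula from Definition \ref{defmf} and an inductive analysis over the subgroup lattice.

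For part (1), I would compute $\uA[\tau]\Box\uA[\tau']$ level by level. At each level $G/C_d$, the tensor-product factor carries a distinguished element $\mu_d\otimes\mu_d$, and the Frobenius relations identify the transferred summands with genuine transfers of $\mu_k\otimes\mu_k$ from proper subgroups. The key verification is the composite restriction
\[
\res^G_{C_d}(\mu_n\otimes\mu_n)=\bigl(\prod_{d\mid s}\tau_s\bigr)\bigl(\prod_{d\mid s}\tau'_s\bigr)(\mu_d\otimes\mu_d)=\bigl(\prod_{d\mid s}(\tau\tau')_s\bigr)(\mu_d\otimes\mu_d),
\]
which matches the restriction rule defining $\uA[\tau\tau']$. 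Since the Weyl action is trivial on both sides, the assignment $\mu_d\otimes\mu_d\mapsto\mu_d^{\uA[\tau\tau']}$, combined with the corresponding assignment on transferred generators, extends to the desired isomorphism.

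For part (2), the ``if'' direction constructs an explicit isomorphism $\phi\colon\uA[\tau]\to\uA[\tau']$ when $\tau_d\equiv\epsilon_d\tau'_d\pmod{n/d}$ with $\epsilon_d\in\{\pm 1\}$, by setting $\phi(\mu_d)=\epsilon_d\mu_d+(\text{correction transfers})$ and checking Mackey-functor compatibility. The crucial input is the identity $\res^G_{C_d}\tr^G_{C_d}\mu_d=(n/d)\mu_d$, available in the cyclic case via the Mackey double-coset formula, which lets us absorb any discrepancy of the ``top-level'' restriction modulo $n/d$ into adjustments of the transferred terms. For the ``only if'' direction, any isomorphism sends $\mu_n^{\uA[\tau]}$ to $\mu_n^{\uA[\tau']}+(\text{transfer term})$ up to a Burnside-ring unit; since units in $A(C_d)$ reduce to $\pm 1$ modulo the transferred ideal, restricting to each level and comparing forces the congruence $\tau_d\equiv\pm\tau'_d\pmod{n/d}$.

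For part (3), an invertible $\uM$ with inverse $\uN$ satisfies $\uM(G/e)\otimes_{\Z}\uN(G/e)\cong\Z$, which forces $\uM(G/e)\cong\Z$. Inductively over the divisor lattice, $\uM(G/C_d)$ is a free abelian group of the correct rank with trivial Weyl action, generated by a distinguished ``top'' element $\mu_d^{\uM}$ (unique up to sign and transferred terms). The restriction $\res^G_{C_d}\mu_n^{\uM}=\tau_d\mu_d^{\uM}+(\text{transferred terms})$ then defines $\tau$. The coprimality $\gcd(\tau_d,n/d)=1$ falls out of matching $\uM\Box\uN\cong\uA$ level by level: the product of top-generator restrictions must reproduce the restriction in $\uA$, forcing $\tau_d\cdot\tau_d^{\uN}\equiv 1\pmod{n/d}$, whence $\tau_d$ is a unit modulo $n/d$.

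The main obstacle is the sign ambiguity in part (2), which requires a careful treatment of the automorphism group of each $\uA[\tau]$ and its interaction with the Weyl action on transferred generators. A secondary subtlety arises in part (3) when $\tau_d\equiv 0\pmod{n/d}$ for some $d$: here $\uA[\tau]$ fails to be invertible, but the isomorphism class remains well-defined and its classification has to be handled as a separate case outside the scope of the invertibility statement.
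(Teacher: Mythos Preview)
The paper does not supply its own proof of this theorem. Theorem~\ref{isocls} is stated with the attribution \cite{Ang21} and is followed immediately by the remark ``As a result $\uA[\tau]$ is invertible if and only if for each $d\mid n$, $\gcd(\tau_d,n/d)=1$'' and then by the next subsection; no proof environment appears. The paper explicitly says it is \emph{rewriting} Angeltveit's computation in its own notation, not reproving it, so there is no argument here to compare your proposal against.

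That said, as an outline your sketch is reasonable and follows the natural line one would take given Definition~\ref{defmf}: compute the box product level by level for part~(1), analyze automorphisms of $\uA[\tau]$ via adjustments by transferred elements for part~(2), and use the structure of invertible objects together with part~(1) for part~(3). One point to watch in part~(3) is that you define $\tau_d$ via $\res^G_{C_d}\mu_n^{\uM}$, but the Mackey functor $\uA[\tau]$ in Definition~\ref{tburn} is determined by the \emph{one-step} restrictions (equivalently, the value of $\tau_d$ enters through the ratio of successive restrictions along the chain of divisors), so your extraction of $\tau_d$ from a single composite restriction needs to be unwound into the multiplicative factors $\prod_{d\mid s}\tau_s$. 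This is only a matter of bookkeeping, but it is where the translation between your description and the paper's convention would need to be made precise.
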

	As a result $ \uA[\tau ]  $ is invertible if and only if for each $d\mid n$, $\gcd(\tau _d,n/d)=1$.
	
	\begin{mysubsection}{Equivariant homotopy in  \(RO_0(G) \) grading}\label{genBurnmf}
		For the group  $C_p $,  Lewis \cite[Theorem 2.3]{Lew88} observed that 
		$ \upi_\alpha^{C_p}(H\uA) $ can be completely determined in terms of 
		$ |\alpha| $ and $ |\alpha^{C_p}|$ except in the  case  when $ \alpha\in RO_0(C_p) $, i.e., when both $ |\alpha|=|\alpha^{C_p}|=0$. In this  case,  $ \upi_\alpha^{C_p}(H\uA) \cong \uA[d]$, where $ d $ depends on $ \alpha $. However,  $ d $ can  only be determined $\mod p$ because of the fact that $ \uA[d]\cong \uA[d'] $ if $ d\equiv d' \pmod p $. To describe the relation between $\alpha  $ and $ d $, Lewis introduced the  function $d: RO_0(C_p) \to \Z $ defined as follows: suppose  $ \alpha \in RO_0(C_p)  $ is of the form $ \sum_{i}\lambda^{k_i}-\lambda^{l_i} $ where	 $ 0< l_i, k_i<p $. 
		Then $ d(\alpha)$, written as $d_\alpha$ is  
		$$
		d_\alpha= \prod_i l_ik_i^{-1}, \quad  \textup{~where~} k_i^{-1} \in  (\Z/p)^\times \textup{~be such that~}  k_ik_i^{-1}\equiv1\pmod p.
		$$
		Although the assignment $ \alpha \mapsto d_\alpha $ is not a homomorphism from $ RO_0(C_p) $ to $ \Z $, it does  become a homomorphism when viewed as a map from $RO_0(C_p)  $ to $ (\Z/p)^\times/\pm 1 $.\par
		We carry the computations further for a general cyclic group $G=C_n$, and   prove that given an $ \alpha\in RO_0({G}) $, there is a natural candidate  $\tau (\alpha)$ so that  $ \upi_\alpha^{G}(H\uA) $ is isomorphic to $ \uA[\tau(\alpha)]$ (see Definition \ref{tburn}).	For this let $ \alpha \in RO_0(G)$ 
		be a non-zero element. Observe that, up to a suitable rearrangement, such an $\alpha$ can be written as 
		\[
		\alpha=\sum_{i=1}^{m}\eta_i-\varphi_i
		\]
		such that  $ \varphi_i=\lambda^{d_ik_i} $ and $ \eta_i=\lambda^{d_il_i} $ for some divisor $ d_i$ of $ n $ with $d_i\ne n$, and  $ k_i$, $ l_i $ are positive integers  relatively prime to $ {n}/{d_i}$.  Note that in this expression, factors of $\sigma$ must cancel out. For each $ d\mid n $, we consider the set $\mathcal{S}(d):=\{i\mid d=d_i\} $ (possibly empty). In the following we define a $ G $-equivariant map $ \mu_n(\alpha) $
		\begin{equation*}
			\mu_{n}(\alpha):= \bigwedge_i \big(f_i: S^{{\eta_i}}\to S^{{\varphi_i}}\big) 
		\end{equation*}
		such that for each $d\mid n  $, the  non-equivariant  degree of 
		\begin{equation*}
			\deg \big[\bigwedge_{i\in \mathcal{S}(d)} \big(f_i :S^{\eta_i}\to S^{\varphi_i}\big)\big]=\tau_d(\alpha).
		\end{equation*}     
		For example let $ \alpha=\lambda^{dt}-\lambda^{ds} $ where  $ s$, $t$ are  relatively prime to $ {n}/{d} $. Let $ t^{-1} $ be the  integer such that   $ 1\le t^{-1}<\frac{n}{d} $
		and $ tt^{-1}\equiv 1 \pmod {\frac{n}{d}} $. Consider the map $ \C\to \C $, given by $ z\mapsto z^{s t^{-1}} $.
		Define $ \mu_n(\alpha) $ to be  the  induced  map on the  one-point compactifications. 	In this case $ \tau_d(\alpha) $ is $ s t^{-1} $.
		In the general case, we take the smash product of all such maps to obtain the map $ \mu_n(\alpha) $, 
		\begin{myeq}\label{formu}
			\mu_{n}(\alpha): S^{\sum{\eta_i}}\to S^{\sum{\varphi_i}},
		\end{myeq}
		and our required  $ \tau_d(\alpha) $ is 
		\begin{myeq}\label{eqtaual}
			\tau_d(\alpha)=\prod_{i\in \mathcal{S}(d)}k_il_i^{-1} \pmod {\frac{n}{d}}.
		\end{myeq}
		The Hurewicz image of 	$[ \mu_n(\alpha) ]\in \pi_{\alpha}^GS^0$ defines an element in 
		$ \pi_{\alpha}^G(H\uA) $,
		also denoted by 
		$ \mu_n(\alpha) $. Let $\res_{C_d}(\alpha)  $
		denote the restriction of $ \alpha $ to $ C_d $. Similarly, we may construct $\mu_d(\res_{C_d}(\alpha))$ (simply $\mu_d(\alpha)$)
		\[ 
		\mu_d(\alpha)\in {\pi}_{\alpha}^{C_d}(H\uA)
		\]
		associated to  
		$\res_{C_d}(\alpha)  $.
		Then  $ \mu_n(\alpha) $ restricts to the element $ \mu_d(\alpha) $ multiplied by
		\[
		\deg \big(\mu_{n}(\alpha)^{C_d}: S^{\sum{\eta_i}^{C_d}}\to S^{\sum{\varphi_i}^{C_d}}\big),
		\]
		the non-equivariant degree of the $ C_d $-fixed point map.
		Hence in 
		$ {\upi}_{\alpha}^{G}(H\uA) $
		\[ 
		\res^G_{C_d}\,\mu_n(\alpha)= \big[\prod_{d\mid s} \tau_{s}(\alpha)\big]\,\mu_d(\alpha).
		\]
	\end{mysubsection}
	Consider the following subgroup of $\uM(G/G)$ generated by the  elements
	$$
	\textup{~Tr~}:=\{\tr^G_{H}(\uM(G/H)) \textup{~for all~} H\lneq G\}\subseteq \uM(G/G).
	$$
	Define the $G$-Mackey functor $I(\uM)$ by
	\[
	I(\uM)(G/H)= 
	\begin{cases}
		\textup{~Tr~} & \textup{~if~} H=G\\
		\uM(G/H) & \textup{~otherwise}.
	\end{cases}
	\]
	\par
	Associated to an $\alpha\in RO_0(G)$ we have $\tau (\alpha)$ as defined in \eqref{eqtaual}. With these notations we have the following. 
	\begin{theorem}\label{mfallze}
		Let $ \alpha\in RO_0(G) $. Then $ \upi_{\alpha}^G(H\uA)\cong  \uA[\tau(\alpha)] $.
	\end{theorem}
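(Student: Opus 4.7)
The plan is to combine the known invertibility of $\upi_\alpha(H\uA)$ with an explicit restriction computation. The preceding discussion (citing \cite{tDP78}) establishes that $\upi_\alpha(H\uA)$ is an invertible $\uA$-module for every $\alpha\in RO_0(G)$. Hence by Theorem \ref{isocls}(3), there is a function $\tau'$ with $\gcd(\tau'_d,n/d)=1$ for all $d\mid n$ such that $\upi_\alpha(H\uA)\cong\uA[\tau']$. It remains to show that $\tau'$ agrees with $\tau(\alpha)$ from \eqref{eqtaual} up to the equivalence of Theorem \ref{isocls}(2).

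For this, I use the Hurewicz classes $\mu_d(\alpha)\in\pi_\alpha^{C_d}(H\uA)$ coming from the construction \eqref{formu}, and establish the restriction identity
\begin{equation*}
\res^G_{C_d}\mu_n(\alpha)=\Bigl(\prod_{d\mid s}\tau_s(\alpha)\Bigr)\mu_d(\alpha),
\end{equation*}
matching the restriction formula in Definition \ref{tburn}. This is shown by decomposing $\mu_n(\alpha)=\bigwedge_i f_i$ with $f_i:S^{\lambda^{d_il_i}}\to S^{\lambda^{d_ik_i}}$ given by $z\mapsto z^{k_il_i^{-1}}$, and analyzing each factor under restriction to $C_d$. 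Factors with $d\mid d_i$ have both source and target restricting to trivial $C_d$-representations, and become non-equivariant self-maps of $S^2$ of integer degree $k_il_i^{-1}$; factors with $d\nmid d_i$ restrict to non-trivial $C_d$-equivariant maps of $C_d$-representation spheres, and together they reassemble, by the same recipe applied to $\res^G_{C_d}\alpha$, into $\mu_d(\alpha)$. Using the partition $\{i:d\mid d_i\}=\bigsqcup_{s:\,d\mid s}\mathcal{S}(s)$ and the definition \eqref{eqtaual} collects the degree contributions into $\prod_{d\mid s}\tau_s(\alpha)$.

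With the restriction identity in hand, I define an $\uA$-module homomorphism $\Phi:\uA[\tau(\alpha)]\to\upi_\alpha(H\uA)$ by sending each generator $\mu_d$ (resp.\ $\tr^G_{C_d}\mu_d$) to $\mu_d(\alpha)$ (resp.\ $\tr^G_{C_d}\mu_d(\alpha)$). Compatibility with restrictions follows from the identity above and its analogues on subgroup lattices; compatibility with transfers is automatic since transfers in $\uA[\tau']$ coincide with those in $\uA$. Both source and target are invertible $\uA$-modules with value $A(C_d)$ at each level, so it suffices to show $\Phi$ is non-trivial modulo transfers. This non-triviality is seen by restricting to the trivial subgroup: $\res^G_e\mu_n(\alpha)$ equals the non-equivariant degree $\prod_i k_il_i^{-1}$, which matches the analogous image of $\mu_n$ in $\uA[\tau(\alpha)]$. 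Hence $\Phi$ is an isomorphism.

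The main obstacle I anticipate is the careful bookkeeping for the restriction computation --- verifying that the factors with $d\nmid d_i$ reassemble precisely according to the recipe defining $\mu_d(\res^G_{C_d}\alpha)$, and that the integer lifts of the residues $\tau_s(\alpha)\pmod{n/s}$ combine consistently modulo $n/d$ under the product $\prod_{d\mid s}\tau_s(\alpha)$. Handling this carefully, together with the auxiliary check that $\mu_n(\alpha)$ corresponds to $\pm 1$ times (rather than a proper multiple of) the top generator, is the technical heart of the argument.
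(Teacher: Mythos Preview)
Your overall strategy matches the paper's: invoke invertibility via \cite{tDP78} and Theorem \ref{isocls}(3), then pin down $\tau'$ by showing that $\mu_n(\alpha)$ serves as the distinguished generator at the top level. The restriction identity you describe is exactly what the paper establishes just before the theorem, so that part is fine.

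The gap is in your final step. You claim that to show $\Phi$ is an isomorphism it suffices to check it is ``non-trivial modulo transfers,'' and you propose to verify this by restricting to the trivial subgroup. But restriction to $e$ does \emph{not} detect the quotient modulo transfers: in $\uA[\tau'](G/G)\cong A(G)$ one has $\res^G_e(\tr^G_{C_d}\mu'_d)=[G:C_d]\cdot\res^{C_d}_e\mu'_d\neq 0$, so transfer classes survive restriction. Knowing the underlying degree $\prod_i k_il_i^{-1}$ tells you only a certain integer linear combination, not the coefficient of $\mu'_n$ in the expansion of $\mu_n(\alpha)$. Even granting by induction that $\Phi$ is an isomorphism at every proper level (so that $\Phi$ restricts to an isomorphism on the transfer sub-Mackey-functor), you still need the induced map on $\langle\Z\rangle\cong A(G)/\mathrm{Tr}$ to be $\pm 1$, and restriction to $e$ does not see this quotient.

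The fix is to look at the \emph{other} end of the fixed-point filtration: the $G$-fixed-point degree. This is what the paper does, via the short exact sequence $0\to\underline{I}(\tau)\to\uA[\tau]\to\langle\Z\rangle\to 0$ together with the identification $X\wedge H\langle\Z\rangle\simeq X^G\wedge H\langle\Z\rangle$. Under the map to $\langle\Z\rangle$ the class $\mu_n(\alpha)$ goes to $\deg(\mu_n(\alpha)^G)$, and since every factor $f_i:S^{\lambda^{d_il_i}}\to S^{\lambda^{d_ik_i}}$ has $G$-fixed points $S^0\to S^0$ equal to the identity (both representations being nontrivial), this degree is $1$. That is the clean way to see $\mu_n(\alpha)$ hits the top generator modulo transfers; once you replace your restriction-to-$e$ check with this $G$-fixed-point check, your argument goes through.
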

	\begin{proof}
		First  consider the case  $\alpha=\eta-\eta'\in RO_0(G)$. From \cite[Theorem 2]{tDP78}  $\upi_{\alpha}^G(H\uA)$
		is an invertible Mackey functor, so isomorphic to  $\uA[\tau ]$ for some choice of $ \tau$ by part (3) of  Theorem \ref{isocls}. It suffices to observe that $\mu_n(\alpha)$ is a generator of  $\upi_\alpha^G(H\uA)(G/G)$. For this, proceed by induction assuming the result holds for all proper subgroups. At $G=e$, the result holds trivially. Consider the Mackey functor $\underline{I}(\tau ):=I(\uA[\tau ])$. Then we have a short exact sequence of Mackey functors 
		\[
		0\to \underline{I}(\tau )\to \uA[\tau]\xrightarrow{} \langle  \mathbb{Z} \rangle\to 0.
		\]
		Consider the following diagram
		\begin{myeq}\label{sesmf}
			\begin{tikzcd}
			& \upi^G_{\alpha}(H\underline{I}) \arrow[r] \arrow[d] & \upi^G_{\alpha}(H\uA) \arrow[r] \arrow[d, "\cong"] & \upi^G_{\alpha}(H\langle\mathbb{Z}   \rangle)  \arrow[d, "\cong"] &   \\
			0 \arrow[r] & \underline{I}(\tau ) \arrow[r]           & \uA[\tau ] \arrow[r]                    & \langle\mathbb{Z}   \rangle \arrow[r]          & 0.
		\end{tikzcd}
		\end{myeq}
		 Observe that, for any $G$-space $X$, $X\smas H\langle\mathbb{Z}   \rangle\simeq X^G\smas H\langle\mathbb{Z}   \rangle$ as both represent the same cohomology theory.
		  Therefore we get 
		\begin{equation*}
			\begin{tikzcd}
				S^{0}\smas H\langle\mathbb{Z}   \rangle \arrow[rrr, "{\mu_n(\alpha)}^G\smas H\langle\mathbb{Z}   \rangle"] \arrow[d, "\simeq"'] & & &S^{0} \smas H\langle\mathbb{Z}   \rangle\arrow[d, "\simeq"] \\
				S^{\eta}\smas H\langle\mathbb{Z}   \rangle\arrow[rrr, "\mu_n(\alpha)\smas H\langle\mathbb{Z}   \rangle"]                       &&&  S^{\eta'}\smas H\langle\mathbb{Z}   \rangle.                      
			\end{tikzcd}
		\end{equation*}
		Note that  $\mu_n(\alpha)^G=\textup{Id}\in \pi^G_0(S^0) $.  Therefore in the diagram \eqref{sesmf}, the class of  $\mu_n(\alpha)\in \upi_\alpha^G(H\uA))$ maps to $1$ in the bottom right corner $\langle\mathbb{Z}   \rangle$. Hence, $\mu_n(\alpha)$ serves as a generator of $\uA[\tau]$. Since the restrictions of $\mu_n(\alpha)$ are given by $\tau(\alpha)$, we get $\uA[\tau ]\cong \uA[\tau(\alpha)]$.\par
		For the general case, we use \eqref{smasbox}. Note that $S^{-\alpha}\smas H\uA $ is $(-1)$-connected, and $S^{-\alpha-\beta}\smas H\uA\smas H\uA\simeq S^{-\alpha}\smas H\uA\smas S^{-\beta}\smas H\uA$. So $\upi_{\alpha+\beta}(H\uA)\cong \upi_{\alpha}(H\uA)\Box \upi_{\beta}(H\uA) $.

	\end{proof}

	\section{Lifting orientation classes to $ \uA $-coefficients}\label{lifting}
	We now  construct  certain homotopy classes which provide the necessary generators of the ring $ \pi_{\bigstar_\div}^G(H\uA)  $ studied in later sections.  These classes were considered in the context of $ \uZ $-coefficients in \cite[\S 3]{HHR16}.
	\begin{defn}
		For a finite group $ \GG $, let $ V $ be a $ \GG $-representation. The inclusion $ \{0\}\subset V $ induces the map \mbox{$ S^0\to S^V $.} This gives the element $ a_V\in \pi_{-V}^\GG(S^0)\cong \pi_0^\GG(S^V) $.
		Composing it with the map $ S^0\to H\uA $ we get an element in   $  \pi_{-V}^\GG( H\uA) $, which is also denoted by $ a_V $.
	\end{defn} 
	Let $ \xi $ be a non-trivial $ 1 $-dimensional complex $ \GG $-representation coming from the group map $\xi: \GG\to S^1 $ with $ \ker(\xi)=H$. 
	We may endow  the following  $  \GG $-CW structure on $ S^{\xi} $ 
	\begin{myeq}\label{eq:cw str}
		S^0\cup ({\GG/H}_+\smas e^1) \cup ({\GG/H}_+\smas e^2).
	\end{myeq} 
	\noindent So the equivariant cellular chain complex \cite[\S 3.3]{HHR16},  
	$	A(H)\xrightarrow{0} A(H)\xrightarrow{\tr_{H}^\GG}A(\GG) $
	gives
	\begin{equation*}
		\pi_{-\xi}^\GG( H\uA)\cong A(\GG)/\tr_{H}^\GG A(H).
	\end{equation*}
	This implies that $ a_{\xi}  $ supports the relation 
	\begin{equation*}
		[\GG \times _{H} S]\cdot a_{\xi} =0 \quad \textup{~for all~} S\in A(H).
	\end{equation*}
	\begin{defn}
		Let $ I_{\tr^\GG_H} $ denote the ideal in $ A(\GG) $ generated by the elements $ \tr^\GG_H(A(H)) $.
	\end{defn}
	In terms of this notation, we have the relation
	\begin{myeq}\label{eq:au general}
		I_{\tr^\GG_{\ker(\xi)}} a_{\xi} =0.
	\end{myeq}
	For $V=\sum_i n_i\xi_i  $, we have $ a_V=\prod_ia_{\xi_i}^{n_i} $.\par
	Let $G=C_n$, $ n =p^m$. We define the classes $\alpha_{p^\ell}$ in $ A(G) $ by 
	\begin{equation*}
		\alpha_{p^{\ell}}=\big[ G/C_{n/p^{\ell}}\big]= \tr^G_{C_{n/p^{\ell}}} \res^G_{C_{n/p^{\ell}}}(1), 	
	\end{equation*}
	where  $ 1 $   refers to the element $ [G/G] $. In these terms, we may write the Burnside ring $ A(G) $  as
	\begin{myeq}\label{genpm}
		A(G)\cong \mathbb{Z}[ \alpha_{p},\cdots,\alpha_{p^{m}} ]/(\alpha_{p^{i}}\alpha_{p^j}-{p^{i}}\alpha_{p^{j}}\mid \textup{~if~} i\le j ).
	\end{myeq}
	In  case of $ G=C_n $,  $n={p_1^{s_1}\dots p_k^{s_k}} $, 
	\begin{equation*}
		A(G)\cong A(C_{p_1^{s_1}})\otimes\cdots \otimes A(C_{p_k^{s_k}}).
	\end{equation*}
	Therefore, if $ G=C_n $, $ n={p_1}\cdots {p_k} $, then 
	\begin{myeq}\label{gensq}
		A(G)\cong \mathbb{Z}[ \alpha_{p_1},\cdots,\alpha_{p_{k}} ]/(\alpha_{p_{i}}^2-{p_{i}}\alpha_{p_{i}}).
	\end{myeq}
	For example, $ A(C_p) \cong \Z[\alpha_p]/(\alpha_p^2-p\alpha_p)$,  $ \alpha_p=[C_p/e]=\tr^{C_p}_e \res^{C_p}_e(1)$.
	If $ G/H =C_{p_{j_1}^{r_1}\cdots p_{j_t}^{r_t}}$ where $ r_i> 0$ for all $ i $, then the relation $\eqref{eq:au general}  $ simplified as the collection of relations 
	\begin{myeq}\label{eq: a-relation in A-coeff}
		\prod_{i=1}^t\alpha_{p_{j_i}^{m_i}}\cdot a_{\xi} =0, \qquad \textup{ } m_i\geq r_i \textup{ for all } i.
	\end{myeq}\par
	If $ V $ is an orientable $ \GG $-representation, the orientation  class $ [V]\in H_{\dim V}(S^V;\Z) $ is preserved by the $ \GG $-action. This allows us to define $ u_V^\Z\in\pi^\GG_{\dim  V-V}(H\uZ)\cong  H^\GG_{\dim V}(S^V;\uZ) \cong \Z$ to be the class that restricts to $ [V] $ \cite{HHR16}. These classes naturally satisfy the product formula $ u_Vu_W=u_{V\oplus W} $. In what follows, we explain a construction  of these classes for $ \uA $-coefficients.\par
	Let $ \xi: \GG\to S^1 $ be a complex 1-dimensional $ G $-representation.  One has the cofibre sequences  
\[ {\GG/H}_+\xrightarrow{1-g} {\GG/H}_+\to S(\xi)_+ , \mbox{ and }  S(\xi)_+\to S^0\to S^{\xi} .\]
  Since  $ \pi_0^\GG({\GG/H}_+\smas H\uA)\cong A(H) $,  the first cofibre sequence  implies  
\[ \pi^\GG_{1}(S(\xi)_+\smas H\uA)\cong \ker(1-g: A(H)\to A(H))\cong A(H),\] 
	as $ g $ acts trivially on $ A(H) $. This combined with the second cofibre sequence gives
	\begin{myeq}\label{eq:defining u-class}
		\pi_{2-\xi}^\GG(H\uA) \xrightarrow{\cong} \pi_1^\GG(S(\xi)_+\smas H\uA)\xrightarrow{\cong} \pi_0^\GG({\GG/H}_+\smas H\uA)\xrightarrow{\cong} \pi_0^{H}(H\uA)= A(H).
	\end{myeq}
	We use this  isomorphism to define the following class. 
	\begin{defn}\label{def:u-class}
		Define the class $ u_{\xi} \in  \pi^\GG_{2-\xi}(H\uA)$ to be the element which corresponds to $1\in A(H) $ under the  isomorphism \eqref{eq:defining u-class}. 
	\end{defn}
	The class $ u_V^\Z $ is the  image of $ u_\xi^A $ in the case $ V=\xi $.	Observe that, the $ A(\GG) $-module structure on $ \pi^\GG_{2-\xi}(H\uA)\cong A(H) $ is given by the restriction map $ \res: A(\GG)\to A(H) $. Define $ I_{\res^\GG_H} $ to be the  ideal which is the kernel of the restriction map $ A(\GG)\to A(H) $.
	If the restriction map is surjective (e.g. groups of  square free order) then  
	\begin{equation*}
		\pi^\GG_{2-\xi}(H\uA)\cong A(\GG)\{u_\xi\}/I_{\res^\GG_{H}} u_{\xi}.
	\end{equation*}
	In general, this  may not be a cyclic  module as the restriction map need  not  be surjective (e.g.  groups of prime power order). 
	This leads us to define the following set of classes that together generate $A(H)$ as an $A(G)$-module.
	\begin{defn}\label{defxh}
		For a subgroup  $ H $ of $G$, and an isomorphism $\phi:M\to A(H)$ of $A(G)$-modules denote by
		\begin{meq*}\label{genH}
			x^{(H)}:=\{ [H/K]x \mid K\leq H\}\subseteq M,
		\end{meq*}
		which serve as additive generators satisfying $\phi([H/K]x)=[H/K]$.
	\end{defn}
	As an $ A(\GG) $-module, $ u_\xi^{(H)} $ generates the group $ \pi^\GG_{2-\xi}(H\uA)\cong  A(H) $.  The classes $ u_\xi^{(H)} $ satisfy two kinds of relations. One type  is $ I_{\res^\GG_H}\cdot u_\xi^{(H)}=0 $. The relations  present inside $ A(H) $ give us the second kind of relation. For groups of prime power order and square free order,  the ideal $ I_{\res^\GG_H} $ takes the following form.
	\begin{prop}\label{prop:u-rel}
		For groups of prime power order and square free order, the ideal $ I_{\res^\GG_H} $ is generated by the elements $ (\alpha_{d}-d) $, where $ d $ is a divisor of $ |G/H| $. Hence,	for every divisor $ d $ of $ |G/H| $,  $ u_{\xi} $ and  $ u_{\xi}^{(H)} $ support following relations
		\begin{myeq}\label{eq: u-relation for A-coeff}
			(\alpha_{d}-d)u_{\xi}=0 \textup{ and  } (\alpha_{d}-d)u_{\xi}^{(H)}=0.
		\end{myeq}
	\end{prop}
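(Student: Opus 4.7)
I identify $I_{\res^\GG_H} = \ker(\res^\GG_H: A(\GG)\to A(H))$ explicitly as the ideal generated by $\{\alpha_d - d : d \mid |\GG/H|\}$, and then read off the $u$-relations from the $A(\GG)$-module structure on $\pi^\GG_{2-\xi}(H\uA) \cong A(H)$ supplied by \eqref{eq:defining u-class}. The first step is to show $\alpha_d - d \in I_{\res^\GG_H}$ for every divisor $d$ of $|\GG/H|$. Since cyclic subgroups of $\GG=C_n$ are determined by their orders and $|H|$ divides $n/d$ whenever $d \mid |\GG/H|$, we have $H \leq C_{n/d}$, so $\GG/C_{n/d}$ restricts to $H$ as $d$ copies of $H/H$, giving $\res^\GG_H(\alpha_d) = d$. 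By the double-coset formula (together with the relations in \eqref{genpm} or \eqref{gensq}) the class $\alpha_d$ factors as $\prod_i \alpha_{p_i^{a_i}}$ for $d = \prod_i p_i^{a_i}$, so telescoping reduces generation to the prime-power (resp. prime) divisors of $|\GG/H|$.

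\emph{Reverse containment via rank matching.} For the opposite inclusion I would work in the quotient $Q = A(\GG)/J$, where $J$ is the ideal generated by $\{\alpha_d - d : d \mid |\GG/H|\}$. In the prime-power case $\GG = C_{p^m}$, $H = C_{p^{m-k}}$, the presentation \eqref{genpm} together with the substitutions $\alpha_{p^\ell} = p^\ell$ for $\ell \leq k$ collapses $Q$ to $\Z[\alpha_{p^{k+1}},\ldots,\alpha_{p^m}]/(\alpha_{p^i}\alpha_{p^j} - p^i\alpha_{p^j}:k<i\leq j)$, a free $\Z$-module of rank $m-k+1$, matching the rank of $A(H) = A(C_{p^{m-k}})$. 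A direct computation shows that the induced map sends $\alpha_{p^\ell} \mapsto p^k\alpha'_{p^{\ell-k}}$ for $\ell > k$, embedding the natural basis of $Q$ into a basis of $A(H)$. In the square-free case, the tensor decomposition $A(\GG) \cong \bigotimes_i A(C_{p_i})$ reduces the computation prime-by-prime: the factor $A(C_{p_i})/(\alpha_{p_i} - p_i) \cong \Z$ for each $p_i \mid |\GG/H|$, while factors with $p_i \mid |H|$ remain untouched, so $Q$ is again free of the same rank as $A(H)$, and the induced map to $A(H)$ is injective on these bases. In either case $J = I_{\res^\GG_H}$.

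\emph{The $u$-relations.} Under the identification of \eqref{eq:defining u-class}, $u_\xi$ corresponds to $1 \in A(H)$ and the $A(\GG)$-action on $\pi^\GG_{2-\xi}(H\uA)$ factors through $\res^\GG_H$, so $r \cdot ([H/K]u_\xi) \leftrightarrow \res^\GG_H(r)\cdot [H/K]$ for every $r \in A(\GG)$ and every $K \leq H$. Vanishing of $\res^\GG_H(\alpha_d - d)$ therefore immediately yields both relations in \eqref{eq: u-relation for A-coeff}. The main obstacle in the plan is the injectivity check in the reverse containment: the prime-power case is essentially a comparison of explicit bases, but the square-free case relies crucially on the tensor factorization of $A(\GG)$ being compatible with the restriction $\res^\GG_H$ (since $H$ is itself a product of cyclic factors of prime order), without which a more delicate induction on the number of prime factors would be required.
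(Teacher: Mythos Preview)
The paper states this proposition without proof, treating it as a routine verification from the explicit presentations \eqref{genpm} and \eqref{gensq} of the Burnside ring. Your proposal supplies a complete and correct argument. The forward inclusion $\alpha_d - d \in I_{\res^\GG_H}$ is exactly as you say: since $d \mid |\GG/H|$ forces $H \leq C_{n/d}$, the $H$-action on $\GG/C_{n/d}$ is trivial and $\res^\GG_H(\alpha_d)=d$. Your rank-matching argument for the reverse inclusion is also sound in both cases. In the prime-power case the images $\{1,\,p^k\alpha'_{p},\,\ldots,\,p^k\alpha'_{p^{m-k}}\}$ are visibly $\Z$-independent in $A(H)$, so $Q\to A(H)$ is injective; in the square-free case the tensor decomposition $A(\GG)\cong\bigotimes_i A(C_{p_i})$ is compatible with $\res^\GG_H=\bigotimes_i\res^{C_{p_i}}_{H\cap C_{p_i}}$, and each factor with $p_i\mid |\GG/H|$ collapses to $\Z$ precisely by $\alpha_{p_i}\mapsto p_i$, so the induced map $Q\to A(H)$ is even an isomorphism. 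Your remark that in the prime-power case one genuinely needs all the generators $\alpha_{p^\ell}-p^\ell$ for $1\le\ell\le k$ (they do not reduce to $\alpha_p-p$ alone, since $(\alpha_p-p)\alpha_{p^j}=0$) is implicit but worth making explicit. The deduction of the $u$-relations from the $A(\GG)$-module structure via $\res^\GG_H$ is then immediate, as you note.
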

	Next we observe certain quadratic relations involving the classes $a_\xi$ and $u_\xi$ for two different one-dimensional complex representations $\xi$. For $ i=1,2 $, let $ \xi_i $ be a non-trivial $ 1 $-dimensional complex $ \GG $-representation coming from $\xi_i: \GG\to S^1 $ with $ \ker(\xi_i)=H_i $. 	The product of the CW structures \eqref{eq:cw str}  gives a $  \GG $-CW structure on $ S^{\xi_1+\xi_2}= S^{\xi_1}\smas S^{\xi_2} $. So the equivariant cellular chain complex is
	{\fontsize{3.34mm}{2mm}\selectfont
		\begin{myeq}\label{xi1xi2}
			\begin{split}
				A(\GG/H_1\times \GG/H_2)	\xrightarrow[]{\partial_4}
				A(\GG/H_1\times \GG/H_2)\oplus A(\GG/H_1\times \GG/H_2) \xrightarrow[]{\partial_3}A(H_1)\oplus A(H_2)\oplus A(\GG/H_1\times \GG/H_2)\\
				\xrightarrow[]{\partial_2} 	A(H_1)\oplus A(H_2)\xrightarrow[]{\partial_1}A(G)
			\end{split}
		\end{myeq}
	}
	\normalsize
	If  $ \partial $ is the  boundary map of  individual factor, then the boundary map $ \partial_r $ is determined via
	\begin{myeq}\label{eq:boundary au rel}
		\partial_r(e^i\smas e^j)= \partial(e^i)\smas e^j+(-1)^ie^i\smas \partial(e^j).
	\end{myeq} 
Note that the following two $ \GG$-sets are isomorphic
	\begin{myeq}\label{eq:isomorphic G set}
		\begin{split}
			\varphi : \GG/H_1\times \GG/H_2&\to \GG/H_1\cap H_2\times |\GG/H_1H_2|\\
			(xH_1,yH_2)&\mapsto (xH_1\cap H_2, z),
		\end{split}
	\end{myeq}
	where $ z=x^{-1} y$.
	Let $ d={|\GG/H_1H_2|}$. The boundary map $\partial_4$ is given by
	$$
	\partial_4: \bigoplus_{d} A(H_1\cap H_2)\to \bigoplus_{d} A(H_1\cap H_2)\oplus \bigoplus_{d} A(H_1\cap H_2)
	$$
	 $$ (x_1,\cdots, x_d)\mapsto (x_1-x_d,\cdots, x_d-x_{d-1})\oplus (x_1-x_2,\cdots, x_d-x_{1}).$$ 
Therefore the kernel is isomorphic to the diagonal copy of  $  A(H_1\cap H_2)$. The isomorphism $\pi^\GG_4(S^{\xi_1+\xi_2}\smas H\uA)\cong A(H_1\cap H_2)$ defines a class $u_{\xi_1+\xi_2}$ as in Definition  \ref{defxh} such that $u_{\xi_1+\xi_2}^{(H_1\cap H_2)}$ generates $\pi^\GG_{4-\xi_1-\xi_2}(H\uA)$.
	The class $ u_{\xi_1}\cdot u_{\xi_2} $ also belongs to $ \pi_4^\GG(S^{\xi_1+\xi_2}\smas H\uA) $ and equals $u_{\xi_1+\xi_2}$. This now implies 
	$$
	([H_1/K] u_{\xi_1})\cdot ([H_2/L] u_{\xi_2})=\res^{H_1}_{H_1\cap H_2}([H_1/K])\cdot \res^{H_2}_{H_1\cap H_2}([H_2/L])u_{\xi_1+\xi_2}.
	$$
	Let $ G=C_{p^m} $. Suppose $ H_1\subseteq H_2 $. The products $ u_{\xi_1}^{(H_1)}\cdot u_{\xi_2}^{(H_2)} $ in fact generate $ A(H_1\cap H_2)=A(H_1) $ (for example, when $ y=u_{\xi_2} $).  So in this case we may write $u_{\xi_1}^{(H_1)}\cdot u_{\xi_2}^{(H_2)} = u_{\xi_1+\xi_2}^{(H_1\cap H_2)}$.\par
	We also have  the following relations between  $ u_{\xi_1}a_{\xi_2} $ and $ a_{\xi_1}u_{\xi_2} $.
	\begin{thm}\label{prop:au-rel}
		The classes $ u_{\xi_1}a_{\xi_2} $ and $ a_{\xi_1}u_{\xi_2} $ in $ \pi_2^\GG(S^{\xi_1+\xi_2}\smas H\uA) $ satisfy the   relation
		\begin{myeq}\label{eq:au rel general}
			\{\Big[\frac{H_1}{H_1\cap H_2}\Big] u_{\xi_1}\}a_{\xi_2}=\{\Big[\frac{H_2}{H_1\cap H_2}\Big] u_{\xi_2}\}a_{\xi_1}.
		\end{myeq}
	\end{thm}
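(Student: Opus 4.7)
The plan is to verify the au-relation by explicit computation in the cellular chain complex \eqref{xi1xi2} for $S^{\xi_1+\xi_2}$, producing a 3-chain whose boundary realizes the stated equality.

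First I would identify the classes of interest as specific elements of $C_2 = A(H_1) \oplus A(H_2) \oplus A(\GG/H_1\times \GG/H_2)$. Under the identification $\pi_2^\GG(S^{\xi_1+\xi_2}\wedge H\uA) \cong H_2$ of this complex, $u_{\xi_1}a_{\xi_2}$ is represented by the generator $1 \in A(H_1)$ coming from the product 2-cell $e_1^2 \wedge e_2^0$, and $u_{\xi_2}a_{\xi_1}$ by $1 \in A(H_2)$ from $e_1^0 \wedge e_2^2$. Multiplication by $[H_i/(H_1\cap H_2)]$ acts on $A(H_i)$ as multiplication by $\tr^{H_i}_{H_1\cap H_2}(1)$, so the two sides of the asserted equation correspond respectively to the transferred generators $\tr^{H_1}_{H_1\cap H_2}(1)\in A(H_1)$ and $\tr^{H_2}_{H_1\cap H_2}(1)\in A(H_2)$.

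Next I would decompose $\GG/H_1\times \GG/H_2$ into its $[\GG:H_1H_2]$ orbits isomorphic to $\GG/(H_1\cap H_2)$ via \eqref{eq:isomorphic G set}, and apply the product boundary formula \eqref{eq:boundary au rel} to the 3-cells of type $e_1^2\wedge e_2^1$ and $e_1^1\wedge e_2^2$ based over these orbits. Using $\partial e_i^2 = 0$ and $\partial e_i^1 = \tr^\GG_{H_i}(e_i^0)$, together with Frobenius reciprocity in the box product and the double coset identity $\res^\GG_{H_1}\tr^\GG_{H_2}(1) = [\GG:H_1H_2]\cdot \tr^{H_1}_{H_1\cap H_2}(1)$ valid for cyclic $\GG$, the boundary of a $(2,1)$-type 3-cell based at a single orbit maps to the transferred generator in the $A(H_1)$ summand with the expected coefficient, together with an explicit correction in the $A(\GG/H_1\times \GG/H_2)$ summand coming from the shift of attaching 1-cells in the pie-slice CW model of $S^{\xi_1}$; symmetrically for $(1,2)$-type 3-cells into $A(H_2)$.

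Finally I would exhibit an integer combination of these orbit-indexed $(2,1)$- and $(1,2)$-type 3-cells whose $(1,1)$-type corrections telescope to zero under the cyclic labeling of orbits in $\GG/H_1H_2$, while the $A(H_1)$ and $A(H_2)$ contributions assemble into exactly the difference $\tr^{H_1}_{H_1\cap H_2}(1)\cdot u_{\xi_1}a_{\xi_2} - \tr^{H_2}_{H_1\cap H_2}(1)\cdot u_{\xi_2}a_{\xi_1}$. The main obstacle is a finite combinatorial check indexed by the double cosets $H_1\backslash \GG/H_2$ ensuring the telescoping cancellation in the $(1,1)$-type summand; once this is done, the boundary of the chosen 3-chain is precisely the difference of the two sides of \eqref{eq:au rel general}, proving the relation in $\pi_2^\GG(S^{\xi_1+\xi_2}\wedge H\uA)$.
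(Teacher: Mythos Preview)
Your proposal is correct and follows essentially the same route as the paper: both compute $\partial_3$ in the cellular complex \eqref{xi1xi2} after decomposing $\GG/H_1\times\GG/H_2$ via \eqref{eq:isomorphic G set}, then exhibit a $3$-chain whose boundary is the difference of the two sides. The paper is simply more direct in execution, writing out $\partial_3=\partial_3^1+\partial_3^2$ explicitly and picking the specific element $((0,-1,0,\ldots,0),(1,0,\ldots,0))$, for which your anticipated telescoping in the $(1,1)$-summand is immediate.
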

	\begin{proof}
		The map $ a_{\xi_2}: S^{\xi_1}\to S^{\xi_1+\xi_2}  $ gives rise to the map
		\begin{align*}
			\pi_2(S^{\xi_1}\smas H\uA)\cong A(H_1)&\xrightarrow{a_{\xi_2}} 	\pi_2(S^{\xi_1+\xi_2}\smas H\uA)
		\end{align*}
		that sends the generator $ u_{\xi_1} $ of $ A(H_1) $ to  $u_{\xi_1}\cdot a_{\xi_2}  $. Similarly we get $u_{\xi_2}\cdot a_{\xi_1}  $.\par
		Next we compute $ \partial_3 $ in the complex \eqref{xi1xi2}.
Write $\partial_3= \partial_3^1+ \partial_3^2$ where $\partial_3^i$ is the map from the $i^{th}$-factor $A(\GG/H_1\times \GG/H_2)$. We have 
		\[
		\partial_3^1(x_1,\cdots, x_d)=\tr^{H_2}_{H_1\cap H_2}(x_1)+\cdots +\tr^{H_2}_{H_1\cap H_2}(x_d)- \{(x_1-x_2,\cdots, x_d-x_{1})\}.
		\]	
and
		\[
		\partial_3^2(y_1,\cdots, y_d)=\tr^{H_1}_{H_1\cap H_2}(y_1)+\cdots +\tr^{H_1}_{H_1\cap H_2}(y_d)+ \{(y_1-y_d,\cdots, y_d-y_{d-1})\}.
		\]	
		As a result 
		\begin{equation*}
			\partial_3((0,-1,\cdots, 0),(1,0,0,\cdots,0))=\{\tr^{H_1}_{H_1\cap H_2}(1)\cdot u_{\xi_1}\}a_{\xi_2}-\{\tr^{H_2}_{H_1\cap H_2}(1)\cdot u_{\xi_2}\}a_{\xi_1}.
		\end{equation*}
		Hence we have the relation \eqref{eq:au rel general}.
	\end{proof}
	\begin{rmk}\label{elemabrel}
		Elementary Abelian groups possess  other kinds of relations also. For example,  for $ i=1,2,3 $,  let $ \lambda_i  $ be a non-trivial complex 1-dimensional  representation of  $ \HH=C_p\times C_p $.	Then in $ \pi_{4}^{\HH }(S^{\lambda_1+\lambda_2+\lambda_3}\smas H\underline{\F_p}) $
		we have a relation 
		\begin{myeq}\label{eq:auu rel a coeff}
			a_{\lambda_1} u_{\lambda_2} u_{\lambda_3} -u_{\lambda_1} a_{\lambda_2}  u_{\lambda_3} + u_{\lambda_1} u_{\lambda_2} a_{\lambda_3} =0.
		\end{myeq}
		The proof is  similar to \eqref{eq:au rel general}. The map $ a_{\lambda_3}  $ results the map $ 	\pi_4(S^{\lambda_1+\lambda_2}\smas H\underline{\F_p})\cong \F_p\xrightarrow{a_{\lambda_3}} 	\pi_4(S^{\lambda_1+\lambda_2+\lambda_3}\smas H\underline{\F_p}) $
		which sends $ {u_{\lambda_1}u_{\lambda_2}} \mapsto { u_{\lambda_1}u_{\lambda_2} a_{\lambda_3}} $. Similarly we get the classes $ a_{\lambda_1} u_{\lambda_2} u_{\lambda_3}$ and $ u_{\lambda_1} a_{\lambda_2} u_{\lambda_3} $. Suppose $ \ker(\lambda_i)=K_i $.
		The equivariant chain complex  in $ \deg 5 $ and $ \deg 4 $ looks like 
		{ \small
			\begin{align*}
				(({\F_p})^p)^3        \cong			(\underline{\F_p}(\HH/K_{1}\times \HH/K_{2} \times  \HH/K_{3}))^3	\xrightarrow[]{\partial_5}
				&(\underline{\F_p}(\HH/K_{1}\times \HH/K_{2} \times  \HH/K_{3}))^3	\oplus \\
				&	\bigoplus_{(i j)\in \{(12),(13),(23)\}}\underline{\F_p}(\HH/K_{i}\times\HH/K_{j}) \cong ((\F_p)^p)^3  \oplus (\F_p)^3
		\end{align*}}
		Iterating the identification  \eqref{eq:isomorphic G set} we get an isomorphism  of  the following two $ \HH $-set
		\begin{equation*}
			\HH/K_1\times\HH/K_2 \times \HH/K_3
			\cong \HH/e\times |\HH/K_1|.
		\end{equation*}
		Applying  this isomorphism we obtain the following description of $ \partial_5 $
		\begin{equation*}
			\begin{split}
				\partial_5\big((x_1,\cdots, x_p), (y_1,\cdots,y_p), (z_1,\cdots, z_p)\big)= (x_1-x_2,\cdots, x_p-x_1)
				+	(y_1-y_p,\cdots, y_p-y_{p-1})\\	+(z_1-z_p,\cdots, z_p-z_{p-1})+{(z_1-z_2,\cdots, z_p-z_{1})}+\Sigma_i x_i +\Sigma_i y_i+\Sigma_i z_i
			\end{split}
		\end{equation*}
		Hence we have the relation  \eqref{eq:auu rel a coeff} which arises as image of $ \partial_5 $. This   phenomenon is also present in case of Burnside ring coefficients as well as integer coefficients.
	\end{rmk}
	\normalsize
	\begin{mysubsection}{Geometric fixed points over cyclic groups}\label{geofixpt}
		We begin with the definition of the geometric fixed points spectrum.
	\end{mysubsection}
	\begin{defn}
		Let  $ \PP $  be the family of all  subgroups $ H\lneq \GG $. The classifying space for the family $ \PP $, $ E\PP $   is characterised up to homotopy equivalence by the property  that $(E\PP)^H\simeq *$ for all $ H\in \PP $ and empty when $ H=\GG $. The space $ \widetilde{E\PP} $ is  characterized by the property that $ (\widetilde{E\PP}) ^H\simeq *$  for $ H\in \PP $ and $ S^0 $ for $ H=\GG $. This fits into the cofibre sequence  $ E\PP_+\to S^0 \to \widetilde{E\PP}$. The geometric fixed point spectrum is defined as 
		$$
		\Phi^\GG(H\uA) :=(\widetilde{E\PP}\smas H\uA)^\GG.
		$$
	\end{defn} 
	For the group $ G =C_n $  
	with $n={p_1^{n_1}\dots p_k^{n_k}} $, we have the following simple description of $ \widetilde{E\PP}\smas H\uA $.	For $ i=1 $ to $  k $, let $ H_i $ be  the subgroup of $ G $ of order $ |H_i|=n/p_i $.	Let  $ \lambda^{|H_i|} $ be the  $ G $-representation whose stabilizer subgroup is $ H_i $.
	\begin{align*}
		\widetilde{E\PP}\smas H\uA &\simeq S^{\infty\lambda^{|H_1|}}\smas\cdots\smas S^{\infty\lambda^{|H_k|}}\smas H\uA\\
		&\simeq \bigwedge_{i=1}^k {\textup{~cofibre~}}(S(\infty\lambda^{|H_i|})_+\to S^0)\smas H\uA,
	\end{align*}
	where  $ S(\infty\lambda^{|H_i|}) =\lim_{n\to \infty} S(n\lambda^{|H_i|})$.	Here note that 	$ S(\infty\lambda^{|H_i|}) $ serves as a model for the classifying space $ E\FF(H_i) $, where $ \FF(H_i) $ is  the family containing all subgroups of $ H_i $.\par
	Here we explore the  homotopy groups  of the geometric fixed point spectrum with Burnside ring coefficients  $ \uA $.  
	The ring  $ \pi_*(\Phi^{C_{2^n}}(H\uZ)) $ was studied in \cite[Prop. 3.18]{HHR16} and  $\pi_* (\Phi^{(C_2)^n}(H\F_2)) $ was determined in \cite[Thm. 2]{HK17}.\par
	To determine the  homotopy groups  of the geometric fixed point spectrum, we use the following  spectral sequence that converges to $\pi_*(\Phi^G(H\uA))$ and which arises from the following  filtration 
	\[ 
	F_0=S^0\smas H\uA ,
	\]
	\begin{equation*}
		F_r = \textup{~colim}_{\{i_1,\dots,i_j\}\in T_r, j\le r} \bigwedge_{\ell=1}^j {\textup{~cofibre~}}(S(\infty\lambda^{|H_{i_\ell}|})_+\to S^0)\smas H\uA
	\end{equation*}
	\begin{myeq}\label{eq:filt-r}
		F_r/F_{r-1} = \bigvee_{\{i_1,\dots,i_r\}\in T_r} \Sigma S(\infty\lambda^{|H_{i_1}|})_+\smas  \cdots\smas\Sigma S(\infty{\lambda}^{|H_{i_r}|})_+\smas H\uA,
	\end{myeq} 
	where $ T_r $ is the set that contains all cardinality $ r $ subsets of $ \{1,2,\dots, k\} $. Thus $ E^1_{0,*} \cong \pi_*(H\uA) $, so $  E^1_{0,0} =A(G)  $ and $ E^1_{0,t}=0 {\textup{~for~}} t>0 $. 
	For an element $ s\in T_r$ of the form $ \{i_1,\dots,i_r\}$, write $ \cap H_s= H_{i_1} \cap \cdots \cap H_{i_r}$.
	We  have the equivalence
	\begin{equation*}\label{eq: univ-clssifying-id}
		S(\infty\lambda^{|H_i|})\times S(\infty\lambda^{|H_j|}) \simeq S(\infty\lambda^{|H_i\cap H_j|}).
	\end{equation*}
	Applying this we have the following simplifications 
	\begin{myeq}\label{eq:geo ss}
		E^1_{r,t} =\pi_{r+t}^G(F_r/F_{r-1})=\bigoplus_{s\in T_r}\pi_t^G(S(\infty \lambda^{|\cap H_s|} )_+\smas H\uA).
	\end{myeq}
	In each degree $ m $, $ S(\infty \lambda^{|\cap H_s|} )_+ $ has a cell of  the form $ {G/\cap H_s}_+\smas D^m $. Hence the equivariant cellular chain complex  $ C_*^{{\textup{cell}} }(S(\infty \lambda^{|\cap H_s|} )_+ , \uA)$ takes the form  
	\begin{equation*}
		\cdots\xrightarrow{} A(\cap H_s) 	\xrightarrow{0} A(\cap H_s) \xrightarrow{[G:\cap H_s]}A(\cap H_s) \xrightarrow{0}A(\cap H_s).
	\end{equation*}
	Thus 
	\begin{equation*}
		\pi_*^G(S(\infty \lambda^{|\cap H_s|} )_+\smas H\uA)=\begin{cases}
			A(\cap H_s) & \text{if $ *=0 $}\\
			A(\cap H_s)/[G: \cap H_s] &\text{if $ *=\textup{odd}$}\\
			0 & \text{otherwise}.
		\end{cases}
	\end{equation*}
	 The $ d_1 $-differentials arise by collapsing one of the factors $ S(\infty{\lambda}^{|H_i|})_+ $ to $ S^0 $.  So on the $ E^1 $ page, the $ d_1 $-differential is given by 
	\begin{equation*}
		\bigoplus_{s\in T_r}\sum_{j=1}^{r} (-1)^{j}\tr_{\cap H_s}^{\cap H_{s\setminus \{j\}}}A(\cap H_s).
	\end{equation*}
	\textit{Step 1}: To determine  the $ E^2_{r,t} $ terms, let us  first  concentrate on the $ t=0 $ level where the chain complex looks like
	\begin{myeq}\label{eq:geo complex}
		A(H_1\cap\cdots\cap H_k)	\xrightarrow{} 	\cdots	\xrightarrow{}\bigoplus_{i,j\in\{1,\cdots,k\}} A(H_i\cap H_j) \xrightarrow{}\bigoplus_{i=1}^kA(H_i) \xrightarrow{}A(G).
	\end{myeq}
	For an element $ [\cap H_s] /K\in A(\cap H_s)$, the transfer is of the form $ [\cap H_{s\setminus\{j\}}/K] $.
	Let $ \HH $ be the collection of subgroups 
	\begin{myeq}\label{defkge}
		\HH:=\{K\leq G\mid K=H_{j_1}\cap \dots\cap H_{j_\ell} {\textup{~where~}} \{j_1,\dots, j_\ell\}\in \{1,\dots, k\}\}.
	\end{myeq}
	Let $ L $ be a subgroup of $ G $. Then $ L $ is contained in some $ K\in \HH $;  let order of  $ \mathcal{K}$ be minimum of such. Suppose $ \mathcal{K} $ can be written as an intersection of  $ \ell $ number of subgroups. The complex \eqref{eq:geo complex} is a direct sum of   sub-complexes (corresponding to each such subgroup L) of length $ \ell+1 $ of the form 
	\small
	\begin{equation*}
		C_*(\mathcal{K}/L): \quad \Z\{[\mathcal{K}/L]\}\rightarrow \dots \rightarrow \bigoplus_{i,k\in \{1,\dots, \ell \}} \Z\{[H_{j_i}\cap H_{j_k}/L]\}\rightarrow\bigoplus_{i\in \{1,\dots, \ell \}} \Z\{[H_{j_i}/L]\}\rightarrow  \Z\{[G/L]\}.
	\end{equation*}
	\normalsize
	Here  $ \Z\{[H/L]\} $ means a copy of $ \Z $  corresponding to the coset $ [H/L] $.	Thus we have a chain complex 
	\begin{equation*}
		\Z\rightarrow\dots \rightarrow \bigoplus_{\ell_{C_2}}\Z\rightarrow{}  \bigoplus_{\ell_{C_1}}\Z\rightarrow{}  \Z.
	\end{equation*}
	This sub-complex is equivalent to   the simplicial  chain complex of a simplex $ \triangle^{\ell-1} $ with vertices $ \{v_0,\dots, v_{\ell-1}\} $ augmented to $ \Z $. So the homology of this complex is $ 0 $ for all $ n\ge 0 $.  Let $ C_*(G/G) $ be the complex  which at $ n=0 $ is isomorphic to $ \Z$, generated by the element $ [G/G] \in A(G)$, and $ 0 $ for $ n>0 $. Then \eqref{eq:geo complex} is isomorphic to  the direct sum of the sub-complexes $C_*(G/G) \oplus \bigoplus_{\substack{L\in G, \mathcal{K}\in \HH}}C_*(\mathcal{K}/L) $. As a result $ E^2_{0,0}\cong \Z$ and $ E^2_{*,0}=0 $ for $ *>0 $.\par
	\textit{Step 2}: To determine $ E^2_{*,t} $, for $ t>0 $ we follow a similar approach. But note that these groups are torsion groups and for each prime $p$, the torsion subgroup is a $\Z/p$-module.  We restrict to the $ p $-torsion part for each prime $ p$ that divides the order of $ G$.  Consider a subgroup $L\lneq G$, and let $p=p_i$. Suppose the set of prime factors of  $[G:L]$ are 
	$$
	\{p_{j_1},\cdots,p_{j_\ell}\}\cup \{p_i\}.
	$$
	Then define  $S_L=\{p_{j_1},\cdots,p_{j_\ell}\}$. Then $\mathcal{K}$ must be of the form $H_{j_1}\cap \cdots\cap H_{j_\ell}\cap H_i$ (see \eqref{defkge}).
	\small 
	\normalsize
	Two possibilities  may arise:	$\mathit{(i)}$ $S_L\ne \varnothing$. This implies we have a sub-complex of the form 
	\small 
	\begin{equation*}
		\overline{C}_*(\mathcal{K}/L): \quad  \Z/p\{[\mathcal{K}/L]\}\rightarrow\dots \rightarrow \bigoplus_{r=1}^\ell \Z/p\{[H_{j_r}\cap H_{i}/L]\}\rightarrow  \Z/p\{[H_{i}/L]\}.
	\end{equation*}
	This sub-complex is equivalent to the chain complex of $ \triangle^{\ell-2} $ augmented to $\Z/p$.  $\mathit{(ii)}$ $S_L=\varnothing$. That is, $p_i$ is the only prime dividing $[G:L]$. So $L\cong H_i^t:=G/({p_i})^t$ for some $1\le t\le n_i$. Each of these gives a sub-complex $ \overline{C}_*(H_i/{H_{i}^t})$ which is isomorphic to $ \Z/p_i\{[H_i/{H_{i}^t}]\} $ and $ 0 $ for $ n>0 $. This gives an element in $ E^2_{1,1} $. Consequently, $ E^2_{*,q} =0$  for $ *>1 $, and  $ E^2_{1,q} \cong \oplus_{i=1}^k(\Z/p_i)^{n_i}$ if $ q $ is odd and $ 0 $ if $ q $ is even. Hence the spectral sequence collapses to $ E^2 $-page.\par 
	Next we determine the homotopy groups of the commutative ring spectrum 	$ \Phi^G(H\uA) $. 
	For this first we determine the ring structure of $ 	\pi_{*}^G(S^{\infty \lambda^{|H_i|}}\smas H\uA) $	  using  the Tate diagram.
	\begin{lemma}\label{lem:geo Tate}
		The integer graded  ring structure of $ \pi_*({S^{\infty \lambda^{|H_i|}}}\wedge H\uA ) $ is
		\begin{align*}
			\pi_*({S^{\infty \lambda^{|H_i|}}}\wedge H\uA )&\cong \frac{A(G)}{\langle  \tr_{H_i}^GA(H_i)  \rangle}\oplus \frac{A(H_i)}{p_i}\{\frac{u_{\lambda^{|H_i|}}^{k}}{a_{\lambda^{|H_i|}}^k}\}, k>0.
		\end{align*} 
	\end{lemma}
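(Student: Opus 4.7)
The strategy is to apply the Tate diagram method from the introduction. I would begin by smashing the cofibre sequence
$$S(\infty\lambda^{|H_i|})_+ \to S^0 \to S^{\infty\lambda^{|H_i|}}$$
with $H\uA$ and taking $\pi_*^G$, producing a long exact sequence. The middle term $\pi_*^G(H\uA)$ is concentrated in degree $0$, where it equals $A(G)$. The left term has already been analyzed in the discussion preceding the lemma, yielding $A(H_i)$ in degree $0$, $A(H_i)/p_i$ in every positive odd degree, and zero otherwise; this comes from the cellular chain complex $\cdots A(H_i)\xrightarrow{0} A(H_i) \xrightarrow{p_i} A(H_i)\xrightarrow{0} A(H_i)$ of the free $G/H_i$-action on $S(\infty\lambda^{|H_i|})$.

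Next, I would identify the connecting map $\pi_0^G(S(\infty\lambda^{|H_i|})_+ \wedge H\uA) \to \pi_0^G(H\uA)$ with the transfer $\tr^G_{H_i}:A(H_i) \to A(G)$. Since on the canonical bases this sends $[H_i/K]$ to $[G/K]$ for distinct $K\leq H_i$, landing on distinct basis elements of $A(G)$, it is injective. The long exact sequence then collapses to yield additively
$$\pi_0^G(S^{\infty\lambda^{|H_i|}}\wedge H\uA) \cong \frac{A(G)}{\tr^G_{H_i}(A(H_i))},\qquad \pi_{2k}^G(S^{\infty\lambda^{|H_i|}}\wedge H\uA) \cong \frac{A(H_i)}{p_i} \text{ for } k\geq 1,$$
with all odd and all negative homotopy groups vanishing.

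For the multiplicative description, I would use that $S^{\infty\lambda^{|H_i|}}\wedge H\uA$ is the Bousfield localization of $H\uA$ inverting the Euler class $a := a_{\lambda^{|H_i|}}$, and hence inherits a commutative ring structure in which all classes of $\pi_\bigstar^G(H\uA)$ persist. The $a$-relation \eqref{eq:au general}, $I_{\tr^G_{H_i}}\cdot a = 0$, becomes $I_{\tr^G_{H_i}} = 0$ in the localized ring, which matches the degree-zero quotient. The orientation class $u := u_{\lambda^{|H_i|}}\in \pi_{2-\lambda^{|H_i|}}^G(H\uA)\cong A(H_i)$ yields $u^k/a^k \in \pi_{2k}^G$, where $u^k = u_{k\lambda^{|H_i|}}$ by the product formula from the discussion after \eqref{xi1xi2}. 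The $u$-relation \eqref{eq: u-relation for A-coeff}, $(\alpha_{p_i}-p_i)u = 0$, combined with $\alpha_{p_i} = \tr^G_{H_i}(1) = 0$ in the localization, then gives $p_i u = 0$, so $p_i\cdot u^k/a^k = 0$ in every degree $2k$.

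The main obstacle is verifying that $u^k/a^k$ together with its $[H_i/K]$-multiples actually realizes the full cyclic $A(H_i)/p_i$ in degree $2k$, i.e.\ that $u/a$ corresponds to $1\bmod p_i$ under the additive identification above rather than to some nontrivial multiple. I would handle this by comparing the finite Gysin sequence $S(\lambda^{|H_i|})_+ \to S^0 \to S^{\lambda^{|H_i|}}$ with its infinite analogue: the definition \eqref{eq:defining u-class} characterizes $u$ as the class mapping to $1\in A(H_i)$ under the first-stage connecting isomorphism, and naturality of the Gysin sequences transports this identification along the colimit $S^{n\lambda^{|H_i|}} \to S^{\infty\lambda^{|H_i|}}$. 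Iterating the ring product $u^k/a^k = (u/a)^k$ then establishes the claim for all $k \geq 1$.
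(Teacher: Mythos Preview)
Your argument is correct but takes a different route from the paper's. The paper runs the full Tate square: it first computes the homotopy fixed point spectrum $F(S(\infty\lambda^{|H_i|})_+,H\uA)$ via the homotopy fixed point spectral sequence, obtaining $A(H_i)[a_{\lambda^{|H_i|}},u_{\lambda^{|H_i|}}^{\pm}]/(p_i a_{\lambda^{|H_i|}})$ in the gradings $c_0+c_i\lambda^{|H_i|}$; then inverts $a_{\lambda^{|H_i|}}$ to get the Tate spectrum $\Z/p_i[a^{\pm},u^{\pm}]$; then reads off the Borel homology from the bottom cofibre sequence; and only then feeds this into the top cofibre sequence to assemble the answer. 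The advantage is that the multiplicative generators $a,u$ are carried along throughout, so the ring description falls out without a separate argument.

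You instead bypass the bottom row entirely and use only the top cofibre sequence, feeding in the cellular Borel homology computation that the paper already recorded just before the lemma. This is more economical additively --- no spectral sequence is invoked --- but it forces you to supply the multiplicative identification by hand, which is what your final paragraph does via comparison with the finite Gysin sequence and the defining property \eqref{eq:defining u-class} of $u_{\lambda^{|H_i|}}$. (A minor remark: despite your opening sentence, you are not really applying the Tate diagram method, only the top cofibre sequence; the Tate diagram proper is the paper's route.) Both approaches are sound; the paper's packages the ring structure with the input, while yours trades the spectral sequence for a direct cellular argument plus a small naturality check.
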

	\begin{proof} Consider the diagram
		\begin{equation*}
			\begin{tikzcd}[scale cd=.92]
				{S(\infty \lambda^{|H_i|})}_+\wedge H\uA\arrow[r,""]\arrow["\simeq"]{d} & H\uA  \arrow[r,""]\arrow[""]{d} & {S^{\infty \lambda^{|H_i|}}}\wedge H\uA\arrow[""]{d}
				\\
				{S(\infty \lambda^{|H_i|})}_+\wedge F(S(\infty \lambda^{|H_i|})_+,H\uA)\arrow[r,""] &  F({S(\infty \lambda^{|H_i|})}_+,H\uA) \arrow[r,""] & {S^{\infty \lambda^{|H_i|}}}\wedge F({S(\infty \lambda^{|H_i|})}_+,H\uA) 
			\end{tikzcd}
		\end{equation*}
		The homotopy groups of $F({S(\infty \lambda^{|H_i|})}_+,H\uA)$  may be computed using the homotopy fixed point spectral sequence \cite[Proposition 2.8]{HM17} which  collapses at the $ E^2 $-page. As a result 
		\[
		\pi^G_{c_0+c_i\lambda^{|H_i|}}(F({S(\infty \lambda^{|H_i|})}_+,H\uA) )=A(H_i)[a_{\lambda^{|H_i|}},u_{\lambda^{|H_i|}}^{\pm}]/(p_ia_{\lambda^{|H_i|}}).
		\]
		Since $ {S^{\infty \lambda^{|H_i|}}}=\lim_{n\to \infty}S^{n{\lambda^{|H_i|}}} $, inverting the class $ a_{\lambda^{|H_i|} }$, we get
		\[
		\pi^G_{c_0+c_i\lambda^{|H_i|}}({S^{\infty \lambda^{|H_i|}}}\wedge F({S(\infty \lambda^{|H_i|})}_+,H\uA) )=\Z/p_i[a_{\lambda^{|H_i|}}^\pm,u_{\lambda^{|H_i|}}^{\pm}].
		\]
		By computing the kernel  and cokernel of the boundary  map
		\[
		\pi^G_{c_0+c_i\lambda^{|H_i|}}(F({S(\infty \lambda^{|H_i|})}_+,H\uA))\to \pi^G_{c_0+c_i\lambda^{|H_i|}}({S^{\infty \lambda^{|H_i|}}}\wedge F({S(\infty \lambda^{|H_i|})}_+,H\uA)) 
		\] 
		we see that 
		\[
		\pi^G_{c_0+c_i\lambda^{|H_i|}}({S(\infty \lambda^{|H_i|})}_+\wedge H\uA )=\oplus A(H_i){\{p_i u_{\lambda^{|H_i|}}^m\}} \oplus A(H_i)/p_i\{\Sigma^{-1}\frac{u_{\lambda^{|H_i|}}^{\ell}}{a_{\lambda^{|H_i|}}^k}\}, ~~~m, \ell\in \Z, k>0.
		\]
		In degree $ 0 $, the top cofibre sequence results the following  extension 
		\begin{equation*}
			A(H_i)\{p_i\}\to A(G)\to A(G)/\tr_{H_i}^G A(H_i).
		\end{equation*}
		Hence, the integer graded  ring structure is 
		\begin{align*}
			\pi_*({S^{\infty \lambda^{|H_i|}}}\wedge H\uA )&\cong \frac{A(G)}{\langle  \tr_{H_i}^GA(H_i)  \rangle}\oplus \frac{A(H_i)}{p_i}\{\frac{u_{\lambda^{|H_i|}}^{k}}{a_{\lambda^{|H_i|}}^k}\}, k>0.
		\end{align*} 
	\end{proof}

By Lemma \ref{lem:geo Tate} we get
	\begin{equation*}
		\pi_{*}^G(S^{\infty \lambda^{|H_i|}}\smas H\uA)\cong  \frac{A(G)}{\langle  \tr_{H_i}^G(A(H_i))  \rangle}\oplus \bigoplus_{t>0} \frac{A(H_i)}{p_i}\{(u'_i/a'_i)^t\},
	\end{equation*}
	where $ \deg  u'_i/a'_i=2 $. 
	Moreover, for each $ i $ we have the natural map of $ A
	(G) $-algebras 
	\begin{myeq}\label{eq:geo ring map}
		\psi_i:	\pi_{*}^G(S^{\infty \lambda^{|H_i|}}\smas H\uA)\to \pi_{*}^G(S^{\infty \lambda^{|H_1|}}\smas \cdots \smas S^{\infty \lambda^{|H_k|}}\smas H\uA)=\pi_*(\Phi^G(H\uA))
	\end{myeq}
	arising by inclusion. We may compare the corresponding spectral sequences of $ A(G) $-modules (as given in \eqref{eq:geo ss}) 
	to deduce that
	\begin{equation*}
		\psi_i(u'_i/a'_i)= \textup{~class represented by~} [H_i/H_i] \textup{~in~} E^2_{1,1}. 
	\end{equation*}
	We denote the image of $ \psi_i $ by $  u_{H_i}/a_{H_i}$. 
	Note that $[H_i/K]$ lies in the image of some $\tr^{H_i}_{H_i\cap H_j}$ unless $K=H_i^t$ for $[G/H_i^t]=p^t$ for some $1\le t\le n_0$.	Now using the ring map \eqref{eq:geo ring map} we get the following description of $ 	\pi_*(\Phi^{G}(H\uA)) $.  
	\begin{thm}\label{thm:geo} Let  $ G=C_n $ with $n={p_1^{n_1}\dots p_k^{n_k}} $.  For $ i=1 $ to $  k $, let $ H_i $ be  the subgroup of $ G $ of order $ |H_i|=n/p_i $. Then we have
		\begin{equation*}
			\pi_*(\Phi^{G}(H\uA))\cong \Z\{[G/G]\}\oplus \bigoplus^k_{\ell=1} \bigoplus_{t>0}\frac{A(H_\ell)}{\langle   I_{H_\ell}, p_\ell \rangle}\{(u_{H_\ell}/a_{H_\ell})^t\}
		\end{equation*}
		where $p_\ell=[G:H_\ell]$ and  the ideal $ I_{H_\ell} $  consists  the transfers
		\begin{equation*}
			\big \langle  \tr_{H_i\cap H_\ell}^{H_\ell} A(H_i\cap H_\ell) \mid 1\le i\le k, i\ne \ell\big\rangle. 
		\end{equation*}
	\end{thm}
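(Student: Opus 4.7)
The plan is to combine the spectral sequence analysis already performed in Steps 1--2 (which established the additive structure of $\pi_*\Phi^{G}(H\uA)$) with the ring maps $\psi_\ell$ of \eqref{eq:geo ring map} and with Lemma \ref{lem:geo Tate} to pin down the multiplicative generators and the relations. Steps 1 and 2 give $E^2_{0,0}\cong \Z$ generated by $[G/G]$, $E^2_{*,0}=0$ for $*>0$, and $E^2_{1,q}\cong \bigoplus_{i=1}^k (\Z/p_i)^{n_i}$ for $q$ odd (zero otherwise). Since the spectral sequence collapses at $E^2$, this already exhibits the additive group in each degree; only the identification of these classes as $(u_{H_\ell}/a_{H_\ell})^t$ and the determination of the relations remains.

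First I would produce the multiplicative classes. By Lemma \ref{lem:geo Tate}, in $\pi_*(S^{\infty \lambda^{|H_\ell|}}\wedge H\uA)$ the summand of degree $2t>0$ is $A(H_\ell)/p_\ell\{(u'_\ell/a'_\ell)^t\}$. Applying the $A(G)$-algebra map $\psi_\ell$ of \eqref{eq:geo ring map} yields the class $u_{H_\ell}/a_{H_\ell}\in \pi_2\Phi^{G}(H\uA)$, and hence $(u_{H_\ell}/a_{H_\ell})^t\in \pi_{2t}\Phi^{G}(H\uA)$ for every $t>0$, each generating an $A(H_\ell)/p_\ell$-module. The $p_\ell$-torsion relation is therefore built in.

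Second I would identify the full kernel as $\langle I_{H_\ell},p_\ell\rangle$. The geometric input is that $\Phi^{G}(H\uA)\simeq (S^{\infty\lambda^{|H_1|}}\wedge\cdots\wedge S^{\infty\lambda^{|H_k|}}\wedge H\uA)^G$, and for $j\neq \ell$ the factor $S^{\infty\lambda^{|H_j|}}$ kills every class supported on cells whose stabilizer is contained in $H_j$. Consequently, for any $K\le H_i\cap H_\ell$ with $i\neq \ell$, the element $\tr^{H_\ell}_K(x)\cdot(u_{H_\ell}/a_{H_\ell})^t$ becomes zero after smashing with $S^{\infty\lambda^{|H_i|}}$, since it is represented on the $E^1$-page \eqref{eq:geo ss} by a summand $A(\cap H_s)$ with $i\in s$ that dies under the $d_1$-differential coming from the inclusion of that $S(\infty\lambda^{|H_i|})_+$ factor. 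This shows the ideal $I_{H_\ell}=\langle \tr^{H_\ell}_{H_i\cap H_\ell} A(H_i\cap H_\ell)\mid i\neq \ell\rangle$ indeed acts trivially.

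Finally I would match dimensions to conclude there are no further relations. Since $H_\ell$ is cyclic, a basis element $[H_\ell/K]\in A(H_\ell)$ survives modulo $\langle I_{H_\ell},p_\ell\rangle$ precisely when $K$ is not contained in any $H_i\cap H_\ell$ for $i\neq \ell$, which forces $p_i^{n_i}\mid |K|$ for all $i\neq \ell$. The admissible $K$'s are thus the $n_\ell$ subgroups of $H_\ell$ whose index is a power of $p_\ell$, giving $A(H_\ell)/\langle I_{H_\ell},p_\ell\rangle\cong \F_{p_\ell}^{n_\ell}$. Summed over $\ell$ and $t$, this reproduces the $E^2_{1,q}\cong \bigoplus_{i=1}^k(\Z/p_i)^{n_i}$ count in every odd $q$, which together with $\Z\{[G/G]\}$ in degree $0$ exhausts the additive answer, so the relations already listed are complete. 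The main obstacle is the vanishing step of the previous paragraph: rigorously matching the transfer classes in $A(H_\ell)$ with $d_1$-boundaries or, equivalently, with classes killed by the isotropy separation with respect to $S^{\infty\lambda^{|H_i|}}$ for $i\neq\ell$; this requires bookkeeping on the $E^1$-page using the identification of $A(H_\ell)$-module generators $[H_\ell/K]$ with transfers coming from summands indexed by $s\ni i$ in \eqref{eq:geo ss}.
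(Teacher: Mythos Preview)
Your proposal is correct and follows essentially the same approach as the paper: both use the spectral sequence collapse from Steps~1--2 for the additive structure, invoke Lemma~\ref{lem:geo Tate} and the ring maps $\psi_\ell$ of \eqref{eq:geo ring map} to produce the classes $(u_{H_\ell}/a_{H_\ell})^t$, and then identify the surviving $A(H_\ell)$-module generators as exactly the $[H_\ell/K]$ with $[H_\ell:K]$ a power of $p_\ell$, yielding the $\F_{p_\ell}^{n_\ell}$ count that matches $E^2_{1,q}$. The paper handles your ``main obstacle'' slightly more directly: rather than arguing that transfer classes are killed by smashing with $S^{\infty\lambda^{|H_i|}}$, Step~2 already records that the only surviving subcomplexes at $E^2_{1,*}$ are the $\overline{C}_*(H_\ell/H_\ell^t)$, and the paper then simply notes that $[H_\ell/K]$ lies in some $\tr^{H_\ell}_{H_i\cap H_\ell}$ unless $K=H_\ell^t$, so your dimension-matching step is already built into the spectral sequence computation rather than needing a separate vanishing argument.
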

	\begin{rmk}
		While $ \Phi^G(H\uZ) =0 $ if $ G $ is not a $ p $-group 
		(see \cite[Prop. 11]{Kri20}),   however with $ \uA $-coefficient $ \Phi^G(H\uA)  $ is non-trivial by Theorem \eqref{thm:geo}.
	\end{rmk}
	\section{Computations for the Burnside ring}\label{ringstrp}
	We show that the restriction to divisor gradings yields a simple formula in the case $G=C_p$ (compare with \cite{Lew88}) at odd primes $p$. A similar calculation may also be performed for $p=2$, but we avoid it here as in the case of $C_2$, the restriction to divisor gradings does not yield any simplification. The formula in divisor grading also help in computing the additive structure by taking the box product. 
	
	\begin{mysubsection}{Divisor gradings for $C_p$}\label{cpcase}
		We determine the ring $ \pi_{\bigstar_{\pm\textup{div}}}(H\uA) $ by using Tate square. In the case of the spectrum $ H\uZ $, this was determined for the group $ C_2 $  in \cite[Cor. 2.6]{Gre18}, and for $ C_p $ in \cite[Prop. 6.3]{Zen18}. For the spectrum $ H\underline{\F_p} $, this appears in \cite[Prop. 3.6 \& 3.8 ]{BG21a}.\par 
		The Tate diagram for $ H\uA $ is the following 
		\begin{equation*}
			\begin{tikzcd}[scale cd=.92]
				{EC_p}_+\wedge H\uA\arrow[r,""]\arrow["\simeq"]{d} & H\uA  \arrow[r,""]\arrow[""]{d} & \widetilde{EC_p}\wedge H\uA\arrow[""]{d}
				\\
				{EC_p}_+\wedge F(E{C_p}_+,H\uA)\arrow[r,""] &  F({EC_p}_+,H\uA) \arrow[r,""] & \widetilde{EC_p}\wedge F({EC_p}_+,H\uA) 
			\end{tikzcd}
		\end{equation*}
		In this case the homotopy fixed point spectral sequence \cite[Prop. 2.8]{HM17} collapses at the $ E^2 $-page and we obtain that  
		\[
		\pi_{\bigstar_{\pm \textup{div}}}(F({EC_p}_+,H\uA) )=\Z[a_\lambda,u_{\lambda}^{\pm}]/(pa_\lambda).
		\]
		Since $ \widetilde{EC_p}=\lim_{n\to \infty}S^{n\lambda} $, inverting the class $ a_\lambda $, we get
		\[
		\pi_{\bigstar_{\pm \textup{div}}}(\widetilde{EC_p}\wedge F({EC_p}_+,H\uA) )=\Z/p[a_\lambda^\pm,u_{\lambda}^{\pm}].
		\]
		By computing the kernel  and cokernel of the boundary  map
		\[ 
		\pi_{\bigstar_{\pm \textup{div}}}(F({EC_p}_+,H\uA))\to \pi_{\bigstar_{\pm \textup{div}}}(\widetilde{EC_p}\wedge F({EC_p}_+,H\uA)) 
		\] 
		we see that 
		\[ 
		\pi_{\bigstar_{\pm \textup{div}}}({EC_p}_+\wedge H\uA )=\oplus \Z{\{p u_\lambda^m\}} \oplus A/(\alpha-p,p)\{\Sigma^{-1}\frac{u_{\lambda}^{\ell}}{a_\lambda^k}\}, ~~~m, \ell\in \Z, k>0.
		\]
		Here $ \alpha=\alpha_p =[C_p/e] $. Since $ \pi_{\bigstar_{\pm \textup{div}}}(\widetilde{EC_p}_+\wedge H\uA) $ is $ a_\lambda $-periodic, its value can be determined from the integer graded homotopy groups $ \pi_n(\widetilde{EC_p}_+\wedge H\uA ) $.  Then in degree $ 0 $, we have an extension 
		$0\to \alpha\to A(C_p)\to A(C_p)/\alpha  \to 0$. As  a result
		\begin{align*}
			\pi_*(\widetilde{EC_p}_+\wedge H\uA )&= A(C_p)/\alpha\oplus A(C_p)/(\alpha-p,p)\{\frac{u_{\lambda}^k}{a_\lambda^k}\}, \textup{~where~}  k>0 \\
			&\cong A(C_p)\Big[\frac{u_{\lambda}}{a_\lambda}\Big]/(\alpha,p\frac{u_{\lambda}}{a_\lambda}).
		\end{align*} 
		Therefore, we have
		\[ 
		\pi_{\bigstar_{\pm \textup{div}}}(\widetilde{EC_p}_+\wedge H\uA)=  A(C_p)[u_{\lambda}, a_\lambda^\pm]/(\alpha,pu_{\lambda}).
		\]
		In order to determine $ \pi_{\bigstar_{\pm \textup{div}}}(H\uA)  $, consider the connecting homomorphism
		\[ 
		\pi_{\bigstar_{\pm \textup{div}}}(\widetilde{EC_p}_+\wedge H\uA) \to \pi_{\bigstar_{\pm \textup{div}}-1}({EC_p}_+\wedge H\uA).
		\]
		We see for $ j,k>0 $, the elements 
		$ A(C_p)/(\alpha,p)\{\Sigma^{-1}\frac{1}{u_\lambda^ja_\lambda^k}\} $ 
		lies in the cokernel. Further the elements $ A(C_p)/{(\alpha,pu_{\lambda})}\{u_\lambda^s\mspace{1mu} a_\lambda^t\} $ for $ s,t\ge0 $ and $  A(C_p)/\alpha\, \{pa_\lambda^{-k}\}, k>0$ lies in kernel.
		If $ m> 0 $, we  have the following non-trivial extension
		\[ 
		0\to \Z\{pu_\lambda^m\}\to A(C_p)/(\alpha-p)\{u_\lambda^m\} \to A(C_p)/(\alpha,p)\{u_\lambda^m\}\to 0 
		\]
		and if $ m<0$ then we have the following
		\[ 
		\Z\{pu_\lambda^m\}\xrightarrow{~\cong~} A(C_p)/(\alpha-p)\{{\alpha u_\lambda^m}\}.  
		\]
		\[ 
		\pi_{\bigstar_{\pm \textup{div}}}(H\uA) =  A(C_p)[u_{\lambda}, a_\lambda]/\big(\alpha a_\lambda,(\alpha-p)u_{\lambda}\big)\oplus A(C_p)/(\alpha-p)\{{\alpha u_\lambda^{-l}}\}
		\]
		\[
		\hspace{2cm}\oplus A(C_p)/\alpha\{p\mspace{1mu} a_\lambda^{-k}\}\oplus A(C_p)/(\alpha,p)\{\Sigma^{-1}\frac{1}{u_\lambda^ja_\lambda^k}\}.
		\]

	\end{mysubsection}
	
	\begin{mysubsection}{The additive structure in mostly zero cases for $C_{pq}$}\label{mostlyzerocase}
		The Mackey functors $ \underline{\pi}_\alpha^{C_p}(H\uA)$ depend only on the fixed point dimensions  except in the cases when both $|\alpha| = |\alpha^{C_p}|=0$ \cite{Lew88}. Using the box product with invertible Mackey functors $A[\tau ] $ of \S \ref{invertiblemod}, one may determine the additive structure of $\upi^G_\alpha(H\uA)$ 	if the value of $\upi_\beta^G(H\uA)$ is known for $\beta\in \bigstar_{\pm \textup{div}}$ and $|\alpha^H|=|\beta^H|$ for all subgroups $H$. We use this idea to demonstrate the additive structure for $G=C_{pq}$ in the cases left unresolved in \cite{BG19}.
	\end{mysubsection}
	\begin{defn}
		An $ \alpha\in RO({C_n}) $  is said to be \textit{non-zero} if all the fixed point dimensions  $ |\alpha^{C_d}| $ are non-zero, and \textit{mostly non-zero} if  $ |\alpha^{C_d}|=0 $ implies $ |\alpha^{C_{dp}}|\neq 0 $ for all $ p $ dividing $ n $. We say $ \alpha $ has \textit{many-zeros} if it is not any of the above. 
	\end{defn}
	For square free $n$, the Mackey functor valued cohomology groups   $ \underline{\pi}_\alpha^{C_n}(H\uA) $ in the case $ \alpha $  non-zero or mostly non-zero,  and the  group structure  $ \pi_\alpha^{C_n}(H\uA) $ for $\alpha$ with many-zeros   were computed in \cite[Theorem 6.19 and Theorem 6.26]{BG20}.\par
	We show how to determine the Mackey functors using the box product, when $n=pq$ with both $p,q$ odd primes. Up to symmetry  between $p$ and $q$, $\alpha\in RO(C_{pq})$  with many-zeros fit into one of the following cases
	\begin{enumerate}
		\item[1.] $|\alpha^H|=0$ for all $H\le C_{pq}$. 
		\item [2.] $|\alpha^H|= 0$ for $H\in \{C_p,C_q,C_{pq}\}$, and  $|\alpha|\ne 0$.
		\item [3.] $|\alpha^H|= 0$ for $H\in\{e,C_q,C_{pq}\}$, and  $|\alpha^{C_p}|\ne 0$.
		\item[4.] $|\alpha^H|= 0$ for $H\in\{e, C_p,C_q\}$, and  $|\alpha^{C_{pq}}|\ne 0$. 
		\item[5.] $|\alpha^H|= 0$ for $H\in \{C_q, C_{pq}\}$, and  $|\alpha|\ne 0, |\alpha^{C_p}|\ne 0$.  
		\item [6.] $|\alpha^H|= 0$ for $H\in \{e, C_q\}$, and  $|\alpha^{C_p}|\ne 0, |\alpha^{C_{pq}}|\ne 0$.
	\end{enumerate}
	The ingredients used for our computations are as follows. Suppose $\uM=\upi^{C_{pq}}_\alpha(H\uA)$. The information of the groups  $\underline{M}(C_{pq}/C_p), \underline{M}(C_{pq}/e) $ along with $\res^{C_p}_e$, $\tr^{C_p}_e$  maps between them can be derived from   $\downarrow^{C_{pq}}_{C_p}\upi^{C_{pq}}_{\alpha}(H\uA)\cong \upi^{C_p}_{\alpha}(H\uA)$. The structure of the latter Mackey functor is known by \cite{Lew88} (see \S  \ref{genBurnmf}).  Similarly, $\downarrow^{C_{pq}}_{C_q}\upi^{C_{pq}}_{\alpha}(H\uA)\cong \upi^{C_q}_{\alpha}(H\uA)$ determines  $\underline{M}(C_{pq}/C_q), \underline{M}(C_{pq}/e) $ together with $\res^{C_q}_e$, $\tr^{C_q}_e$. Here for $\uM\in \mathcal{M}_G$,  $\downarrow^G_H \uM(S):=\uM(G\times_H S )$, where $H\le G$ and $S$ is an $H$-set. We also use \cite{BG19}.\par
	Next we write down the additive structure in each case.\\
	\textbf{Case 1}: This is already covered by Theorem \ref{mfallze}. Following \S \ref{genBurnmf} we see that $\tau(\alpha)$ is determined by integers $\tau _p, \tau _q, \mbox{ and } \tau _e$. Hence $ \upi_\alpha(H\uA) $ is
	\small
	\[
	\xymatrix{ & & \Z^4\ar@/_1.1pc/[dll]_{{ \left[\begin{smallmatrix}	\tau_p & q & 0 & 0 \\	0 & 0 & \tau_p\tau_e  & q	\end{smallmatrix}\right]}}   \ar@/^1.1pc/[drr]^{{ \left[\begin{smallmatrix} \tau_q & 0 & p & 0 \\ 0  & \tau_q\tau_e & 0 & p \end{smallmatrix}\right]}}  & & \\ 
		\Z^2 \ar@/_.7pc/[urr]|{{\tiny \left[\begin{smallmatrix}	 0 & 0 \\1 & 0 \\ 0 & 0\\ 0 & 1  \end{smallmatrix}\right]}} \ar@/_1.1pc/[drr]_{{ \left[\begin{smallmatrix}   \tau_q\tau_e & p \end{smallmatrix}\right]}} & & & & \Z^2 \ar@/^.7pc/[ull]|{{\tiny \left[\begin{smallmatrix} 0 & 0 \\0 & 0 \\ 1 & 0\\ 0 & 1  \end{smallmatrix}\right]}} \ar@/^1.1pc/[dll]^{{ \left[\begin{smallmatrix} \tau_p\tau_e & q \end{smallmatrix}\right]}} \\ 
		&  &  \Z \ar@/_.7pc/[ull]|{{\tiny \left[\begin{smallmatrix} 0 \\ 1 \end{smallmatrix}\right]}} \ar@/^.7pc/[urr]|{{\tiny \left[\begin{smallmatrix} 0 \\ 1 \end{smallmatrix}\right]}}&&}
	\]
	\normalsize
	\textbf{Case 2}: In this case, there exists a $k\ne 0$ so that  $\alpha-k\lambda\in RO_0{(C_{pq})}$, and thus,  
	\begin{myeq}\label{boxuni}
		\upi_\alpha (H\uA)\cong \upi_{k \lambda}H\uA\Box\upi_{\alpha-k \lambda}H\uA.
	\end{myeq}
	Note that for $k>0$, $\upi_{-k\lambda}(H\uA)\cong \upi_{-\lambda}(H\uA) =\textup{Coker}(\uZ^\ast\to \uA):=\underline{C}$ and  $\upi_{k\lambda}(H\uA)\cong \textup{Ker}(\uA\to \uZ):=\underline{K}$. These works out to be 
	\small
	\[
	\xymatrix{\\\underline{C}:}
	\xymatrix{ & &\Z^3\ar@/_1pc/[dll]_{{ \left[\begin{smallmatrix} 1 &q &0 \end{smallmatrix}\right]}}   \ar@/^1pc/[drr]^{{ \left[\begin{smallmatrix} 1 & 0 &p   \end{smallmatrix}\right]}}  & &\\ 
		\Z \ar@/_.7pc/[urr]|{{\tiny \left[\begin{smallmatrix} 0  \\ 1 \\ 0  \end{smallmatrix}\right]}} \ar@/_1pc/[drr] & & & &\Z \ar@/^.7pc/[ull]|{{\tiny \left[\begin{smallmatrix} 0 \\ 0 \\ 1  \end{smallmatrix}\right]}} \ar@/^1pc/[dll] \\ 
		&  &  0 \ar@/_.7pc/[ull] \ar@/^.7pc/[urr]}
	\hspace{1.5cm}
	\xymatrix{\\\underline{K}:}
	\xymatrix{ & &\Z^3\ar@/_1pc/[dll]_{{ \left[\begin{smallmatrix} \gamma_p&q &0 \end{smallmatrix}\right]}}   \ar@/^1pc/[drr]^{{ \left[\begin{smallmatrix} \gamma_q & 0 &p   \end{smallmatrix}\right]}}  & &\\ 
		\Z \ar@/_.7pc/[urr]|{{\tiny \left[\begin{smallmatrix} 0  \\ 1 \\ 0  \end{smallmatrix}\right]}} \ar@/_1pc/[drr] & & & &\Z \ar@/^.7pc/[ull]|{{\tiny \left[\begin{smallmatrix} 0 \\ 0 \\ 1  \end{smallmatrix}\right]}} \ar@/^1pc/[dll] \\ 
		&  &  0 \ar@/_.7pc/[ull] \ar@/^.7pc/[urr]}
	\]
	\normalsize
	where  $\gamma_p$ and $\gamma_q$ satisfies $\gamma_p p+\gamma_qq=1$.
 By taking box product with $\underline{C}$ or with $\underline{K}$, we find $\upi_\alpha(H\uA)$ to be one of the following 
	\[
	\xymatrix{ & &\Z^3\ar@/_1pc/[dll]_{{ \left[\begin{smallmatrix} \tau _p &q &0 \end{smallmatrix}\right]}}   \ar@/^1pc/[drr]^{{ \left[\begin{smallmatrix} \tau _q & 0 &p   \end{smallmatrix}\right]}}  & &\\ 
		\Z \ar@/_.7pc/[urr]|{{\tiny \left[\begin{smallmatrix} 0  \\ 1 \\ 0  \end{smallmatrix}\right]}} \ar@/_1pc/[drr] & & & &\Z \ar@/^.7pc/[ull]|{{\tiny \left[\begin{smallmatrix} 0 \\ 0 \\ 1  \end{smallmatrix}\right]}} \ar@/^1pc/[dll] \\ 
		&  &  0 \ar@/_.7pc/[ull] \ar@/^.7pc/[urr]}
	\hspace{2cm}
	\xymatrix{ & &\Z^3\ar@/_1pc/[dll]_{{ \left[\begin{smallmatrix} \tau _p\gamma_p&q &0 \end{smallmatrix}\right]}}   \ar@/^1pc/[drr]^{{ \left[\begin{smallmatrix}  \tau _q\gamma_q & 0 &p   \end{smallmatrix}\right]}}  & &\\ 
		\Z \ar@/_.7pc/[urr]|{{\tiny \left[\begin{smallmatrix} 0  \\ 1 \\ 0  \end{smallmatrix}\right]}} \ar@/_1pc/[drr] & & & &\Z \ar@/^.7pc/[ull]|{{\tiny \left[\begin{smallmatrix} 0 \\ 0 \\ 1  \end{smallmatrix}\right]}} \ar@/^1pc/[dll] \\ 
		&  &  0 \ar@/_.7pc/[ull] \ar@/^.7pc/[urr]}
	\]
depending on whether $|\alpha|<0$ or $>0$. Here $\tau_p, \tau_q, \tau_e$ are integers that determine $\tau(\alpha - k \lambda)$. \\
	\textbf{Case 3}: Again  there exists $k\ne 0$ so that $\alpha-k(\lambda-\lambda^p)\in RO_0({C_{pq}})$. For $k>0$, let $\underline{L}^{(k)}_1:=\upi_{k\lambda^p-k\lambda}(H\uA)$  and  $\underline{L}^{(k)}_2:=\upi_{k\lambda-k\lambda^p}(H\uA)$. This fits into
	\begin{equation*}
		{%
			\xymatrix@R=0.5cm@C=0.5cm{
				0\ar[r]&	\upi_{k\lambda^p-(k-1)\lambda}(H\uA)\ar[r]^-{\varphi} \ar[d]_-{\cong}
				& \upi_{k\lambda^p-k\lambda}(H\uA) \ar[r]^-{} \ar[d]_-{\cong}&
				\upi_{k\lambda^p-(k-1)\lambda-1}(S(\lambda)_+\smas H\uA)\ar[r]^-{}\ar[d]_-{\cong}&0\\ 
				&	\uA_p\boxtimes \bZ_q	\ar[r] &\underline{L}^{(k)}_1\ar[r]&\uZ
		}}
	\end{equation*}
	Note that $\downarrow^{C_{pq}}_{C_q}\upi^{C_{pq}}_{k\lambda^p-k\lambda}(H\uA)\cong \upi^{C_q}_{k\lambda^p-k\lambda}(H\uA)\cong \uA[\gamma_p^k]$ where  $ p \gamma_p\equiv1\pmod q $ by \eqref{eqtaual}.  Let  $\underline{L}^{(k)}_1(C_{pq}/C_p)\cong \Z\{x_p\} $,  $\underline{L}^{(k)}_1(C_{pq}/e)\cong \Z\{x_e\} $,   $\underline{L}^{(k)}_1(C_{pq}/C_q)\cong \Z\{x_q, \tr^{C_q}_e x_e\} $, and $\uA_p \boxtimes \bZ_q({C_{pq}}/C_{pq})\cong \Z\{y_{pq}, \tr^{C_{pq}}_{C_q}(y_q)\}$. Then we may choose the generators for $\underline{L}^{(k)}_1(C_{pq}/C_{pq}) $ as $\{\varphi(y_{pq})+\gamma _p^k \cdot \tr^{C_{pq}}_{C_p}(x_p), \tr^{C_{pq}}_{C_q}(x_q), \tr^{C_{pq}}_{C_p}(x_p)\}$. A similar computation determines $\underline{L}^{(k)}_2$.  
	\[
	\xymatrix{\\\underline{L}^{(k)}_1:}
	\xymatrix{ & &\Z^3\ar@/_1.1pc/[dll]_{{ \left[\begin{smallmatrix} \gamma_p^kq &\gamma_p^kp &q  \end{smallmatrix}\right]}}   \ar@/^1.1pc/[drr]^{{ \left[\begin{smallmatrix} q & p &0  \\ 0  &0 &1 \end{smallmatrix}\right]}}  & &\\ 
		\Z \ar@/_.7pc/[urr]|{{\tiny \left[\begin{smallmatrix} 0  \\0 \\ 1   \end{smallmatrix}\right]}} \ar@/_1.1pc/[drr]_1 & & & &\Z^2 \ar@/^.7pc/[ull]|{{\tiny \left[\begin{smallmatrix} 0 &0 \\1 &0 \\ 0 & p  \end{smallmatrix}\right]}} \ar@/^1.1pc/[dll]^{{ \left[\begin{smallmatrix} \gamma_p^k &q \end{smallmatrix}\right]}} \\ 
		&  &  \Z \ar@/_.7pc/[ull]_ p \ar@/^.7pc/[urr]|{{\tiny \left[\begin{smallmatrix} 0 \\ 1 \end{smallmatrix}\right]}}}
	\hspace{1.5cm}
	\xymatrix{\\\underline{L}^{(k)}_2:}
	\xymatrix{ & &\Z^3\ar@/_1.1pc/[dll]_{{ \left[\begin{smallmatrix} p^{k-1} &p^k &q  \end{smallmatrix}\right]}}   \ar@/^1.1pc/[drr]^{{ \left[\begin{smallmatrix} 1 & p &0  \\ 0 &0 &p \end{smallmatrix}\right]}}  & &\\ 
		\Z \ar@/_.7pc/[urr]|{{\tiny \left[\begin{smallmatrix} 0  \\0 \\ 1   \end{smallmatrix}\right]}} \ar@/_1.1pc/[drr]_p & & & &\Z^2 \ar@/^.7pc/[ull]|{{\tiny \left[\begin{smallmatrix} 0 &0 \\1 &0 \\ 0 & 1  \end{smallmatrix}\right]}} \ar@/^1.1pc/[dll]^{{ \left[\begin{smallmatrix} p^k &q \end{smallmatrix}\right]}} \\ 
		&  &  \Z \ar@/_.7pc/[ull]_ 1 \ar@/^.7pc/[urr]|{{\tiny \left[\begin{smallmatrix} 0 \\ 1 \end{smallmatrix}\right]}}}
	\]
	After taking box product as in \eqref{boxuni},	$\upi_\alpha(H\uA)$  works out to be
	\[
	\xymatrix{ & &\Z^3\ar@/_1.1pc/[dll]_{{ \left[\begin{smallmatrix} \gamma_p^kq \tau _p\tau _q\tau _e &\gamma_p^kp\tau _p\tau _e &q  \end{smallmatrix}\right]}}   \ar@/^1.1pc/[drr]^{{ \left[\begin{smallmatrix} q\tau _q & p &0  \\ 0  &0 &1 \end{smallmatrix}\right]}}  & &\\ 
		\Z \ar@/_.7pc/[urr]|{{\tiny \left[\begin{smallmatrix} 0  \\0 \\ 1   \end{smallmatrix}\right]}} \ar@/_1.1pc/[drr]_1 & & & &\Z^2 \ar@/^.7pc/[ull]|{{\tiny \left[\begin{smallmatrix} 0 &0 \\1 &0 \\ 0 & p  \end{smallmatrix}\right]}} \ar@/^1.1pc/[dll]^{{ \left[\begin{smallmatrix} \gamma_p^k\tau _p\tau _e &q \end{smallmatrix}\right]}} \\ 
		&  &  \Z \ar@/_.7pc/[ull]_ p \ar@/^.7pc/[urr]|{{\tiny \left[\begin{smallmatrix} 0 \\ 1 \end{smallmatrix}\right]}}}
	\hspace{1cm}
	\xymatrix{ & &\Z^3\ar@/_1.1pc/[dll]_{{ \left[\begin{smallmatrix} p^{k-1}\tau _p\tau _q\tau _e &p^k\tau _p\tau _e &q  \end{smallmatrix}\right]}}   \ar@/^1.1pc/[drr]^{{ \left[\begin{smallmatrix} \tau _q & p &0  \\ 0 &0 &p \end{smallmatrix}\right]}}  & &\\ 
		\Z \ar@/_.7pc/[urr]|{{\tiny \left[\begin{smallmatrix} 0  \\0 \\ 1   \end{smallmatrix}\right]}} \ar@/_1.1pc/[drr]_p & & & &\Z^2 \ar@/^.7pc/[ull]|{{\tiny \left[\begin{smallmatrix} 0 &0 \\1 &0 \\ 0 & 1  \end{smallmatrix}\right]}} \ar@/^1.1pc/[dll]^{{ \left[\begin{smallmatrix} p^k\tau _p\tau _e &q \end{smallmatrix}\right]}} \\ 
		&  &  \Z \ar@/_.7pc/[ull]_ 1 \ar@/^.7pc/[urr]|{{\tiny \left[\begin{smallmatrix} 0 \\ 1 \end{smallmatrix}\right]}}}
	\]
The integers $\tau_p, \tau_q, \tau_e$ are obtained from $\tau(\alpha-k(\lambda-\lambda^p))$.\\
	\textbf{Case 4}: Let $\beta= \lambda-\lambda^p-\lambda^q+2$. There exists $k\ne 0$  so that  $\alpha-k\beta\in RO_0({C_{pq}})$. For $k>0$, let $\underline{N}^{(k)}_1:=\upi_{k\beta}(H\uA)$  and  $\underline{N}^{(k)}_2:=\upi_{-k\beta}(H\uA)$. This fits into
	\begin{equation*}
		{%
			\xymatrix@R=0.5cm@C=0.5cm{
					\upi_{k\beta}(S(\lambda)_+\smas H\uA)\cong \uZ^*\ar[r]^-{} 
				& \upi_{k\beta}(H\uA) \ar[r]^-{\varphi} 
				&
				\upi_{k\beta-\lambda}( H\uA)\ar[r]^-{}
				&0.
		}}
	\end{equation*}
	We determine 
	\[
	\xymatrix{\\\underline{N}^{(k)}_1:}
	\xymatrix{ & &\Z^3\ar@/_1pc/[dll]_{{ \left[\begin{smallmatrix} 1 &0 &0 \\  0& p^{k-1}  &q \end{smallmatrix}\right]}}   \ar@/^1pc/[drr]^{{ \left[\begin{smallmatrix}  0 &1 &0 \\ q^{k-1} &0 &p \end{smallmatrix}\right]}}  & &\\ 
		\Z^2 \ar@/_.7pc/[urr]|{{\tiny \left[\begin{smallmatrix} q &0 \\0 &0 \\ 0 & 1  \end{smallmatrix}\right]}} \ar@/_1pc/[drr]_{{ \left[\begin{smallmatrix} q^k & p \end{smallmatrix}\right]}} & & & &\Z^2 \ar@/^.7pc/[ull]|{{\tiny \left[\begin{smallmatrix} 0 &0 \\p &0 \\ 0 &1  \end{smallmatrix}\right]}} \ar@/^1pc/[dll]^{{ \left[\begin{smallmatrix} p^k &q \end{smallmatrix}\right]}} \\ 
		&  &  \Z \ar@/_.7pc/[ull]|{{\tiny \left[\begin{smallmatrix} 0 \\ 1 \end{smallmatrix}\right]}} \ar@/^.7pc/[urr]|{{\tiny \left[\begin{smallmatrix} 0 \\ 1 \end{smallmatrix}\right]}}}
	\hspace{.9cm}
	\xymatrix{\\\underline{N}^{(k)}_2:}
	\xymatrix{ & &\Z^3\ar@/_1pc/[dll]_{{ \left[\begin{smallmatrix} q &0 &0 \\  0& \gamma_p^k  &q \end{smallmatrix}\right]}}   \ar@/^1pc/[drr]^{{ \left[\begin{smallmatrix}  0 &p &0 \\ \gamma_q^k &0 &p \end{smallmatrix}\right]}}  & &\\ 
		\Z^2 \ar@/_.7pc/[urr]|{{\tiny \left[\begin{smallmatrix} 1 &0 \\0 &0 \\ 0 & 1  \end{smallmatrix}\right]}} \ar@/_1pc/[drr]_{{ \left[\begin{smallmatrix} \gamma_q^k & p \end{smallmatrix}\right]}} & & & &\Z^2 \ar@/^.7pc/[ull]|{{\tiny \left[\begin{smallmatrix} 0 &0 \\1 &0 \\ 0 &1  \end{smallmatrix}\right]}} \ar@/^1pc/[dll]^{{ \left[\begin{smallmatrix} \gamma_p^k &q \end{smallmatrix}\right]}} \\ 
		&  &  \Z \ar@/_.7pc/[ull]|{{\tiny \left[\begin{smallmatrix} 0 \\ 1 \end{smallmatrix}\right]}} \ar@/^.7pc/[urr]|{{\tiny \left[\begin{smallmatrix} 0 \\ 1 \end{smallmatrix}\right]}}}
	\]
	where   $q\gamma_q\equiv 1\pmod p$ and   $p\gamma_p\equiv 1\pmod q$.
	After taking box product as in \eqref{boxuni},	$\upi_\alpha(H\uA)$  turns out to be
	\[
	\xymatrix{ & &\Z^3\ar@/_1pc/[dll]_{{ \left[\begin{smallmatrix} 1 &0 &0 \\  0& p^{k-1}\tau _p\tau _e  &q \end{smallmatrix}\right]}}   \ar@/^1pc/[drr]^{{ \left[\begin{smallmatrix}  0 &1 &0 \\ q^{k-1}\tau _q\tau _e &0 &p \end{smallmatrix}\right]}}  & &\\ 
		\Z^2 \ar@/_.7pc/[urr]|{{\tiny \left[\begin{smallmatrix} q &0 \\0 &0 \\ 0 & 1  \end{smallmatrix}\right]}} \ar@/_1pc/[drr]_{{ \left[\begin{smallmatrix} q^k\tau _q\tau _e & p \end{smallmatrix}\right]}} & & & &\Z^2 \ar@/^.7pc/[ull]|{{\tiny \left[\begin{smallmatrix} 0 &0 \\p &0 \\ 0 &1  \end{smallmatrix}\right]}} \ar@/^1pc/[dll]^{{ \left[\begin{smallmatrix} p^k\tau _p\tau _e &q \end{smallmatrix}\right]}} \\ 
		&  &  \Z \ar@/_.7pc/[ull]|{{\tiny \left[\begin{smallmatrix} 0 \\ 1 \end{smallmatrix}\right]}} \ar@/^.7pc/[urr]|{{\tiny \left[\begin{smallmatrix} 0 \\ 1 \end{smallmatrix}\right]}}}
	\hspace{1.5cm}
	\xymatrix{ & &\Z^3\ar@/_1pc/[dll]_{{ \left[\begin{smallmatrix} q &0 &0 \\  0& \gamma_p^k  \tau _p\tau _e &q \end{smallmatrix}\right]}}   \ar@/^1pc/[drr]^{{ \left[\begin{smallmatrix}  0 &p &0 \\ \gamma_q^k\tau _q\tau _e &0 &p \end{smallmatrix}\right]}}  & &\\ 
		\Z^2 \ar@/_.7pc/[urr]|{{\tiny \left[\begin{smallmatrix} 1 &0 \\0 &0 \\ 0 & 1  \end{smallmatrix}\right]}} \ar@/_1pc/[drr]_{{ \left[\begin{smallmatrix} \gamma_q^k \tau _q\tau _e& p \end{smallmatrix}\right]}} & & & &\Z^2 \ar@/^.7pc/[ull]|{{\tiny \left[\begin{smallmatrix} 0 &0 \\1 &0 \\ 0 &1  \end{smallmatrix}\right]}} \ar@/^1pc/[dll]^{{ \left[\begin{smallmatrix} \gamma_p^k \tau _p\tau _e &q \end{smallmatrix}\right]}} \\ 
		&  &  \Z \ar@/_.7pc/[ull]|{{\tiny \left[\begin{smallmatrix} 0 \\ 1 \end{smallmatrix}\right]}} \ar@/^.7pc/[urr]|{{\tiny \left[\begin{smallmatrix} 0 \\ 1 \end{smallmatrix}\right]}}}
	\]
where $\tau_p$, $\tau_q$, and $\tau_e$ are obtained as before.	\\
	\textbf{Case 5}:  There exists $k\ne 0$ with $k\ne -\ell$ such that $\alpha-(k\lambda^p+\ell\lambda)\in RO_0(C_{pq})$. Let $\beta=k\lambda^p+\ell\lambda$. If   either ($k>0$ and $\ell> -k$) or ($k<0$ and $\ell< -k$), then $\upi_\beta(H\uA)\cong \upi_{\lambda^p}(H\uA)\cong \uA_p \boxtimes \bZ_q $. After taking box product we get $\upi_\alpha(H\uA)\cong \uA_p[\tau _q] \boxtimes \bZ_q $. Next suppose $k<0$ and $\ell>-k$. Then  
	$$
	\upi_{\beta}(H\uA)\cong \upi_{k\lambda^p+(-k+1)\lambda}(H\uA)=\textup{Ker}(\upi_{k\lambda^p-k\lambda}(H\uA)\to \uZ) \cong  \uA_p[\gamma_q] \boxtimes \bZ_q
	$$
	 where $q\gamma_q+p\gamma_p=1$. Hence $\upi_\alpha(H\uA)\cong \uA_p[\gamma_q\tau _q] \boxtimes \bZ_q $. Finally, let $k>0$ and $\ell<-k$. Then $\upi_{\beta}(H\uA)\cong \textup{Coker}(\uZ^*\to \upi_{k \lambda^p-k \lambda}(H\uA))\cong \langle  \Z/p \rangle_p \boxtimes  \uZ_q^*\oplus  \uA_p[q]\boxtimes \bZ_q$. Hence $\upi_\alpha(H\uA)\cong \langle  \Z/p \rangle_p \boxtimes  \uZ_q^*\oplus\uA_p[q\tau _q] \boxtimes \bZ_q $, where $\tau_q$ is defined as before.\\
	\textbf{Case 6}: There exists $k,\ell,\ne0$ such that $\alpha-k(\lambda^p-\lambda)-\ell(2-\lambda^q)\in RO_{C_{pq}}$. Let $\beta=k(\lambda^p-\lambda)+\ell(2-\lambda^q)$. This fits into 
	\begin{myeq}\label{cses}
		\upi_{\beta}(S(\lambda)_+\smas H\uA)\to
		\upi_{\beta}(H\uA) \xrightarrow{\varphi} 
		\upi_{\beta-\lambda}( H\uA)\to
		0
	\end{myeq}
	where the first term is isomorphic to $ \uZ^*$. Four cases may arise:  (a) $2k+2\ell<0$ and $2\ell<0$. (b) $2k+2\ell<0$ and $2\ell>0$. (c)  $2k+2\ell>0$ and $2\ell<0$. (d) $2k+2\ell>0$ and $2\ell>0$. Associated Mackey functors are denoted by $ {U}^{(k)}_-$, ${U}^{(k)}_+$,  ${V}^{(k)}_-$, ${V}^{(k)}_+$.
	Using \eqref{cses} we deduce the following: $U_-^{(k)}\cong \uZ^*_p\boxtimes \uA_q[\gamma_p^k] $, $V_+^{(k)}\cong \uZ_p\boxtimes \uA_q[\gamma_p^k]$, and
	\[
	\xymatrix{\\{U}^{(k)}_+:}
	\xymatrix{ & &\Z^2\oplus\Z/q\ar@/_1.1pc/[dll]_{{ \left[\begin{smallmatrix} p^{k-1} &q&0  \end{smallmatrix}\right]}}   \ar@/^1.1pc/[drr]^{{ \left[\begin{smallmatrix}  1 &0 &0\\0&p&0 \end{smallmatrix}\right]}}  & &\\ 
		\Z \ar@/_.7pc/[urr]|{{\tiny \left[\begin{smallmatrix} 0  \\  1\\0  \end{smallmatrix}\right]}} \ar@/_1.1pc/[drr]_p & & & &\Z^2 \ar@/^.7pc/[ull]|{{\tiny \left[\begin{smallmatrix} p &0   \\0 &1\\0&0 \end{smallmatrix}\right]}} \ar@/^1.1pc/[dll]^{{\tiny \left[\begin{smallmatrix} p^k &q  \end{smallmatrix}\right]}} \\ 
		&  &  \Z \ar@/_.7pc/[ull]_1 \ar@/^.7pc/[urr]|{{\tiny \left[\begin{smallmatrix} 0 \\1  \end{smallmatrix}\right]}}}
		\hspace{.4cm}
	\xymatrix{\\{V}^{(k)}_-:}
	\xymatrix{ & &\Z^2\ar@/_1pc/[dll]_{{ \left[\begin{smallmatrix} p\gamma_p^k &q  \end{smallmatrix}\right]}}   \ar@/^1pc/[drr]^{{ \left[\begin{smallmatrix}  p&0\\0&1 \end{smallmatrix}\right]}}  & &\\ 
		\Z \ar@/_.7pc/[urr]|{{\tiny \left[\begin{smallmatrix} 0 \\1   \end{smallmatrix}\right]}} \ar@/_1pc/[drr]_1 & & & &\Z^2 \ar@/^.7pc/[ull]|{{\tiny \left[\begin{smallmatrix} 1&0 \\0&p  \end{smallmatrix}\right]}} \ar@/^1pc/[dll]^{{ \left[\begin{smallmatrix} \gamma_p^k &q \end{smallmatrix}\right]}} \\ 
		&  &  \Z \ar@/_.7pc/[ull]_p \ar@/^.7pc/[urr]|{{\tiny \left[\begin{smallmatrix} 0 \\ 1 \end{smallmatrix}\right]}}}
	\]
	Here $ p\gamma_p\equiv 1 \pmod q $. In the case of ${U}_-^{(k)}$ and ${V}_+^{(k)}$, we get $p^k$ in place of $\gamma^k_p$ if $k<0$. After taking box product as in \eqref{boxuni},	$\upi_\alpha(H\uA)$  turns out to be respectively  $\uZ^*_p\boxtimes \uA_q[\gamma_p^k \tau _p\tau _e]$,  $\uZ_p\boxtimes \uA_q[\gamma_p^k \tau _p\tau _e]$, 
		\[
	\xymatrix{ & &\Z^2\oplus\Z/q\ar@/_1.1pc/[dll]_{{ \left[\begin{smallmatrix} p^{k-1}\tau _p\tau _e &q&0  \end{smallmatrix}\right]}}   \ar@/^1.1pc/[drr]^{{ \left[\begin{smallmatrix}  1 &0 &0\\0&p&0 \end{smallmatrix}\right]}}  & &\\ 
		\Z \ar@/_.7pc/[urr]|{{\tiny \left[\begin{smallmatrix} 0  \\  1\\0  \end{smallmatrix}\right]}} \ar@/_1.1pc/[drr]_p & & & &\Z^2 \ar@/^.7pc/[ull]|{{\tiny \left[\begin{smallmatrix} p &0   \\0 &1\\0&0 \end{smallmatrix}\right]}} \ar@/^1.1pc/[dll]^{{\tiny \left[\begin{smallmatrix} p^k\tau _p\tau _e &q  \end{smallmatrix}\right]}} \\ 
		&  &  \Z \ar@/_.7pc/[ull]_1 \ar@/^.7pc/[urr]|{{\tiny \left[\begin{smallmatrix} 0 \\1  \end{smallmatrix}\right]}}}
		\hspace{.5cm}
	\xymatrix{ & &\Z^2\ar@/_1pc/[dll]_{{ \left[\begin{smallmatrix} p\gamma_p^k\tau _p\tau _e &q  \end{smallmatrix}\right]}}   \ar@/^1pc/[drr]^{{ \left[\begin{smallmatrix}  p&0\\0&1 \end{smallmatrix}\right]}}  & &\\ 
		\Z \ar@/_.7pc/[urr]|{{\tiny \left[\begin{smallmatrix} 0 \\1   \end{smallmatrix}\right]}} \ar@/_1pc/[drr]_1 & & & &\Z^2 \ar@/^.7pc/[ull]|{{\tiny \left[\begin{smallmatrix} 1&0 \\0&p  \end{smallmatrix}\right]}} \ar@/^1pc/[dll]^{{ \left[\begin{smallmatrix} \gamma_p^k\tau _p\tau _e &q \end{smallmatrix}\right]}} \\ 
		&  &  \Z \ar@/_.7pc/[ull]_p \ar@/^.7pc/[urr]|{{\tiny \left[\begin{smallmatrix} 0 \\ 1 \end{smallmatrix}\right]}}}
	\]
where $\tau_p$, $\tau_q$, and $\tau_e$ are obtained analogously as before.

	\section{The positive cone of $ \pi^G_\bigstar(H\uA) $}\label{secrepA}
	The objective of this section is to describe  $\pi_{\bigstar_\div}^{G} (H\uA) $ in two cases: $ (a) $ $ |G| $ is a prime power $ = p^m $. $ (b)$ $|G| $ is square free  $= p_1\cdots p_k $. We use different methods in the cases $ (a) $ and $ (b) $. Recall that  $ \bigstar^e\subseteq \bigstar_\div $   consists of  linear combinations of the form  $ \ell -(\sum_{d_i\mid n}~~b_i\mspace{2mu} \lambda^{d_i}) $  where $  \ell  \in \Z$ and $ b_i \ge 0$.
	\begin{mysubsection}{The prime power  case}\label{subsecprimeA}
		We determine the coefficient ring $\pi_{*}^{\G} (\Sigma^VH\uA) $ via an explicit cellular decomposition of $ S^V $, where $ V=\sum_{i=0}^{m-1} b_i\lambda^{p^i} $ is a representation of $\G= C_{p^m} $.  In this method, we first determine  $\pi_{\bigstar^e}^{G} (H\uA)  $, and then by adding a copy of $ \sigma $ complete the picture of $\pi_{\bigstar_\div}^{G} (H\uA) $. The following  is a prototype of the calculation.
		\begin{exam}\label{ex:cell decom Cp^n}
			Let $ V=\lambda + \lambda^p $. 	Let $ g\in \G $ be the fixed generator, and the  equivariant cell decomposition of $ S^V $ is given by
			\begin{align*}
				S^{\lambda + \lambda^{p}} &=S^{\lambda^{p}} \smas[\G/\G_+ \smas e^0 \cup \G/e_+ \smas e^1\underset{1-g}{\cup}\G/e_+ \smas e^2]\\
				&=	 \{\G/\G_+ \smas e^0\cup \G/C_{p+}\smas e^1 \underset{1-g}{\cup} \G/C_{p+} \smas e^2\} \cup \G/e_+ \smas e^3 \underset{1-g}{\cup} \G/e_+ \smas e^4.
			\end{align*}
			In the above the $ 2 $-skeleton is $ S^{\lambda^p} $, and the $ 3 $-cell and the $ 4 $-cell come from the identification $ S^{\lambda^p}\smas {G/e}_+\smas e^n\simeq  {G/e}_+\smas e^{n+2} $.	So the equivariant cellular chain complex becomes
			\begin{equation*}
				A(e)\xrightarrow{0} A(e)\xrightarrow{\partial_3} A(C_p)\xrightarrow{0} A(C_p) \xrightarrow{\tr_{C_p}^\G} A(\G).
			\end{equation*}
			Observe that the map $ \partial_3 $ arises via the composite
			\begin{equation*}
				{\G/e}_+ \smas S^{2}\simeq 	{\G/e}_+ \smas S^{\lambda^p} \longrightarrow  \G/\G_+ \smas S^{\lambda^p} \longrightarrow {\G/C_p}_+ \smas S^{\lambda^p},
			\end{equation*}
			where by collapsing the orbit $ \G/e $ to a point  we get the second map and the third one is the Pontryagin-Thom collapse map. Hence the map $ \partial_3 $ is given by 
			\begin{myeq}\label{tr form}
				\partial_3(1)=\res_{C_p}^\G  \tr_e^\G(1) = p^{m-1}[C_p/e], 
			\end{myeq}
			so $ \partial_3 $  is injective. Thus the homotopy groups in odd degree vanish and 
			\[
			\pi_i^\G(S^{\lambda + \lambda^p} \smas H\uA)=\begin{cases}
				A(\G)/\tr_{C_p}^\G(A(C_p)), & \text{if $i=$ 0}\\
				A(C_p)/(p^{m-1}[C_p/e]), & \text{if $i=$ 2}\\
				A(e) & \text{if $i=$ 4}.
			\end{cases}
			\]
		\end{exam}
		\vspace{.2cm}
		For a representation $ V= k_{m-1}\lambda^{p^{m-1}} + \cdots + k_0\lambda $ of $\G $,  the cellular decomposition  of $ S^V $ is given by \cite[Proposition 2.3]{HHR17}
		\begin{myeq}\label{celldecom}
			S^V= {\G/\G}_+ \smas e^0 \cup {\G/C_{p^{m-1}}}_+  \smas e^1\underset{1-g}{\cup}  {\G/C_{p^{m-1}}}_+  \smas e^2 \cup \cdots  \underset{1-g}{\cup} {\G/C_{p^{m-1}}}_+  \smas e^{2k_{m-1}}
		\end{myeq}
		\begin{equation*}
			{\cup} ~ {\G}/C_{p^{m-2}} \smas  e^{{2k_{m-1}} +1} \cup \cdots \underset{1-g}{\cup} \G/e_+  \smas e^{\dim(V)}.
		\end{equation*}
		
		The following  describes the coefficient ring $ \pi_{*}^{\G}(\Sigma^V H \uA) $. We write $ \lambda_i $ to mean $ \lambda^{p^i} $.
		
		\begin{theorem}\label{thm A cpm}
			Let $ \G=C_{p^m} $. The ring $ \pi^\G_{\bigstar^e}(H\uA) $ is generated over $ A(G) $ by the classes 
			$$
			a_{\lambda_0}, \cdots, a_{\lambda_{m-1}}, u_{\lambda_0}^{(e)}, \cdots, u_{\lambda_{m-1}}^{(C_{p^{m-1}})} 
			$$
			modulo the  relations \eqref{eq: a-relation in A-coeff}, \eqref{eq: u-relation for A-coeff} and \eqref{eq:au rel general}.
		\end{theorem}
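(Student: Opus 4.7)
The plan is to extend the cellular calculation of Example~\ref{ex:cell decom Cp^n} to an arbitrary $V=\sum_{i=0}^{m-1}k_i\lambda_i$ with $k_i\ge 0$, and to match the resulting cellular output against the polynomial algebra generated over $A(\G)$ by the Euler and orientation classes modulo the three families of relations. Organize $\bigstar^e$ as $\{n-V : V=\sum k_i\lambda_i,\ k_i\ge 0,\ n\in\Z\}$, so the target groups to describe are $\pi^\G_n(S^V\wedge H\uA)$.

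First I would write down the equivariant cellular chain complex of $S^V$ coming from the decomposition \eqref{celldecom}. The $n$-cell is of the form $\G/C_{p^{j(n)}}{}_+\wedge e^n$, where the isotropy exponent $j(n)$ is determined by how many of the $k_i$ have been consumed among $i\ge j(n)$. The boundary maps split into two types: in odd degrees arising within a fixed block of $\lambda_i$'s the differential is $1-g$ which acts trivially on $A(C_{p^{j(n)}})$ and is therefore zero, and in the even degrees where the isotropy decreases from $C_{p^{j}}$ to $C_{p^{j-1}}$ the attaching map is a $\res\circ\tr$ composite as in \eqref{tr form}, sending $1\mapsto p\cdot[C_{p^j}/C_{p^{j-1}}]$. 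Taking homology produces an explicit description of $\pi^\G_n(S^V\wedge H\uA)$ as a subquotient of $A(C_{p^{j}})$ for the appropriate $j$, compatibly as $V$ varies via the natural inclusions of cell complexes.

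Next I would identify the resulting subquotients with monomials in Euler and orientation classes. The cell $\G/C_{p^j}{}_+\wedge e^n$ of $S^V$ corresponds, via the Pontryagin--Thom collapse and the isomorphism $\pi^\G_{2-\lambda_i}(H\uA)\cong A(C_{p^i})$ used to define $u_{\lambda_i}$ in \eqref{eq:defining u-class}, to a monomial $\prod_i a_{\lambda_i}^{\epsilon_i}\,u_{\lambda_i}^{\eta_i}$ multiplied by an element of $A(C_{p^j})$ which can in turn be expressed through the $A(\G)$-module generators $u_{\lambda_i}^{(C_{p^i})}$ from \eqref{genH}. This yields a surjection from the polynomial $A(\G)$-algebra on $\{a_{\lambda_i}\}$ and $\{u_{\lambda_i}^{(C_{p^i})}\}$ onto $\pi^\G_{\bigstar^e}(H\uA)$, and the verification that the three families of relations lie in the kernel is immediate from the previously established formulas: the a-relations from \eqref{eq: a-relation in A-coeff}, the u-relations from Proposition \ref{prop:u-rel}, and the au-relations from Theorem \ref{prop:au-rel}.

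The main obstacle will be proving that the kernel of this surjection is precisely the ideal generated by these three families. The approach is a normal-form argument: modulo the three families, any monomial can be rewritten so that for each $i$ at most one of $a_{\lambda_i}$ or $u_{\lambda_i}$ appears with nonzero exponent (the au-relations handle mixed products $a_{\lambda_i}u_{\lambda_j}$, the u-relations reduce coefficients in front of $u$-monomials via the $A(C_{p^i})$-module structure, and the a-relations control coefficients in front of pure $a$-monomials). One then checks that the resulting normal forms are in bijection with a $\Z$-basis of the cellular homology computed in the first two steps, degree by degree. The technical bookkeeping required to match the $A(C_{p^j})$-multiplicative structure on the subquotients with the abstract presentation is the delicate part, but it is dictated by the cellular chain complex and does not require relations beyond the three listed families.
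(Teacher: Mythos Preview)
Your overall strategy---compute the cellular chain complex of $S^V$ directly, produce a surjection from the abstract $A(\G)$-algebra on $a$'s and $u$'s, then show the kernel is the ideal generated by the three families---is a reasonable plan, but it diverges from the paper's argument and has two concrete problems.

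First, the cellular boundary formulas are misstated. At a transition where the isotropy jumps from $C_{p^r}$ up to $C_{p^l}$ (with $r<l$), the differential $A(C_{p^r})\to A(C_{p^l})$ is $\res^{\G}_{C_{p^l}}\tr^{\G}_{C_{p^r}}$, which sends $1\mapsto p^{m-l}[C_{p^l}/C_{p^r}]$, not $p\cdot[C_{p^j}/C_{p^{j-1}}]$; the exponent depends on both $m$ and $l$, and the isotropy may jump by more than one step when some $k_i=0$. You have also not accounted for the norm maps $1+g+\cdots+g^{p^{m-i}-1}$ within a block of $\lambda_i$'s, which act as multiplication by $p^{m-i}$ on $A(C_{p^i})$ and contribute nontrivially to the homology.

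Second, and more seriously, the ``normal form'' step is where the entire difficulty lies, and your sketch does not carry it. The claim that modulo the relations ``at most one of $a_{\lambda_i}$ or $u_{\lambda_i}$ appears with nonzero exponent'' is not a normal form: both $a_{\lambda_i}^2$ and $u_{\lambda_i}^2$ occur in fixed degrees, and the au-relations relate monomials across \emph{different} indices, not powers at a single index. What you actually need is that in each degree all monomials map to a single cyclic $A(C_{p^j})$-module (for the appropriate $j$) and that the au-relations account for all linear dependencies among them. The paper sidesteps this bookkeeping by an induction that removes the $\lambda_r$ with \emph{smallest} stabilizer and runs the cofibre sequence for $S(\lambda_r)_+\to S^0\to S^{\lambda_r}$. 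This has two payoffs: (i) the map $a_{\lambda_r}$ is surjective in the relevant degree because $\res_{C_{p^r}}W$ is trivial, so every class is $a_{\lambda_r}$ times something already known by induction; and (ii) the kernel of $a_{\lambda_r}$ is exactly the image of $\psi_{|W|}$, which is visibly generated by transfers, hence by the a-relations. The au-relations then enter only to rewrite any monomial $(\prod_{i\neq j}u_{\lambda_i})a_{\lambda_j}$ as an explicit $A(C_{p^l})$-multiple of $a_{\lambda_r}\cdot\prod_{i\neq r}u_{\lambda_i}$, which closes the loop. If you want to salvage the direct approach, you would need to prove precisely this cyclicity statement degree by degree, and that is essentially the content of the paper's inductive step.
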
 
		
		\begin{proof}
			Assume by induction that the statement holds for any subgroup of $ \G $. Fix a representation $ V= \sum_{i=1}^s b_i\lambda_i \in \bigstar^e$ with $ \lambda_i\ne 1$. Then the cellular decomposition of $ S^V $ implies $ \pi_*^\G(S^V\smas H\uA) $ is non-zero only for $ 0\le *\le \dim V $. Let  $ \lambda_r \in V $ have the smallest stabilizer.
			We apply induction on the number of representations, that is, we assume the result for   $ W=V - \lambda_r $ and prove it for $ V $.
			We  observe that  in degrees $ *< |W| $  the  cellular decomposition of $ S^V $ coincides with the   cellular decomposition of $ S^W $. Hence  it suffices to establish this in degrees $ *=|W|, |W| +1  $ and $  |W|  +2 $. \par
			Let $ *=  \lvert  W \rvert +2$. We claim that $ \prod_{\lambda_i\in V} (u_{\lambda_i}^{(C_{p^i})} )^{b_i}$ generates $ \pi_{ |V|}^\G(S^V \smas H\uA )$. We know that $\pi_{ 2}^\G(S^{\lambda^k} \smas H\uA ) \cong A(C_{p^k}) $ is generated by $ u_{\lambda_k}^{(C_{p^k})} $. Suppose by induction $ \phi_{W}= \prod_{\lambda_i\in W} (u_{\lambda_i}^{(C_{p^i})})^{b_i}$ generates $ \pi_{ |W|}^\G(S^W \smas H\uA )$.
			The claim follows once we establish $ \phi_{W} \cdot  u_{\lambda_r}^{(C_{p^r})} $ generates $ \pi_{ |W| +2}^\G(S^V \smas H\uA) \cong A(C_{p^r}) $.
			For this consider the following  map
			\[
			\pi_{ |W|}^{\G}(S^{W}\smas H\uA)\otimes \pi_2^{\G}(S^{\lambda^r}\smas H\uA)\xrightarrow{\prod_{\lambda_i \in W} u_{\lambda_i}\cdot u_{\lambda_r}} \pi_{ |W|+2}^{\G}(S^{W+\lambda^r}\smas H\uA).
			\]
			Restricting this map to the orbit $ [\G/C_{p^r}] $, we see that the product of generators $ \pi_{ |W|}^{C_{p^r}}(S^{|W|}\smas H\uA) $ and $ \pi_2^{C_{p^r}}(S^2\smas H\uA) $ maps to  $1\in \pi^{C_{p^r}}_{|W| +2}(S^{|W|+2}\smas H\uA) $. Moreover, from the cellular structure the restriction map 
			\begin{equation*}
				\res^\G_{C_{p^r}} : \pi^\G_{|W|+2}
				(S^V \smas H\uA) \rightarrow \pi_{ |W| +2}^{C_{p^r}}(S^{|W| +2} \smas H\uA)
			\end{equation*}
			is an isomorphism. 
			So the image of $ \prod_{\lambda_i \in W} u_{\lambda_i} \cdot u_{\lambda_r} $ is   $1\in  \pi_{ |W| +2}^\G(S^V \smas H \uA) \cong A(C_{p^r})$. Proceeding similarly we obtain that $ \prod_{\lambda_i \in W} u_{\lambda_i} \cdot u_{\lambda_r}^{(C_{p^r})} $ generates $ \pi_{ |W| +2}^\G(S^V \smas H \uA) $, so does $ \phi_{W} \cdot  u_{\lambda_r}^{(C_{p^r})} $. \par
			
			Next, consider the following long exact sequence associated to the cofibre sequence
			\begin{equation*} 
				S(\lambda_r)_+ \rightarrow S^0 \rightarrow S^{\lambda_r} ,
			\end{equation*}
			\begin{myeq}\label{eq:les for Cp^n case}
				\cdots \rightarrow \pi_{*}^\G({S(\lambda_r)}_+ \smas S^W \smas H\uA) \xrightarrow{\psi_*} \pi_{*}^\G(S^W \smas H\uA) \xrightarrow {a_{\lambda_r}} \pi_{*}^{\G}(S^V \smas H \uA) \rightarrow \pi_{*-1}^\G(S{(\lambda_r)}_+ \smas S^W \smas H \uA) \rightarrow \cdots 
			\end{myeq}
			Let $ *= |W|+1 $ and let $ \lambda_l\in W  $ have the smallest stabilizer. The map
			\[
			\psi_{|W|}: A(C_{p^r})\cong \pi_{|W|}^\G({\G/C_{p^r}}_+\smas S^{|W|}\smas H\uA)\to \pi_{ |W|}^\G(S^W\smas H\uA)\cong A(C_{p^l})
			\]
			is given by $ \res_{C_{p^l}}^\G \tr_{C_{p^r}}^\G(1)= p^{m-l}[C_{p^l}/C_{p^r}]$ as observed in Example \ref{ex:cell decom Cp^n}. So injective. Hence $ \pi_{ |W| +1}^\G(S^V\smas H\uA) $ is zero.\par
			When $ *=|W| $, the last  term  in \eqref{eq:les for Cp^n case} is zero because of the following (this follows analogously as \cite[(4.4)]{BG20})
\[\upi^{C_{p^r}}_\alpha (H\uA) = 0, \mbox{ and } \upi_{\alpha-1}^{C_{p^r}}(H\uA) = 0 \implies \upi_\alpha^{C_{p^r}}(S(\lambda_r)_+ \wedge H\uA)=0.\]
Hence $ a_{\lambda_r} $ is surjective.\par
			Finally, to deduce that there are no further relations, first we observe  that the image of $ \psi_{|W|} $ is a product of $u$-classes and elements of $I_{\tr^G_{\ker(\lambda_r)}}$. Moreover, we may find an explicit representative for  each element of the  group $ \pi_{|W|}^G(S^W\smas H\uA)/\Im(\psi_{|W|})\cong \pi_{|W|}^\G(S^V\smas H\uA) $ with the help of relation \eqref{prop:au-rel} as follows: any combination of elements of the form $ \displaystyle \prod_{\lambda_i \in V\setminus \lambda_j } u_{\lambda_i}a_{\lambda_j}\in \pi_{|W|}^\G(S^V\smas H\uA) $ can be rewritten (using \eqref{eq:au rel general})  as 
			$ a_{\lambda_r}([C_{p^j}/C_{p^r}]\cdot \displaystyle \prod_{\lambda_i \in V\setminus \lambda_r } u_{\lambda_i}) $, which can be simplified further as
			\[
			a_{\lambda_r}([C_{p^j}/C_{p^r}]\cdot  \prod_{\lambda_i \in V\setminus \lambda_r } u_{\lambda_i})=a_{\lambda_r}(\res_{C_{p^l}}^{C_{p^j}}[C_{p^j}/C_{p^r}]  \prod_{\lambda_i \in V\setminus \lambda_r } u_{\lambda_i})=p^{j-l}a_{\lambda_r}\Big([C_{p^l}/C_{p^r}]  \prod_{\lambda_i \in V\setminus \lambda_r } u_{\lambda_i}\Big).
			\]
			This completes the proof.
		\end{proof}
		The following  describes $ \pi^\G_{\div}(H\uA) $.  We only need to consider the case $ p=2 $ as Theorem \ref{thm A cpm} covers the $ p $ odd case. 
		\begin{theorem}\label{thm A cpm div}
			Let $ \G=C_{2^m} $. The ring $ \pi^\G_{\bigstar_\div}(H\uA) $ is generated over $ A(G) $ by the classes 
			$$
			a_{\sigma}, a_{\lambda_0}, \cdots, a_{\lambda_{m-1}}, u_{\lambda_0}^{(e)}, \cdots, u_{\lambda_{m-1}}^{(C_{2^{m-1}})} 
			$$
			modulo the  relations \eqref{eq: a-relation in A-coeff}, \eqref{eq: u-relation for A-coeff} and \eqref{eq:au rel general}.
		\end{theorem}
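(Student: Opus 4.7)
The plan is to extend the cellular-chain argument of Theorem \ref{thm A cpm} by inducting over representations of the form $V + \sigma$ with $V \in \bigstar^e$; since any element of $\bigstar_\div$ either lies in $\bigstar^e$ (handled by Theorem \ref{thm A cpm}) or is of this form, one extra inductive layer suffices. The new ingredients are the Euler class $a_\sigma$ together with the implicit identity $a_\sigma^2 = a_{\lambda_{m-1}}$, forced by $2\sigma \cong \lambda_{m-1}$ in $RO(C_{2^m})$ via the product formula $a_{U \oplus W} = a_U a_W$.

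Given $V \in \bigstar^e$, the Gysin cofibre sequence
\[
(G/C_{2^{m-1}})_+ \to S^0 \xrightarrow{a_\sigma} S^\sigma
\]
(using $S(\sigma) = G/C_{2^{m-1}}$ since $\ker\sigma = C_{2^{m-1}}$), smashed with $S^V \wedge H\uA$ and simplified via Wirthm\"uller, yields a long exact sequence
\[
\cdots \to \pi_*^{C_{2^{m-1}}}(S^V \wedge H\uA) \xrightarrow{\tr^G_{C_{2^{m-1}}}} \pi_*^G(S^V \wedge H\uA) \xrightarrow{a_\sigma} \pi_*^G(S^{V+\sigma} \wedge H\uA) \xrightarrow{\delta} \pi_{*-1}^{C_{2^{m-1}}}(S^V \wedge H\uA) \to \cdots
\]
whose flanking terms are known, the $G$-side by Theorem \ref{thm A cpm} and the $C_{2^{m-1}}$-side by the inductive hypothesis (since $\res^G_{C_{2^{m-1}}}V$ still lies in $\bigstar^e$ for the subgroup).

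The concrete steps mirror the pattern in Theorem \ref{thm A cpm}. First, I would place $\sigma$ at the top of the stabilizer ordering in the cellular decomposition \eqref{celldecom} and show that $\pi^G_{|V|+1}(S^{V+\sigma}\wedge H\uA)$ is generated by $a_\sigma \cdot \prod_{\lambda_i \in V}(u_{\lambda_i}^{(C_{p^i})})^{b_i}$ by restricting to the orbit $G/(K \cap C_{2^{m-1}})$ (where $K$ is the smallest stabilizer in $V$) and reducing to the case handled by Theorem \ref{thm A cpm}. Second, verify the vanishing $\pi^G_{|V|+2}(S^{V+\sigma}\wedge H\uA) = 0$ via the injectivity of the relevant transfer boundary analogous to \eqref{tr form}. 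Third, identify the a-relation \eqref{eq: a-relation in A-coeff} specialized to $\xi = \sigma$, namely $\tr^G_{C_{2^{m-1}}}(A(C_{2^{m-1}})) \cdot a_\sigma = 0$, as accounting for exactly the image of the transfer appearing in the long exact sequence.

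The main obstacle is to rule out any hidden relation beyond those asserted. It reduces to showing that the image of $\tr^G_{C_{2^{m-1}}}$, which by the inductive hypothesis is generated by transfers of products $\prod a_{\lambda_i}^{\epsilon_i} \cdot u_{\lambda_j}^{(K_j)}$, is already captured on the $G$-side by a combination of the a-, u-, and au-relations; this uses Frobenius reciprocity $\tr^G_H(x)\cdot y = \tr^G_H(x \cdot \res^G_H(y))$ together with the explicit generators of Definition \ref{defxh}. Combined with the implicit relation $a_\sigma^2 = a_{\lambda_{m-1}}$ from the Euler product formula, this completes the presentation of $\pi^G_{\bigstar_\div}(H\uA)$.
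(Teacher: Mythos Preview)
Your degree bookkeeping is off by one throughout, and this is not a typo but stems from treating $\sigma$ as if it were $2$-dimensional like the $\lambda_i$. The element $a_\sigma \cdot \prod_{\lambda_i \in V}(u_{\lambda_i}^{(C_{2^i})})^{b_i}$ lives in $\pi^{\G}_{|V|-V-\sigma}(H\uA) = \pi^{\G}_{|V|}(S^{V+\sigma}\wedge H\uA)$, not in degree $|V|+1$. In fact the top group $\pi^{\G}_{|V|+1}(S^{V+\sigma}\wedge H\uA)$ is \emph{zero}: $V+\sigma$ is non-orientable, so there is no orientation class there. Likewise your ``vanishing'' $\pi^{\G}_{|V|+2}(S^{V+\sigma}\wedge H\uA)=0$ is vacuous, since $|V|+2$ exceeds $\dim(V+\sigma)$; the substantive vanishing to be proved is in degree $|V|+1$ (and more generally in every odd degree).

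Beyond the degree shift there is a genuine strategic gap. Your plan peels off $\sigma$ once and appeals to Theorem~\ref{thm A cpm} for the remaining $V\in\bigstar^e$; the paper instead peels off $\lambda_r$ (smallest stabilizer) and keeps $\sigma$ throughout the induction. Your route is viable, but to make it work you must show that in the $\sigma$-Gysin sequence the transfer $\tr^{\G}_{C_{2^{m-1}}}:\pi^{C_{2^{m-1}}}_{2j}(S^V\wedge H\uA)\to\pi^{\G}_{2j}(S^V\wedge H\uA)$ is injective for \emph{every} $j$, not just the top degree---this is what forces $\pi^{\G}_{\mathrm{odd}}(S^{V+\sigma}\wedge H\uA)=0$. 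You do not address this. The paper avoids it by a direct cellular argument: with $\sigma$ present, the generator $g$ acts on the top cell of $S^W$ with degree $-1$, so the boundary $1-g$ becomes multiplication by $2$ on $A(C_{2^r})$, which is injective since $A(C_{2^r})$ is torsion-free. That single observation is the heart of the $p=2$ case, and it is missing from your outline.
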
 
		\begin{proof}
			Fix $ V=\sum_{i=1}^s b_i\lambda_i+\sigma\in \bigstar_{\div}$, $ \lambda_i\ne 1 $. Suppose $ \lambda_r\in V $ have smallest stabilizer. 	As in Theorem \ref{thm A cpm}, we proceed  by induction on the number of representations assuming   the result for $ W=V-\lambda_r $. So it suffices to investigate $ \pi_*^\G(S^V\smas H\uA)  $ in degrees $ *= |W|, |W+1|,$ and $ |W|+2 $. Let $ \lambda_\ell\in W $ have the smallest stabilizer, and $ \sigma:\G/H\to \{\pm 1\} $ where $ [\G:H]=2 $. Since $ \tr^G_H $ is injective, the equivariant cellular chain complex yields  $ \pi_{1-\sigma}^\G( H\uA) =0$ and  $ \pi_{-\sigma}^\G( H\uA) =A(G)/A(H)\cong \Z $.\par  
			We claim $ \pi_{|V|}^\G(S^V\smas H\uA) =0$ if $ \sigma\in V $. Following \eqref{celldecom}, we get the equivariant cell complex 
			\[
			A(C_{2^r})\xrightarrow{\zeta_{|V|}}A(C_{2^r})\xrightarrow{\partial_{|W|+1}} A(C_{2^\ell})\xrightarrow{\zeta_{|W|}}A(C_{2^\ell})\xrightarrow{}\cdots \xrightarrow{\partial_2}A(H)\xrightarrow{\tr^G_H}A(G),
			\] 			
			where the map $\zeta_{|V|}  $ is induced by the map $ 1-g:\G/C_{2^r}\to \G/C_{2^r} $.
			\begin{equation*}
				\begin{tikzcd}
					\G/C_{2^r}\smas S^W \arrow[rr, "(1-g)\wedge \id"] \arrow[d, "\simeq"'] & &\G/C_{2^r}\smas S^W \arrow[d, "\simeq"] \\
					\G/C_{2^r}\smas S^{\dim W} \arrow[rr, "(1-g)\wedge \id"]                    &  & \G/C_{2^r}\smas S^{\dim W}.                    
				\end{tikzcd}
			\end{equation*}
			Since $ g $ acts on $ S^{\dim W} $ via a map of degree $ -1$, the bottom map of the above diagram becomes $ 1+g $. Hence $ \zeta_{|V|}  $ is the  multiplication by $ 2 $ map, which is injective. Hence our claim follows. This implies $  \pi_{|W|+1}^{\G}(S^V \smas H \uA)\cong A(C_{2^r})/2$.\par
			Let $ *=|W| $. We have the long exact sequence 
			\begin{equation*}
				\cdots \rightarrow  \pi_{|W|}^\G(S^W \smas H\uA) \xrightarrow {a_{\lambda_r}} \pi_{|W|}^{\G}(S^V \smas H \uA) \rightarrow \pi_{|W|-1}^\G(S{(\lambda_r)}_+ \smas S^W \smas H \uA) \rightarrow \cdots 
			\end{equation*} 		
			The first term is zero by the above claim, and the last term vanishes as $ \pi_{-i}^\G({\G/C_{2^r}}_+\smas H \uA) =0$ for $ i=1,2 $. Thus $ \pi_{|W|}^{\G}(S^V \smas H \uA)=0 $.\par
			Consider the long exact sequence 
			\begin{equation*}
				\xymatrix@R=0.5cm@C=0.5cm{
					0\to	\pi_{|W|+1}^H(S^{V-\sigma} \smas H\uA)
					\ar[d]^{\cong}\ar[r]^{\cdot 2} 
					& \pi_{|W|+1}^{\G}(S^{V-\sigma} \smas H \uA)
					\ar[d]^{\cong}\ar[r]^{a_\sigma} 
					& \pi_{|W|+1}^\G( S^V \smas H \uA)
					\ar[r] 
					& \pi_{|W|}^H(S^{V-\sigma} \smas H\uA)\to \cdots
					\\ 
					A(C_{2^r}) & A(C_{2^r}) & &
				}
			\end{equation*}
			The last term is zero by Theorem \ref{thm A cpm} as $ |W| $ is odd and $V-\sigma\in \bigstar^e  $. Hence by adjoining the generator $ a_\sigma $ to the generators list of $ \pi^\G_{\bigstar^e}(H\uA) $, we obtain all  generators for $ \pi^\G_{\bigstar_\div}(H\uA) $.
		\end{proof}
		
	\end{mysubsection}
	
	\begin{mysubsection}{The square free case}\label{subsecsqA}
		Let $ G=C_n=C_{p_1}\cdots C_{p_k} $ where $ p_i $ are distinct primes. The reason that the computation is relatively easier in this case than general cyclic groups is that for each subgroup $ H $, (which is also consequently of square free order) $ A(G)\xrightarrow{\res^G_H} A(H) $ is surjective. As a consequence, for a representation $ \lambda^d $, the notation $ u_{\lambda^d}^{(C_d)} $ may be replaced with our usual $ u_{\lambda^d} $ which stands for a single class satisfying the relation $ I_{\res^G_{C_d}}\cdot u_{\lambda^d}=0 $.\par
		Throughout this section,  for $I\subseteq \uk  $, $ I^c $  denote the complement $ \uk\setminus I $. Suppose $ |I|= \prod_{i\in I}p_i$. We write $C_I  $ to denote the group $ C_{|I|} $ and  $ \lambda^I $ means the representation $ \lambda^{|I|} $.\par
		Suppose $ n $ is even. Let $ \mcal{R}_n $ be the ring generated over the Burnside ring $ A(G) $ by the elements $ a_\sigma, u_{\lambda^I}, a_{\lambda^I}$ in $ \pi_{\bigstar_\div}^G(H\uA) $ satisfying the  relations 
		\[
		a_\sigma^2=a_{\lambda^{n/2}}, \quad			I_{\tr^G_{C_{n/2}}}a_{\sigma}=0, \quad 	I_{\tr^G_{C_I}}a_{\lambda^I}=0, \quad I_{\res^G_{C_I}}u_{\lambda^I}=0, 
		\]
		\[
		\Big[\frac{G}{C_{I^c}\cup (I\cap L)}\Big] u_{\lambda^I}a_{\lambda^L}=\Big[\frac{G}{C_{L^c}\cup (I\cap L)}\Big] u_{\lambda^L}a_{\lambda^I},
		\]
		for $ I,L\subseteq \uk $ and $ |I|,|L|\ne n $. In case  $n  $ is odd, then define $ \RR_n $ to be the same as above except that the class $ a_\sigma $ and the relation  $I_{\tr^G_{C_{n/2}}}a_{\sigma}=0  $ does  not appear. The main goal of this section is to show that the ring $ \pi_{\bigstar_\div}^G(H\uA)$ is  isomorphic to $ \RR_n $.\par
		For a $ G $ Mackey functor $ \uM $,  $ \uM_p =\res^G_{C_p}\uM$. 
		We briefly describe our approach.
		The unit map $ \eta_{\Z}: \uA_p\to \uZ_p $ for the commutative monoid  $ \uZ_p $,  gives rise to the short exact sequence of Mackey functors 
		\begin{myeq}\label{eq: ses of C_p mfunctors }
			0\to \bZ_p\xrightarrow{\kappa} \uA_p\xrightarrow{\eta_{\Z}} \uZ_p\to 0,
		\end{myeq}
		where  the map $ \kappa$ is given by $1\mapsto  p\cdot[C_p/C_p]-[C_p/e] $ at the orbit $  C_p/C_p$.
		The  map $ \uA_p\to \bZ_p $ classifying $ 1\in \Z=\Hom_{\MM_{C_p}}(\uA_p,\bZ_{p}) $, fits into  the short exact sequence 
		\begin{equation*}
			0\to \uZ^*_p\to \uA_p \xrightarrow{} \bZ_p\to 0.
		\end{equation*}
		The composite $ \bZ_p\to \uA_p\to \bZ_p $ is multiplication by $ p $.
		By \eqref{eq: ses of C_p mfunctors }, we have the following short exact sequence of Mackey functors
		\begin{myeq}\label{eq: ses of G=C_n mfunctors}
			0\to \bZ_{p_i}\boxtimes\uA_{I\setminus\{i\}}\boxtimes \uZ_{{I^c}}\xrightarrow{\kappa} \uA_{I}\boxtimes \uZ_{{I^c}} \xrightarrow{\eta_{\Z}} \uA_{I\setminus\{i\}}\boxtimes \uZ_{{I^c}\cup \{i\}}\to 0.
		\end{myeq}
		Now observe that  $ \uA_I\boxtimes\uZ_{I^c} $ is a commutative Green functor. Hence the map $ \RR_n\to \pi_{\bigstar_\div}^G(H\uA) $ induces 
		\begin{myeq}\label{phimap}
			\Phi: \RR_n\otimes_{A(G)} A(C_I)\to \pi^G_{\bigstar_\div}(H(\uA_I\boxtimes\uZ_{I^c})),
		\end{myeq}
		via the composition $ H\uA\to H(A_I\boxtimes\Z_{I^c}) $, where the right-hand side is an $ \RR_n $-module via the unit map $ \uA\to  \uA_I\boxtimes \uZ_{I^c}$, and $ A(G) $ acts on $ A(C_I) $ via the restriction map. We show that $ \Phi $ is an isomorphism.\par
		Define $ u_{\lambda^d}^{A_I\boxtimes\Z_{I^c}} $ to be the image of $  u_{\lambda^d} $ under the ring homomorphism   $ \pi^G_{2-\lambda^d}(H\uA)\to \pi^G_{2-\lambda^d}(H(\uA_I\boxtimes\uZ_{I^c})) $   induced by the unit map. This definition also applies to any commutative monoid $ \uM $.\par 
		The restriction map, $ \res: A(G)\to A(C_I) $, sends 
		\begin{myeq}\label{eq: restriction map A(G) to A(C_I)}
			\res(\alpha_{p_\ell})=\begin{cases}
				\alpha_{p_\ell} & \text{if $\ell\in I$}\\
				p_\ell & \text{if $\ell\not\in I$},
			\end{cases}
		\end{myeq}
		where $ \alpha_{p_\ell} $ is as defined  in \eqref{gensq}.	The relation   $ I_{\res^G_{C_d}}u_{\lambda^d}=0 $ implies 
		$$
		(\alpha_{p_i}-p_i)\,u^{A_I\boxtimes \Z_{I^c}}_{\lambda^d}=0, \quad     \textup{~for each~} i\in I {\textup{~such that~}}\ p_i\nmid d. 
		$$
		We simply write the elements $ u_{\lambda^d}^M $ or $ a_{\lambda^d}^M $ in $ \pi_{\bigstar_\div}(H\uM) $ as  $ u_{\lambda^d} $ or $ a_{\lambda^d} $ when there is no confusion  likely about which coefficient we are referring to. The following result is a consequence of \cite[Proposition 6.14]{BG20}.
		\begin{prop}\label{prop: ses of R_n modules}
			Let $I$ be a subset $\underline{k}$ and $i \in I$. For $\alpha \in RO(G)$, we have a short exact sequence of graded $ \RR_n $-modules
			\begin{equation*}
				0 \to \pi^G_{\bigstar_\div}(H(\bZ_{p_i}\boxtimes \uA_{I \setminus \{ i\}}\boxtimes \uZ_{I^c})) \xrightarrow{\kappa_*} \pi^G_{\bigstar_\div}(H(\uA_I\boxtimes\uZ_{I^c})) \xrightarrow{{\eta_{\Z}}_*} \pi^G_{\bigstar_\div}(H(\uA_{I\setminus \{i\}}\boxtimes\uZ_{{I^c}\cup \{i\}})) \to 0. 
			\end{equation*}
		\end{prop}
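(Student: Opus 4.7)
The plan is to obtain the short exact sequence by applying $\pi^G_\alpha$ to the cofibre sequence of Eilenberg--MacLane spectra
\[ H(\bZ_{p_i}\boxtimes\uA_{I\setminus\{i\}}\boxtimes\uZ_{I^c}) \xrightarrow{H\kappa} H(\uA_I\boxtimes\uZ_{I^c}) \xrightarrow{H\eta_\Z} H(\uA_{I\setminus\{i\}}\boxtimes\uZ_{I^c\cup\{i\}}) \]
associated to the short exact sequence of Mackey functors \eqref{eq: ses of G=C_n mfunctors}. Since $\uA$ is the unit for $\Box$, every Mackey functor is canonically an $\uA$-module, so the three spectra above are $H\uA$-module spectra and the maps induced by $\kappa$ and $\eta_\Z$ are $H\uA$-module maps. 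Consequently $\pi^G_\alpha$ applied to this cofibre sequence yields a long exact sequence of $\pi^G_\bigstar(H\uA)$-modules; restricting scalars along the ring map $\RR_n \to \pi^G_{\bigstar_\div}(H\uA)$ described just before \eqref{phimap} turns it into a long exact sequence of $\RR_n$-modules.

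The content of the proposition is then equivalent to the vanishing, for every $\alpha\in\bigstar_\div$, of the connecting homomorphism
\[ \partial : \pi^G_\alpha(H(\uA_{I\setminus\{i\}}\boxtimes\uZ_{I^c\cup\{i\}})) \longrightarrow \pi^G_{\alpha-1}(H(\bZ_{p_i}\boxtimes\uA_{I\setminus\{i\}}\boxtimes\uZ_{I^c})), \]
together with the analogous vanishing at grading $\alpha+1$; both source and target remain in the sector $\bigstar_\div$, which is closed under integer shifts. This is precisely the content of \cite[Proposition 6.14]{BG20}. I would carry out the proof by double induction on $|I|$ and on the \textit{positive weight} $\sum b_i + \epsilon$ of $\alpha = \ell - \epsilon\sigma - \sum_{d_i\mid n} b_i\lambda^{d_i}$. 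The base case $|I|=1$ reduces to comparing the computation of $\pi^G_{\bigstar_\div}(H\uA_{p_i})$ via the Tate square method of \S \ref{cpcase} with the known description of $\pi^G_{\bigstar_\div}(H\uZ)$: the generating Euler and orientation classes on the $\uZ$ side lift to the same-named classes on the $\uA$ side via the unit map, so $\eta_{\Z,*}$ is already surjective on a generating set.

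The main obstacle is the inductive step for general $I$, where one must coherently lift classes along $\eta_\Z$ in the presence of the more involved Frobenius and divisibility relations governing $\uA_I\boxtimes\uZ_{I^c}$. The strategy is the Bredon cohomology method: equip $S^V$ with the equivariant CW decomposition \eqref{celldecom} for the representation $V$ encoding the positive part of $\alpha$, and compare the three cellular cochain complexes. Since $\eta_\Z$ is an orbit-wise split surjection of Mackey functors, cellular cochains lift; the obstruction to a lift being a cocycle takes values in the cellular cochain groups for the kernel $\bZ_{p_i}\boxtimes\uA_{I\setminus\{i\}}\boxtimes\uZ_{I^c}$. Using the inductive description of $\pi^G_{\bigstar_\div}(H(\uA_J\boxtimes\uZ_{J^c}))$ for proper subsets $J\subsetneq I$, together with the Frobenius relations of Definition \ref{defmf} and the $a$-, $u$-, $au$-relations \eqref{eq: a-relation in A-coeff}, \eqref{eq: u-relation for A-coeff}, \eqref{eq:au rel general}, one checks that this obstruction vanishes, producing the required lift and hence the surjectivity of $\eta_{\Z,*}$ at each $\alpha\in\bigstar_\div$.
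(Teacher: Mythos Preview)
Your identification of the argument's structure --- long exact sequence in $\pi^G_\bigstar$ from the cofibre sequence of Eilenberg--MacLane spectra, with the $\RR_n$-module structure inherited by restriction along $\RR_n\to\pi^G_{\bigstar_\div}(H\uA)$, and the content being vanishing of the connecting map --- is exactly right, and your citation of \cite[Proposition 6.14]{BG20} matches the paper, which simply records the result as a consequence of that reference without reproving it.

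However, the direct proof you sketch beyond the citation has a circularity problem. In the inductive step you invoke ``the inductive description of $\pi^G_{\bigstar_\div}(H(\uA_J\boxtimes\uZ_{J^c}))$ for proper subsets $J\subsetneq I$'', but that description is precisely the conclusion of Theorem~\ref{Thm: Ring structure main Thm}, whose proof \emph{uses} Proposition~\ref{prop: ses of R_n modules} as its main structural input. You cannot assume the answer for smaller $I$ unless you run the two inductions simultaneously, and you have not set that up. The base case is also not quite as stated: \S\ref{cpcase} treats $G=C_p$, not $C_n$ with the $\boxtimes$ construction, so the reduction you describe needs the identification of Proposition~\ref{prop: coh ring of bZ_pBoxM} (or its analogue) rather than a direct appeal to the Tate square for $C_p$. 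Finally, ``one checks that this obstruction vanishes'' is the entire content of the proposition and is not actually carried out; an orbit-wise splitting of $\eta_\Z$ lifts cochains but not cocycles, and the Frobenius relations do not by themselves force the coboundary obstruction to vanish. If you want a self-contained argument, a cleaner route is to prove odd-degree vanishing for all three coefficient systems in $\bigstar_\div$ (using Theorem~\ref{thm ring z ev} for $\uZ$, the isomorphism \eqref{eq: iso of coh gr for <Z>_p coeff} for the kernel term, and an induction for the middle term), which breaks the long exact sequence into the desired short exact sequences.
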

		We know that the ring $\pi_{\bigstar_\div}^G(H\uZ) $ is generated by  the classes $ u_{\lambda^d}^\Z, a_{\lambda^d}^\Z $ modulo the relations $\frac{n}{d} a_{\lambda^d}=0$, and $\frac{d}{\gcd(d,s)}a_{\lambda^s}u_{\lambda^d}= \frac{s}{\gcd(d,s)}a_{\lambda^d}u_{\lambda^s}$ by  \cite[Theorem 5.25]{BG20}, (also see \S \ref{secpositiveHZ}).
		\begin{prop}\label{lem: Cohomology of point with Z-coefficient}
			As an $ \mcal{R}_n$-module
			\[
			\pi_{\bigstar_\div}^G(H\uZ) \cong \mcal{R}_n{\otimes}_{A(G)} \Z.
			\]
		\end{prop}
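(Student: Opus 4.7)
The plan is to exploit the unit map $H\uA \to H\uZ$ and identify both sides of the claimed isomorphism with the same presented graded ring. The unit map induces an $A(G)$-algebra homomorphism $\pi_{\bigstar_\div}^G(H\uA) \to \pi_{\bigstar_\div}^G(H\uZ)$; composing with the canonical map $\RR_n \to \pi_{\bigstar_\div}^G(H\uA)$ that defines $\RR_n$ yields a map $\Psi\colon \RR_n \to \pi_{\bigstar_\div}^G(H\uZ)$. Since $\pi_0^G(H\uZ) = \Z$ and in the constant Mackey functor $[G/H]$ acts on $\Z$ as multiplication by $[G:H]$ (restrictions are identities, transfer is the index), the $A(G)$-module structure on $\pi_{\bigstar_\div}^G(H\uZ)$ factors through the augmentation $\epsilon\colon A(G) \to \Z$ sending $\alpha_{p_\ell}\mapsto p_\ell$. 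Hence $\Psi$ descends to $\bar\Psi\colon \RR_n \otimes_{A(G)} \Z \to \pi_{\bigstar_\div}^G(H\uZ)$.

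Next I would compute the presentation of $\RR_n \otimes_{A(G)} \Z$ by substituting $\alpha_{p_\ell} = p_\ell$ in each defining relation of $\RR_n$. For the transfer ideal, $\epsilon(I_{\tr^G_{C_I}}) = (n/|I|)\Z$ (since $\tr^G_{C_I}[C_I/K] = [G/K]$ and $\epsilon[G/K] = [G:K]$, with minimum attained at $K = C_I$), so the $a$-relation becomes $(n/|I|)\, a_{\lambda^I} = 0$; similarly $2a_\sigma = 0$ when $n$ is even. For the restriction ideal, $\epsilon$ factors as $A(G) \twoheadrightarrow A(C_I) \to \Z$, so $I_{\res^G_{C_I}} \subseteq \ker\epsilon$ and the $u$-relations become trivial after tensoring. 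For the $au$-relation, using $I^c \cup (I \cap L) = I^c \cup L$, one has $\epsilon[G/C_{I^c \cup L}] = [G:C_{I^c \cup L}] = |I \setminus L| = |I|/\gcd(|I|,|L|)$, and the $au$-relation reduces to the familiar identity $\tfrac{|I|}{\gcd(|I|,|L|)}\, a_{\lambda^L} u_{\lambda^I} = \tfrac{|L|}{\gcd(|I|,|L|)}\, a_{\lambda^I} u_{\lambda^L}$.

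By \cite[Theorem 5.25]{BG20} (equivalently Theorem B), $\pi_{\bigstar_\div}^G(H\uZ)$ is the polynomial $\Z$-algebra on the classes $a_{\lambda^d}, u_{\lambda^d}$ (and $a_\sigma$ when $n$ is even) modulo exactly these relations. Therefore $\bar\Psi$ is an isomorphism of presented graded $\Z$-algebras, sending generators to generators and the ideal of relations onto itself.

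The main technical content lies in verifying the combinatorial simplifications above, especially the identity $I^c \cup (I \cap L) = I^c \cup L$ used in the $au$-relation and the computation of $\epsilon(I_{\tr^G_{C_I}})$. The square-free hypothesis enters crucially in making $\res^G_{C_I}$ surjective, which is what permits the ring $\RR_n$ to be formulated using single generators $u_{\lambda^d}$ rather than the full system $u_{\lambda^d}^{(H)}$ of Proposition \ref{prop:u-rel}; after augmentation, these in any case collapse because every element of the restriction kernel is annihilated.
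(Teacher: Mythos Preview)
Your proposal is correct and takes essentially the same approach as the paper: the paper simply cites the known presentation of $\pi_{\bigstar_\div}^G(H\uZ)$ from \cite[Theorem 5.25]{BG20} (also proved independently in \S\ref{secpositiveHZ}) and leaves it to the reader to observe that applying the augmentation $\alpha_{p_\ell}\mapsto p_\ell$ to the defining relations of $\mathcal{R}_n$ yields exactly that presentation. You have carried out that verification explicitly, and your computations (in particular $\epsilon(I_{\tr^G_{C_I}}) = (n/|I|)\Z$, the vanishing of the $u$-relations since $I_{\res^G_{C_I}}\subset\ker\epsilon$, and the set identity $I^c\cup(I\cap L)=I^c\cup L$ reducing the $au$-relation to the standard form) are correct.
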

		Let $ \alpha\in RO(G) $ and let $ \uM $ be a  $ G/C_p $-Mackey functor. We note from \cite[Proposition 6.12]{BG20}, that 
		\begin{myeq}\label{eq: iso of coh gr for <Z>_p coeff}
			\pi_{\alpha}^G(H(\bZ_p\boxtimes \uM))\cong \pi^{G/C_p}_{\alpha^{C_p}}({H\uM}).
		\end{myeq}
		We show that the ring $\pi_{\bigstar_\div}^G(H(\bZ_p\boxtimes \uM)) $ can be determined from $ \pi^{G/C_p}_{\bigstar_\div}(H{\uM}) $ by adjoining certain generators.
		\begin{prop}\label{prop: coh ring of bZ_pBoxM}
			Let $ \uM $ be a commutative  $ G/C_p $-Green functor.  We have
			\[
			\pi_{\bigstar_\div}^G(H(\bZ_p\boxtimes \uM))\cong \pi^{G/C_p}_{(\bigstar_\div)^{C_p}}({H\uM})\big[a_{\lambda^{j}}^{\bZ_p\msp\boxtimes\msp \uM}\mid p\nmid j\big].
			\]
			Under this isomorphism the elements $ u_{\lambda^d}^M $ and  $ a_{\lambda^d}^M $ correspond to $ u_{\lambda^{pd}}^{\bZ_p\msp\boxtimes\msp \uM}$ and $ a_{\lambda^{pd}}^{\bZ_p\msp\boxtimes\msp \uM}$, where $ p\nmid d $.
		\end{prop}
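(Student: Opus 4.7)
The plan is to invoke the isomorphism \eqref{eq: iso of coh gr for <Z>_p coeff} and identify the polynomial structure on the LHS via a Gysin-type vanishing argument. The central observation is that the assignment $\alpha\mapsto\alpha^{C_p}$ surjects $\bigstar_\div\subset RO(G)$ onto $\bigstar_\div^{G/C_p}\subset RO(G/C_p)$: for $d\mid n$ with $p\mid d$ one has $(\lambda^d)^{C_p}\cong\lambda^{d/p}$ as a $G/C_p$-representation, while for $j\mid n$ with $p\nmid j$ one has $(\lambda^j)^{C_p}=0$. The kernel of this surjection is freely generated by the representations $\lambda^j$ with $p\nmid j$, which are precisely the Euler classes being adjoined on the right hand side.

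The key vanishing result I would establish first is that for each $j\mid n$ with $p\nmid j$, the $G$-spectrum $S(\lambda^j)_+\wedge H(\bZ_p\boxtimes\uM)$ has trivial $RO(G)$-graded homotopy. For this, note that for any $V\in RO(G)$, by the abelian structure of $G$, every orbit appearing in an equivariant CW decomposition of $S^V\wedge S(\lambda^j)$ has stabilizer contained in $\ker(\lambda^j)=C_j$. Since $p\nmid j$ forces $C_p\not\subseteq C_j$, the Mackey functor $\bZ_p\boxtimes\uM$ vanishes on all orbits $G/L$ with $L\subseteq C_j$, and the equivariant cellular chain complex computing the corresponding homology is identically zero. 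Consequently, via the Gysin cofibre sequence $S(\lambda^j)_+\to S^0\xrightarrow{a_{\lambda^j}}S^{\lambda^j}$, multiplication by $a_{\lambda^j}^{\bZ_p\boxtimes\uM}$ is an isomorphism in every $RO(G)$-grading.

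Combining this with \eqref{eq: iso of coh gr for <Z>_p coeff}, any $\alpha\in\bigstar_\div$ decomposes uniquely as $\alpha=\beta-\sum c_j\lambda^j$ where $\beta$ collects the trivial term (plus possibly $\sigma$) together with the $\lambda^d$-summands for $p\mid d$, and the sum runs over $j\mid n$ with $p\nmid j$. The vanishing yields
\[
\pi_\alpha^G(H(\bZ_p\boxtimes\uM))\xleftarrow{\cdot\prod a_{\lambda^j}^{c_j}}\pi_\beta^G(H(\bZ_p\boxtimes\uM))\xrightarrow{\cong}\pi^{G/C_p}_{\beta^{C_p}}(H\uM),
\]
where the second isomorphism is \eqref{eq: iso of coh gr for <Z>_p coeff}. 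Since distinct monomials in the $a_{\lambda^j}^{\bZ_p\boxtimes\uM}$ live in distinct $RO(G)$-gradings, these generators are algebraically independent over $\pi^{G/C_p}_{\bigstar_\div}(H\uM)$, and the induced ring map from the polynomial algebra is an isomorphism. The correspondence $u_{\lambda^d}^M\leftrightarrow u_{\lambda^{pd}}^{\bZ_p\boxtimes\uM}$ and $a_{\lambda^d}^M\leftrightarrow a_{\lambda^{pd}}^{\bZ_p\boxtimes\uM}$ then follows from the naturality of \eqref{eq: iso of coh gr for <Z>_p coeff} under the unit $H\uM\to H(\bZ_p\boxtimes\uM)$, combined with the identification $(\lambda^{pd})^{C_p}=\lambda^d$ as $G/C_p$-representations.

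The main subtlety I anticipate concerns the sign representation when $p=2$, where $\sigma^{C_2}$ depends on whether $4\mid n$ and the relation $2\sigma=\lambda^{n/2}$ couples $a_\sigma$ with $a_{\lambda^{n/2}}$; handling this will require bookkeeping of which generators are adjoined in each residue class of $n\bmod 4$, but does not affect the core vanishing argument above. Apart from this, the proof is a rather direct application of the Gysin sequence combined with the structural property that $\bZ_p\boxtimes\uM$ is supported only on orbits containing $C_p$.
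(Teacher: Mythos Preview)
Your proposal is correct and follows essentially the same approach as the paper: both reduce to $(\bigstar_\div)^{C_p}$ by showing that multiplication by $a_{\lambda^j}$ (for $p\nmid j$) is an isomorphism, then invoke \eqref{eq: iso of coh gr for <Z>_p coeff} and the lax monoidality of fixed points. The one difference is that the paper extracts the $a_{\lambda^j}$-isomorphism directly from \eqref{eq: iso of coh gr for <Z>_p coeff} (since $(\lambda^j)^{C_p}=0$ when $p\nmid j$, the two sides of that formula already agree before and after multiplying by $a_{\lambda^j}$), so your separate Gysin/vanishing argument, while correct, is a slightly longer path to a fact you have already granted.
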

		
		\begin{proof} 
			By \eqref{eq: iso of coh gr for <Z>_p coeff}, if $ p\nmid d $ we have an isomorphism
			\[
			\pi^{G}_{\alpha+\lambda^{d}}({H(\bZpboxM)})\xrightarrow[\cong]{~a_{\lambda^{d}}~}\pi^{G}_{\alpha}({H(\bZpboxM)}).
			\]
			In this way for $ \alpha\in \bigstar_{\div} $, we may remove all the multiples of $ \lambda^d $ for $ p\nmid d $, and are  finally left with the $ C_p $-fixed part $ (\bigstar_\div)^{C_p} $. Thus,  we can  identify the rings
			\[ 
			\pi_{\bigstar_\div}^G(H(\bZ_p\boxtimes \uM))\cong \pi^{G}_{(\bigstar_\div)^{C_p}}(H({\bZpboxM}))\big[a_{\lambda^d}\mid p\nmid d\big].
			\]
			As the fixed point spectrum functor is lax monoidal \cite[Proposition V.3.8]{MM02},
			we conclude from \eqref{eq: iso of coh gr for <Z>_p coeff} that the following   rings are  isomorphic
			\begin{equation*}
				\pi^{G}_{(\bigstar_\div)^{C_p}}(H({\bZpboxM}))\cong \pi^{G/C_p}_{(\bigstar_\div)^{C_p}}(H{\uM}).
			\end{equation*}
		\end{proof}
		We now prove the main theorem.
		\begin{theorem}\label{Thm: Ring structure main Thm}
			The ring  $ \pi_{\bigstar_\div}^G(H\uA)\cong \mathcal{R}_n$,  and as an $ \mcal{R}_n$-module
			\[
			\pi^G_{\bigstar_\div}(H(\uA_I\boxtimes\uZ_{I^c}))\cong \mcal{R}_n{\otimes}_{A(G)} A(C_I).
			\]
			Furthermore,
			\[
			\pi^G_{\bigstar_\div}(H(\bZ_{p_i}\boxtimes \uA_{I \setminus \{ i\}}\boxtimes \uZ_{I^c}))\cong (p_i-\alpha_{p_i})\big(\mcal{R}_n{\otimes}_{A(G)} A(C_I)\big)
			\]
			as an ideal of $ \pi^G_{\bigstar_\div}(H(\uA_I\boxtimes\uZ_{I^c})) $. 
		\end{theorem}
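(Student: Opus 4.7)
The plan is to establish all three assertions simultaneously by induction on $|I|$, with a secondary induction on the number $k$ of distinct primes dividing $n$. The base case $I=\emptyset$ gives $\uA_\emptyset\boxtimes\uZ_{\uk}\cong \uZ$ and $A(C_\emptyset)\cong\Z$, so part (b) reduces to Proposition~\ref{lem: Cohomology of point with Z-coefficient}; part (c) is vacuous, and part (a) will appear at the end as the case $I=\uk$.

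For the inductive step, fix $i\in I$. Using the presentation \eqref{gensq} together with \eqref{eq: restriction map A(G) to A(C_I)}, one checks that the kernel of $\res\colon A(C_I)\to A(C_{I\setminus\{i\}})$ is the principal ideal generated by $p_i-\alpha_{p_i}$, producing a short exact sequence
\begin{equation*}
0 \to (p_i-\alpha_{p_i})A(C_I)\to A(C_I)\to A(C_{I\setminus\{i\}})\to 0.
\end{equation*}
Tensoring with $\mcal{R}_n$ over $A(G)$ and comparing with the short exact sequence of Proposition~\ref{prop: ses of R_n modules} gives a commutative diagram of short exact sequences whose vertical arrows are $\Phi$, its restriction to the ideal, and $\Phi_{I\setminus\{i\}}$. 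The right vertical map is an isomorphism by the inductive hypothesis on $|I|$, since $(I\setminus\{i\})^c = I^c\cup\{i\}$.

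To identify the left vertical map, apply Proposition~\ref{prop: coh ring of bZ_pBoxM} with $\uM = \uA_{I\setminus\{i\}}\boxtimes\uZ_{I^c}$, which is a $G/C_{p_i}$-Green functor. The inductive hypothesis on $k$ applied to $G/C_{p_i}$ and to the subset $I\setminus\{i\}\subseteq \uk\setminus\{i\}$ identifies $\pi^{G/C_{p_i}}_{(\bigstar_\div)^{C_{p_i}}}(H\uM)$ with $\mcal{R}_{n/p_i}\otimes_{A(G/C_{p_i})}A(C_{I\setminus\{i\}})$ restricted to $C_{p_i}$-fixed gradings. Adjoining the Euler classes $a_{\lambda^j}$ for $p_i\nmid j$ from Proposition~\ref{prop: coh ring of bZ_pBoxM} then produces exactly the $\mcal{R}_n$-module of monomials in Euler and orientation classes in which at least one factor $a_{\lambda^j}$ with $p_i\nmid j$ occurs; matching these with $(p_i-\alpha_{p_i})$-multiples inside $\mcal{R}_n\otimes_{A(G)}A(C_I)$ (and recalling that the class $\kappa(1)=p_i\cdot[C_{p_i}/C_{p_i}]-[C_{p_i}/e]$ restricts to $p_i-\alpha_{p_i}$) identifies the left vertical map with the ideal inclusion, yielding part (c). A five-lemma argument then shows $\Phi$ is an isomorphism, completing the induction for part (b); part (a) is the case $I=\uk$.

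The main obstacle is the correct identification of the left-hand term: one must (i) verify that $\Phi$ sends $p_i-\alpha_{p_i}$ onto the image of $\kappa_*$ applied to the unit $1\in\pi^G_{\bigstar_\div}(H(\bZ_{p_i}\boxtimes\uA_{I\setminus\{i\}}\boxtimes\uZ_{I^c}))$, which follows directly from the definition of $\kappa$ in \eqref{eq: ses of C_p mfunctors }, and (ii) book-keep carefully the generators $a_{\lambda^j}$ for $p_i\nmid j$ adjoined in Proposition~\ref{prop: coh ring of bZ_pBoxM} against the $(p_i-\alpha_{p_i})$-multiples of monomials in $\mcal{R}_n$. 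The $au$-relation \eqref{eq:au rel general} and the specific form of the relations in $\mcal{R}_n$ are precisely what make this matching go through; the case $p_i=2$ requires extra care because the class $a_\sigma$ (with $a_\sigma^2=a_{\lambda^{n/2}}$) enters the tally.
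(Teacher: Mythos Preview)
Your overall architecture is exactly that of the paper: induct on $|I|$ (with an implicit outer induction on the number of prime factors of $n$), compare the short exact sequence obtained from the restriction $A(C_I)\to A(C_{I\setminus\{i\}})$ with the one from Proposition~\ref{prop: ses of R_n modules}, identify the kernel as the principal ideal on $p_i-\alpha_{p_i}$, and finish with the five-lemma. The base case and the right-hand vertical isomorphism are handled correctly.

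The genuine gap is in your treatment of the left vertical map $\psi$. Your claim that Proposition~\ref{prop: coh ring of bZ_pBoxM} together with the induction on $k$ ``produces exactly the $\mcal{R}_n$-module of monomials in Euler and orientation classes in which at least one factor $a_{\lambda^j}$ with $p_i\nmid j$ occurs'' is incorrect on both ends. On the target side, Proposition~\ref{prop: coh ring of bZ_pBoxM} gives a \emph{polynomial} ring in the $a_{\lambda^j}$ ($p_i\nmid j$) over $\pi^{G/C_{p_i}}_{(\bigstar_\div)^{C_{p_i}}}(H\uM)$; in particular the degree-zero part (no such $a_{\lambda^j}$ at all) is present and carries the unit. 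On the source side, the ideal $(p_i-\alpha_{p_i})\bigl(\mcal{R}_n\otimes_{A(G)}A(C_I)\bigr)$ is not a span of monomials: the generator $p_i-\alpha_{p_i}$ itself is a pure Burnside-ring element in grading $0$, and one has $(p_i-\alpha_{p_i})u_{\lambda^L}=0$ whenever $i\notin L$ by the $u$-relation $I_{\res^G_{C_L}}u_{\lambda^L}=0$, so many ``monomials with an $a_{\lambda^j}$ factor'' multiplied by $p_i-\alpha_{p_i}$ vanish while others do not. The promised ``book-keeping'' therefore does not match the two sides, and no isomorphism is actually produced.

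What the paper does instead, and what you are missing, is to build an explicit $\mcal{R}_n$-module inverse. The unit map $\uA_{p_i}\to\bZ_{p_i}$ induces a surjection
\[
\gamma\colon \mcal{R}_n\otimes_{A(G)}A(C_I)\longrightarrow \pi^G_{\bigstar_\div}\bigl(H(\bZ_{p_i}\boxtimes\uA_{I\setminus\{i\}}\boxtimes\uZ_{I^c})\bigr)
\]
whose kernel is generated by $\alpha_{p_i}$ and by the $u_{\lambda^d}$ with $p_i\nmid d$ (this is where Proposition~\ref{prop: coh ring of bZ_pBoxM} and the induction on $k$ are actually used, to identify generators of the target). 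The $\mcal{R}_n$-module map $f\colon \mcal{R}_n\otimes_{A(G)}A(C_I)\to\II$ sending $1\mapsto p_i-\alpha_{p_i}$ kills both types of generators of $\ker\gamma$ (by \eqref{genpm} and \eqref{eq: u-relation for A-coeff}), hence descends to a map $\varphi$ with $\varphi(1)=p_i-\alpha_{p_i}$. Since $\kappa_*(1)=p_i-\alpha_{p_i}$ and $\kappa_*$ is injective, one gets $\psi(p_i-\alpha_{p_i})=1$; as $\psi$ and $\varphi$ are $\mcal{R}_n$-linear and send generators to each other, they are mutually inverse. This replaces your ``matching'' step with a two-line verification and closes the gap.
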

		\begin{proof}
			We induct over the cardinality of $ I $, the case $ I=\emptyset $ follows by Proposition  \ref{lem: Cohomology of point with Z-coefficient}. Assume that the statement holds for $ I\setminus \{i\} $. We show that the middle horizontal map $ \Phi $ is an isomorphism in the following commutative diagram in which columns are exact.
			\begin{equation*}
				\begin{tikzcd}[scale cd=.90]
					0\ar[d]  & 0\ar[d]\\
					\II \ar[d]\arrow[r,"\psi"]&\pi^G_{\bigstar_\div}(H(\bZ_{p_i}\boxtimes \uA_{I \setminus \{ i\}}\boxtimes \uZ_{I^c}))\arrow[d,"\kappa_*"] \\  \RR_n\otimes_{A(G)} A(C_I)\arrow[r,"\Phi"]\ar[d]&\pi^G_{\bigstar_\div}(H(\uA_I\boxtimes\uZ_{I^c})) \arrow[d,"{\eta_{\Z}}_*"] \\ \RR_n\otimes_{A(G)} A(C_{I\setminus \{i\}}) \arrow[r,"\cong"]\ar[d]&\pi^G_{\bigstar_\div}(H(\uA_{I\setminus \{i\}}\boxtimes\uZ_{{I^c}\cup \{i\}})) \arrow[d,""] \\
					0&0
				\end{tikzcd}
			\end{equation*}	
			The induction hypothesis tells us that the bottom row is an isomorphism. The restriction map $ A(C_I)\to A(C_{I\setminus \{i\}})$ induces the ring
			homomorphism
			\[
			\RR_n{\otimes}_{A(G)} A(C_I)\to \RR_n{\otimes}_{A(G)} A(C_{I\setminus \{i\}}),
			\]
			and let  $ \II$ denote the kernel. One may use  an equation similar to \eqref{eq: restriction map A(G) to A(C_I)}  to see that  the ideal $ \II $ is generated by $ (p_i-\alpha_{p_i}) $. Restricting $ \Phi $ to $ \II $, we obtain the map $ \psi $. Next, we show that there exists a map in the reverse direction
			\begin{myeq}\label{eq: reverse map in main thm of ring structure}
				\varphi: \pi^G_{\bigstar_\div}(H(\bZ_{p_i}\boxtimes \uA_{I \setminus \{ i\}}\boxtimes \uZ_{I^c}))\to \II,
			\end{myeq}
			such that $ \varphi\circ \psi $ is identity. By Proposition \ref{prop: coh ring of bZ_pBoxM}, the left-hand side of \eqref{eq: reverse map in main thm of ring structure} reduces to a smaller subgroup, and we assume the result for it.  In order to construct the map $ \varphi $, we first identify $  \pi^G_{\bigstar_\div}(H(\bZ_{p_i}\boxtimes \uA_{I \setminus \{ i\}}\boxtimes \uZ_{I^c})) $ as a quotient of $ \RR_n\otimes_{A(G)} A(C_I) $.  The unit map $  \uA_{p_i}\to \bZ_{p_i} $  induces an $ \RR_n $-module homomorphism 
			from $ \pi^G_{\bigstar_\div}(H(\uA_I\boxtimes\uZ_{I^c}))  $ to $  \pi^G_{\bigstar_\div}(H(\bZ_{p_i}\boxtimes \uA_{I \setminus \{ i\}}\boxtimes \uZ_{I^c}))  $.  In particular, this gives an $\RR_n$-module map
			\[
			\gamma: \RR_n\otimes_{A(G)} A(C_I)\to \pi^G_{\bigstar_\div}(H(\bZ_{p_i}\boxtimes \uA_{I \setminus \{ i\}}\boxtimes \uZ_{I^c})),
			\]
			that sends  $ \alpha_{p_j}\otimes 1$ to $ \alpha_{p_j} $ if $ j\ne i $, $a_{\lambda^s}{}\otimes 1$ to $ a_{\lambda^s} $ for all $ s $, and $ u_{\lambda^d}\otimes 1$ to $ u_{\lambda^d} $ if $ p_i\mid d$, with kernel generated by the elements $u_{\lambda^d}\otimes 1$ such that $p_i\nmid d$ and $\alpha_{p_i}\otimes 1 $. This shows that  the map $ \gamma $ is surjective.
			Next, consider the diagram
			\begin{center}
				\begin{tikzcd}
					\RR_n\otimes_{A(G)} A(C_I)	 \arrow[rr, "\gamma"] \arrow[rd, "f"] &  & \pi^G_{\bigstar_\div}(H(\bZ_{p_i}\boxtimes \uA_{I \setminus \{ i\}}\boxtimes \uZ_{I^c}))  \arrow[ld, "\varphi"', dashed]
					\\
					& \II  &  
				\end{tikzcd}
			\end{center}
			Here the map $ f $ is defined by  $ 1\mapsto  (p_i-\alpha_{p_i}) $, and therefore it takes  the element $ \alpha_{p_i}\otimes 1$ to 0  by \eqref{genpm}. Also, because of the relations in \eqref{eq: u-relation for A-coeff}, it sends  $ u_{\lambda^d}\otimes 1$  to 0  when $ p_i\nmid d $.  Consequently, there exists a $ \RR_n $-module map $ \varphi $ such that $ f =\varphi\circ \gamma$. Since $ \gamma $ is also a ring map, we get $ \varphi(1) =(p_i-\alpha_{p_i})$.  In order to determine the map $ \psi $, we observe that the graded module map
			\[
			\kappa_*: \pi^{G}_{0}(H(\bZ_{p_i}\boxtimes \uA_{I\setminus\{i\}}\boxtimes \uZ_{{I^c}}) ) \to   \pi^{G}_{0}(H(\uA_I\boxtimes \uZ_{I^c})),
			\]
			sends $ 1$ to $(p_i-\alpha_{p_i}) $ (see \eqref{eq: ses of C_p mfunctors }), and $ \Phi(p_i-\alpha_{p_i})=(p_i-\alpha_{p_i}) $. These together with the fact that  $ \kappa_* $ is injective by Proposition \ref{prop: ses of R_n modules}, implies $  \psi(p_i-\alpha_{p_i})=1$. Hence, the map $ \psi  $ is an isomorphism, and applying the five-lemma, we conclude that the map  $ \Phi $ is a ring isomorphism. 
		\end{proof}
	\end{mysubsection}
	\section{The positive cone of $ \pi^G_\bigstar(H\uZ) $}\label{secpositiveHZ}
	We now  describe the positive cone of $ \pi^G_\bigstar(H\uZ) $, denoted by  $ \bigstar_\textup{div} $, which consists of  linear combinations of the form  $ \ell -\epsilon\sigma -(\sum_{d_i\mid n}~~b_i\mspace{2mu} \lambda^{d_i}) $  where $  \ell  \in \Z, b_i\ge 0$ and $ \epsilon\in \{0,1\} $. Let us begin with the part which does not contain the representation $ \sigma $,  that is  $ \pi_{\bigstar^e}^G(H\uZ) $.\par
	Let $ \hat{\RR}$ be the graded ring generated by the classes $ a_{\lambda^d}, u_{\lambda^d} $, where $ d $ is  a  divisor of $ n $, $d \ne n $ with relations
	\begin{myeq}\label{eq:a rel Z}
		\frac{n}{d}a_{\lambda^d} =0 \quad \textup{~and~} \quad 
		\frac{d}{\gcd(d,s)}a_{\lambda^s} u_{\lambda^d}=\frac{s}{\gcd(d,s)} a_{\lambda^d}u_{\lambda^s}.
	\end{myeq}
	That is, 
	\[
	\hat{\RR}=\Z[a_{\lambda^d},u_{\lambda^d}\mid  d \textup{~divides~} n, d\ne n]/(\frac{n}{d}a_{\lambda^d}, \frac{d}{\gcd(d,s)}a_{\lambda^s} u_{\lambda^d}-\frac{s}{\gcd(d,s)} a_{\lambda^d}u_{\lambda^s}).
	\]
	For $ V= \sum_{i=1}^sm_i\lambda^{d_i}$ with $ m_i\ge0 $  and $ \hm=2r \le \dim V$, define
	\begin{myeq}\label{ivnot}
		\begin{aligned}
			&
			I_{\hm-V}:=\{\textup{the set of all~}  L:=(l_1,\cdots,l_s )  \textup{~such that~}  0\le l_i\le m_i   \textup{~and ~}  \sum_{i=1}^s l_i=r\},\\
			&\textup{for~} L\in I_{\hm-V},\quad \textup{~let~}  J_{\hm-V}(L):=    \{i  \mid  m_i-l_i>0\}.
		\end{aligned}
	\end{myeq}
	A direct calculation gives us the following.
	\begin{lemma}\label{lcmgcd}
		With the notation \eqref{ivnot}, the group  $ \hat{\RR}_{\hm-V} $ is isomorphic to $ \Z $ if $ \hm=\dim V $, and if $ \hm<\dim V $ then it is cyclic  of order
		\[
		\lcm\big[k_{L}\mid L\in I_{\textup{\tiny $ \hm-V $}}\big] \quad \textup{~where~}\quad k_L:=\gcd\big[\frac{n}{d_i}\mid i\in J_{\textup{\tiny $ \hm-V $}}({L})\big].
		\]
	\end{lemma}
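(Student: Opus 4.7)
The plan is to work from the monomial presentation of $\hat{\RR}_{\hm-V}$. Every class in $\hat{\RR}$ is a polynomial in the generators $\{a_{\lambda^{d_i}}, u_{\lambda^{d_i}}\}$; tracking the $RO(G)$-grading (with $\deg a_{\lambda^{d_i}} = -\lambda^{d_i}$ and $\deg u_{\lambda^{d_i}} = 2 - \lambda^{d_i}$) forces every monomial of degree $\hm - V$ to take the form
\[
x_L := \prod_{i=1}^s u_{\lambda^{d_i}}^{l_i}\, a_{\lambda^{d_i}}^{m_i - l_i}
\]
for some $L = (l_1,\ldots,l_s) \in I_{\hm-V}$. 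Hence $\{x_L\}_{L \in I_{\hm-V}}$ spans $\hat{\RR}_{\hm-V}$ as an abelian group. When $\hm = \dim V$, the constraint $\sum l_i = r = \sum m_i$ forces $L = (m_1,\ldots,m_s)$ as the unique option; the resulting monomial $\prod u_{\lambda^{d_i}}^{m_i}$ contains no $a$-factor, and since both defining relations in \eqref{eq:a rel Z} carry at least one $a$, they do not touch this monomial. It therefore generates a free summand, giving $\hat{\RR}_{\hm - V} \cong \Z$.

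For $\hm < \dim V$, every $L \in I_{\hm-V}$ has $J(L) \neq \varnothing$, and the $a$-relation $(n/d_i)a_{\lambda^{d_i}} = 0$ yields $(n/d_i)\,x_L = 0$ for each $i \in J(L)$. Consequently the order of $x_L$ divides $k_L$, and $N := \lcm\{k_L\}$ annihilates $\hat{\RR}_{\hm-V}$. Applying the $au$-relation to an adjacent pair $L, L' \in I_{\hm-V}$ (differing by moving one $u$-factor from index $i$ to index $j$) and cancelling common factors gives the integer identity
\[
\frac{d_i}{\gcd(d_i,d_j)}\,x_L \;=\; \frac{d_j}{\gcd(d_i,d_j)}\,x_{L'}.
\]
The key step is to upgrade this web of identities to cyclicity of $\hat{\RR}_{\hm-V}$. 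I plan to argue one prime at a time: fix $p \mid n$, and observe that at each such edge, exactly one of $d_i/\gcd$ or $d_j/\gcd$ is a $p$-local unit, so after $p$-localisation the identity reads ``one endpoint is a $p$-local integer multiple of the other''. Iterating along the graph on $I_{\hm-V}$, every $x_L$ reduces $p$-locally to an integer multiple of the terminal monomial $x_{L^{(p)}}$ obtained by greedily assigning the $r$ available $u$-factors to the indices of largest $v_p(d_\bullet)$. By construction $L^{(p)}$ maximises $v_p(k_L)$, so $v_p(k_{L^{(p)}}) = v_p(N)$ and $x_{L^{(p)}}$ has $p$-local order exactly $p^{v_p(N)}$. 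Assembling these $p$-local cyclic generators across primes via the Chinese remainder theorem produces a single global cyclic generator of order $N$, yielding $\hat{\RR}_{\hm-V} \cong \Z/N$.

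The main obstacle is the combinatorial bookkeeping in the cyclicity step: one must verify that the greedy $L^{(p)}$ is well-defined, that every $p$-local chain of $au$-identities terminates at it, and that no $p$-local relations survive beyond $(n/d_i)\,x_{L^{(p)}} = 0$ for $i \in J(L^{(p)})$. Once this is in place both the upper and lower bounds on the exponent of $\hat{\RR}_{\hm-V}$ follow immediately, and checking against the small cases already computed in \S\ref{mostlyzerocase} and \S\ref{subsecprimeA} will provide a useful sanity test for the orientation and greedy choices.
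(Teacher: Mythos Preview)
Your approach is correct and matches the paper's: both enumerate the monomials $x_L$ indexed by $I_{\hm-V}$, use the $au$-relation to establish cyclicity, and read off the order from the $a$-relations. The paper's own proof is a two-sentence sketch of exactly this (``the last relation of \eqref{eq:a rel Z} implies that this is cyclic of the required order''); your $p$-local reduction and greedy choice of $L^{(p)}$ simply make explicit what the paper calls a ``direct calculation'', and the lower-bound check you flag as the main obstacle is likewise left implicit there.
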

	\begin{proof}
		If $ \hm=\dim V $, the only monomials in the $ a_{\lambda^d} $, $ u_{\lambda^d} $ in degree $ \hm-V $ is the product $ \prod_{i=1}^{s} u_{\lambda^{d_i}}^{m_i}$. This implies $ \hat{\RR}_{\hm-V}\cong \Z $. If $ \hm<\dim V $, the monomials are indexed by $ I_{\hm-V} $ via $ L\in I_{\hm-V} $ corresponds to $ \prod_{i\in J_{\hm-V}(L)}a_{\lambda^{d_i}}^{m_i-l_i}\prod_{i=1}^su_{\lambda^{d_i}}^{l_i} $. The last relation of \eqref{eq:a rel Z} implies that this is cyclic of the required order.
	\end{proof}
	We elaborate the above calculation in the following example. 
	\begin{exam}
		Let $ G=C_{p^2q^2} $ and $ V=\lambda^p+\lambda^q+\lambda^{p^2}$ and $ \hm=2 $. (In terms of the notation \eqref{ivnot}, $ d_1=p, d_2=q, d_3=p^2 $. So 
		$$
		I_{\hm-V}=\{L_1=(1,0,0), L_2=(0,1,0), L_3=(0,0,1)\},
		$$
		\[
		J_{\hm-V}(L_1)=\{2,3\}, \quad J_{\hm-V}(L_2)=\{1,3\},  \quad J_{\hm-V}(L_3)=\{1,2\}.  
		\]
		The corresponding monomials in $ \hat{\RR}_{\hm-V} $ are $ a_{\lambda^p}a_{\lambda^q}u_{\lambda^{p^2}} $, $u_{\lambda^{p}} a_{\lambda^q}a_{\lambda^{p^2}}$, and  $a_{\lambda^p}u_{\lambda^{q}}a_{\lambda^{p^2}} $ of order $ pq, q, q^2 $, respectively.  The relations  \eqref{eq:a rel Z} say that  
		\[
		pa_{\lambda^p}a_{\lambda^q}u_{\lambda^{p^2}}=u_{\lambda^{p}} a_{\lambda^q}a_{\lambda^{p^2}}  \quad \textup{~and~} \quad p^2a_{\lambda^p}a_{\lambda^q}u_{\lambda^{p^2}}= qa_{\lambda^p}u_{\lambda^{q}}a_{\lambda^{p^2}}.
		\]
		This means that $ \hat{\RR}_{\hm-V} $ is cyclic of order   $ pq^2 $.
	\end{exam}
	The argument of Theorem \ref{thm A cpm} for $ \pi_{\bigstar^e}^G(H\uA) $ follows verbatim to prove the next result.
	\begin{thm}\label{thm Z cpm}
		Let $ G=C_{p^m} $. The ring $ \pi_{\bigstar^e}^G(H\uZ) $ is isomorphic to $ \hat{\RR} $.
	\end{thm}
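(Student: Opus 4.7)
The plan is to transcribe the proof of Theorem~\ref{thm A cpm} with $H\uA$ replaced by $H\uZ$ throughout, exploiting two simplifications specific to constant coefficients. First, since $\underline{\pi}_{2-\lambda_i}^G(H\uZ)(G/G) \cong \Z$ is cyclic over $A(G)$, each set of generators $u_{\lambda_i}^{(C_{p^i})}$ collapses to a single class $u_{\lambda_i}$, which eliminates the $u$-relations of \eqref{eq: u-relation for A-coeff} from the presentation. Second, the Burnside ring transfer-ideal relation \eqref{eq: a-relation in A-coeff} specializes to the numerical relation $\tfrac{n}{d}a_{\lambda^d}=0$, because $\tr^G_{C_d}:\Z\to\Z$ is multiplication by $[G:C_d]$. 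The $au$-relation \eqref{eq:au rel general} reduces correspondingly to the divisibility identity in \eqref{eq:a rel Z}. Together, these three specializations recover exactly the defining relations of $\hat{\RR}$.

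The induction runs on the number of summands $\lambda^{p^i}$ appearing in $V=\sum b_i\lambda_i\in \bigstar^e$. Fix $\lambda_r \in V$ of smallest stabilizer, set $W=V-\lambda_r$, and note that the cellular decomposition \eqref{celldecom} makes $\pi_\ast^G(S^V\wedge H\uZ)$ agree with $\pi_\ast^G(S^W\wedge H\uZ)$ for $\ast<|W|$. So only degrees $\ast = |W|, |W|+1, |W|+2$ require analysis. In degree $|V|$, restriction to $C_{p^r}$ is an isomorphism onto $\pi_{|V|}^{C_{p^r}}(S^{|V|}\wedge H\uZ)\cong \Z$, and $\prod u_{\lambda_i}^{b_i}$ restricts to $1$, verifying that it is a generator. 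In degrees $|W|+1$ and $|W|$ we use the long exact sequence for $S(\lambda_r)_+\to S^0\to S^{\lambda_r}$: the analogue of the connecting map $\psi_{|W|}$ is induced by $\res^G_{C_{p^r}}\tr^G_e$, which for $\uZ$-coefficients is multiplication by $p^{m-r}$, hence injective, killing $\pi_{|W|+1}^G(S^V\wedge H\uZ)$ and making $a_{\lambda_r}$-multiplication surjective onto $\pi_{|W|}^G(S^V\wedge H\uZ)$. The image of $\psi_{|W|}$ is generated by the $a$-relation $\tfrac{n}{p^r}a_{\lambda_r}=0$.

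The hard part will be confirming that no new relations appear beyond those in $\hat{\RR}$, i.e.\ that the presentation is complete. For each monomial $\prod_{i\neq j} u_{\lambda_i}^{c_i} a_{\lambda_j}$ in degree $|W|$, the $au$-relation \eqref{eq:a rel Z} lets us rewrite it as a scalar multiple of $a_{\lambda_r}\cdot\prod u_{\lambda_i}$, and then the $a$-relation pins down the order of the coset in $\pi_{|W|}^G(S^V\wedge H\uZ)/\mathrm{Im}(\psi_{|W|})$. The resulting order has to match the cyclic order $\lcm[k_L\mid L\in I_{|W|-V}]$ computed in Lemma~\ref{lcmgcd}; this is a bookkeeping check that compares two $\lcm$/$\gcd$ expressions over the indexing sets $I_{|W|-V}$ and $J_{|W|-V}(L)$, and it closes the induction once verified.
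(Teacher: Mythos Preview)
Your proposal is correct and follows the same approach as the paper, which explicitly states that the argument of Theorem~\ref{thm A cpm} carries over verbatim to the $\uZ$-coefficient case (and is in fact easier, as you note). One small correction: the map $\psi_{|W|}$ is $\res^G_{C_{p^l}}\tr^G_{C_{p^r}}$ (with $\lambda_l\in W$ of smallest stabilizer), not $\res^G_{C_{p^r}}\tr^G_e$ as you wrote, though your conclusion that it becomes multiplication by $p^{m-r}$ on $\uZ$-coefficients is correct.
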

	It is worthwhile to note that the proof of  Theorem \ref{thm Z cpm} is much easier than that of Theorem \ref{thm A cpm}. We demonstrate the inductive step in the example below.
	\begin{exam}
		Let  $\hm-V\in \bigstar^e $ for the group $ G=C_{p^m}$. 
		Write $ V=W+\lambda^{p^r} $ where $ \lambda^{p^r}\in V $ has smallest stabilizer. As the odd-dimensional groups are zero, we have  the long exact sequence 
		\begin{equation*}
			\cdots\to \pi_{\hm-W}^G(S(\lambda^{p^r})_+\smas H\uZ)\xrightarrow{\varphi} \pi_{\hm-W}^G(H\uZ)\xrightarrow{} \pi_{\hm-V}^G(H\uZ)\xrightarrow{} \pi_{\hm-W-1}^G(S(\lambda^{p^r})_+\smas H\uZ)\xrightarrow{}\cdots 
		\end{equation*}
		The first and last term may be identified to $ \pi_{\hm-|W|}^{C_{p^r}}(H\uZ) $ and $ \pi_{\hm-|W|-2}^{C_{p^r}}(H\uZ) $ because $ \res_{C_{p^r}}V=\dim (V) $ by our choice of $ \lambda^{p^r} $.
		If $ \hm\ne |W|, |W|+2 $,  the middle map, $ a_{\lambda^{p^r}} $ is an isomorphism.\par
		If $ \hm=|W| $, then the  last term is zero. The map $ \varphi $  is  multiplication by $p^{m-r} $.
		The inductive hypothesis implies $ \pi_{\hm-W}(H\uZ)\cong \Z $.  As a result $ \pi_{\hm-V}(H\uZ)\cong \Z/p^{m-r} $. This is consistent with  Lemma \ref{lcmgcd}. If $ \hm=|W|+2 $, then the second term is zero, and the last term is $ \Z $. This completes the proof.
	\end{exam}
	We may generalize this result further to the group $ G=C_n $.
	\begin{thm}\label{thm ring z ev}
		The sub-ring  $ \pi_{\bigstar^e}^G(H\uZ) $ of $ \pi_\bigstar^G(H\uZ) $ is isomorphic to $ \hat{\RR} $.
	\end{thm}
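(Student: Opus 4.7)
The plan is to construct a ring homomorphism $\Psi : \hat\RR \to \pi^{G}_{\bigstar^e}(H\uZ)$ sending the generator $a_{\lambda^d}$ (respectively $u_{\lambda^d}$) of $\hat\RR$ to the Euler class (respectively the orientation class) defined in \S\ref{lifting}, composed with the unit map $H\uA \to H\uZ$. First I would verify that $\Psi$ is well-defined by checking the two defining relations of $\hat\RR$. The relation $\frac{n}{d} a_{\lambda^d} = 0$ is the image under $\eta_{\uZ}: \uA \to \uZ$ of the $a$-relation \eqref{eq:au general}: the class $[G/C_d] \in A(G)$ maps to $n/d \in \Z$, and the containment $[G/C_d] \in I_{\tr^G_{C_d}}$ immediately yields $\frac{n}{d} a_{\lambda^d} = 0$. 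The $au$-relation follows from Theorem \ref{prop:au-rel} applied with $\xi_1 = \lambda^s$ and $\xi_2 = \lambda^d$, whose kernels $C_s$ and $C_d$ intersect in $C_{\gcd(s,d)}$; under $\eta_{\uZ}$, the Burnside classes $[C_s/C_{\gcd(s,d)}]$ and $[C_d/C_{\gcd(s,d)}]$ become the integers $s/\gcd(s,d)$ and $d/\gcd(s,d)$.

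Next I would show that $\Psi$ is an isomorphism in each graded piece $\hat\RR_{\hm - V} \to \pi^{G}_{\hm - V}(H\uZ)$, where $V = \sum_{i=1}^{s} b_i \lambda^{d_i}$ is a genuine representation with $b_i \ge 0$. The argument proceeds by induction on $\dim V$, following the template of Theorem \ref{thm A cpm}. For the inductive step, write $V = W + \lambda^{d_r}$ and apply the long exact sequence obtained by smashing the Euler cofibre sequence $S(\lambda^{d_r})_+ \to S^0 \xrightarrow{a_{\lambda^{d_r}}} S^{\lambda^{d_r}}$ with $\Sigma^{\hm - W} H\uZ$:
\begin{equation*}
\cdots \to \pi^{G}_{\hm - W}(S(\lambda^{d_r})_+ \wedge H\uZ) \xrightarrow{\varphi} \pi^{G}_{\hm - W}(H\uZ) \xrightarrow{a_{\lambda^{d_r}}} \pi^{G}_{\hm - V}(H\uZ) \to \pi^{G}_{\hm - W - 1}(S(\lambda^{d_r})_+ \wedge H\uZ) \to \cdots
\end{equation*}
The end terms are computed from the cellular structure of $S(\lambda^{d_r})_+$, whose cells are of the form $G/C_{d_r+} \wedge D^i$ ($i=0,1$), so that $\pi^{G}_*(S(\lambda^{d_r})_+ \wedge H\uZ \wedge S^{\hm - W})$ is read off from the two-term complex involving $\pi^{C_{d_r}}_{\hm-|W|}(H\uZ \wedge S^{\res W})$ and the map $1-g$. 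The connecting maps are then identified with multiplication by $n/d_r$ (in the spirit of \eqref{tr form}) and with the appropriate transfer, yielding explicit generators $\prod u_{\lambda^{d_i}}^{b_i}$ at the top and producing lower-degree classes of the form $\prod u_{\lambda^{d_i}}^{l_i} \prod a_{\lambda^{d_i}}^{b_i - l_i}$ with $L = (l_i)$ ranging over $I_{\hm-V}$. Surjectivity of $\Psi$ follows at once; for injectivity the order of each cyclic summand is compared to the formula in Lemma \ref{lcmgcd}.

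The main obstacle, and the new feature relative to the prime-power case of Theorem \ref{thm Z cpm}, is that the subgroup lattice of $C_n$ is not linearly ordered, so in general no single $\lambda^{d_r} \in V$ has stabilizer contained in all the other stabilizers $C_{d_i}$. Consequently $\res^G_{C_{d_r}} W$ need not be trivial, and the convenient vanishing used in the $C_{p^m}$ argument (which forced $a_{\lambda^{p^r}}$ to be an isomorphism outside of two degrees) is not available. The remedy is to choose $d_r$ minimal in the divisibility poset on $\{d_1,\dots,d_s\}$, and to unravel the end terms of the long exact sequence via the inductive hypothesis applied to the smaller representation $\res^G_{C_{d_r}} W$ viewed over $C_{d_r}$; the $au$-relations \eqref{eq:a rel Z} then encode exactly how classes of the form $a_{\lambda^s} u_{\lambda^{d_r}}$ coming from the boundary in one degree rewrite in terms of $a_{\lambda^{d_r}} u_{\lambda^s}$ obtained from lower-dimensional strata. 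Matching the resulting cyclic orders against $\lcm[\gcd[n/d_i : i \in J_{\hm-V}(L)] : L \in I_{\hm - V}]$ from Lemma \ref{lcmgcd} closes the induction and identifies $\Psi$ as an isomorphism of graded rings.
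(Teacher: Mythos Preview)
Your outline sets up the map $\Psi$ correctly and the cofibre-sequence induction is the right engine, but two of the paper's load-bearing ideas are absent from your plan, and without them the ``matching cyclic orders'' step is not closed.

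First, you never argue that $\pi^G_{\hm-V}(H\uZ)=0$ for $\hm$ odd. In the prime-power case this came from the explicit cellular chain complex, but for general $n$ that argument is not available. The paper handles this by a separate induction on $|G|$: if $p\neq q$ both divide $n$, then for any $x$ in an odd-degree group the inductive hypothesis gives $\res^G_{C_{n/p}}(x)=0$ and $\res^G_{C_{n/q}}(x)=0$, whence $px=\tr\res(x)=0=qx$ and so $x=0$. Without odd vanishing your long exact sequence does not break into the short pieces you need, and the claim ``surjectivity of $\Psi$ follows at once'' is not justified: the term $\pi^G_{\hm-W-1}(S(\lambda^{d_r})_+\wedge H\uZ)$ need not feed only into an even-degree group.

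Second, your injectivity argument (``compare the order of each cyclic summand to Lemma~\ref{lcmgcd}'') is where the real difficulty lies, and the paper does \emph{not} attempt a direct bookkeeping of the LES for this. Instead it observes that for each prime $p\mid n$, the $p$-torsion of $\pi^G_{\hm-V}(H\uZ)$ injects into $\pi^H_{\hm-V}(H\uZ)$ for the Sylow $p$-subgroup $H=C_{p^k}$, because $\tr^G_H\res^G_H=[G:H]$ is prime to $p$. This reduces the order computation to the prime-power case already handled in Theorem~\ref{thm Z cpm}. Your proposed route---choose $d_r$ minimal in the divisibility poset and track how the $au$-relations rewrite boundary classes---may be workable, but it is a genuinely harder combinatorial problem than the Sylow reduction, and your sketch does not indicate how the $\lcm/\gcd$ formula actually falls out of it.
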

	\begin{proof}
		We observe that the inclusion map $\iota: \hat{\RR}\to  \pi_{\bigstar^e}^G(H\uZ)\subseteq \pi_{\bigstar}^G(H\uZ)$ is an isomorphism.
		Assume by induction that   the statement holds  for any subgroup of $ G $. Fix a representation $ V=\sum_{i=1}^s m_i\lambda^i\in \bigstar^e $ with $ \lambda^i\ne 1 $. Then the cellular decomposition of $ S^V $ yields $ \pi_*^G(S^V\smas H\uZ) $ is non-zero for $ 0\le *\le \dim V $. We first show $ \pi_{*-V}^G(H\uZ) =0$ if $ * $ is odd.
		Theorem \ref{thm Z cpm}  implies that  the odd-dimensional groups are zero for $ G=C_{p^m} $. Now suppose   $ |G| $ is divisible by two distinct primes $ p $ and $ q $. 
		Let $  x\in \pi_{*-V}^G(H\uZ)$. By the induction hypothesis   the groups $ \pi_{*-V}^{C_{n/p}}(H\uZ) $, $ \pi_{*-V}^{C_{n/q}}(H\uZ) $ are zero.  So $ \tr^G_{C_p}\res^G_{C_p}(x)=px=0 $ and $ \tr^G_{C_q}\res^G_{C_q}(x)=qx=0 $. Hence $ x=0 $.\par
		Next we show $ \iota $ is surjective. This is proved by induction on the number of representations. That is, we assume the result for $ W=V-\lambda^k $, and prove it for $ V $. Consider the following   long exact sequence  for $ \hm $ even
		\begin{equation*}
			0\to \pi_{\hm-W}(S(\lambda^k)_+\smas H\uZ)\to \pi_{\hm-W}(H\uZ)\xrightarrow{a_{\lambda^k}} \pi_{\hm-V}(H\uZ)\xrightarrow{\partial} \pi_{\hm-W-1}(S(\lambda^k)_+\smas H\uZ)\to 0.
		\end{equation*}
		We may identify $ \pi_{\hm-W-1}^G(S(\lambda^k)_+\smas H\uZ)$ with  $\pi_{\hm-W-2}^{C_k}( H\uZ) $.
		Under this identification, the map $  \pi^G_{\hm-V}(H\uZ)\to \pi_{\hm-W-2}^{C_k}( H\uZ) $ is  $ \res^G_{C_k} $.
		By induction, we know that $ \pi_{\hm-W-2}^{C_k}( H\uZ) $ is generated by the classes $ a_{\lambda^j}, u_{\lambda^j} $ where $ \lambda^j$ are sub-representations of $\res_{C_k} (W)$. We may write them  as $ \res^G_{C_k}(a_{\lambda^j}),  \res^G_{C_k}(u_{\lambda^j})$ where $ \lambda^j\in W $. Further, for an element  $ \psi \in  \pi^G_{\hm-W-2}( H\uZ) $, the $ \res^G_{C_k}(\psi \cdot u_{\lambda^k})= \res^G_{C_k}(\psi)$. 
		Moreover, for an element $ x \in \pi^G_{\hm-W}(H\uZ) $, $ x\cdot a_{\lambda^k} $ represents the image of $ \pi^G_{\hm-W}(H\uZ) $ inside $ \pi^G_{\hm-V}(H\uZ) $. Hence the classes $ a_{\lambda^i}, u_{\lambda^i} $, $ \lambda^i\in V $ generate $  \pi^G_{*}(S^V\smas H\uZ)$. Therefore, the map $ \iota $ is surjective.\par
		Finally we show that  order of $ \pi_{\hm-V}^G(H\uZ) $ is  same as $ \mathcal{R}_{\hm-V} $. It is enough to establish this for groups of prime power order. This is because  if $H=C_{{p}^{k}}$ and $\gcd(p,[G:H])=1 $, then $ \tr^G_H\res^G_H=\res^G_H\tr^G_H=[G:H] $, which is co-prime to $ p $. Hence to determine the $ p $-torsion in $ \pi_{\hm-V}^G(H\uZ)  $, it is enough to look for $ p $-torsion in $ \pi_{\hm-V}^H(H\uZ)  $. For groups of prime power order, this follows from Theorem \ref{thm Z cpm}.
	\end{proof}
	Next we determine the ring  $ \pi_{\bigstar_{\textup{div}}}^G(H\uZ) $. We may assume $ G =C_n$, $ n $ is even, else we are in the situation of Theorem \ref{thm ring z ev}.  Let $ \mathcal{R}$ denote the graded ring generated by the classes $ a_{\sigma}, a_{\lambda^d}, u_{\lambda^d} $, where $ d $ is  a  divisor of $ n $, $d \ne n $ with relations
	\begin{myeq}\label{eq:a rel z div}
		2a_\sigma=0,~~~  a_\sigma^2=a_{\lambda^{n/2}},~~~	\frac{n}{d}a_{\lambda^d} =0 \quad \textup{~and~} \quad 
		\frac{d}{\gcd(d,s)}a_{\lambda^s} u_{\lambda^d}=\frac{s}{\gcd(d,s)} a_{\lambda^d}u_{\lambda^s}.
	\end{myeq}
	That is, 
	\[
	\RR=\Z[a_{\sigma}, a_{\lambda^d},u_{\lambda^d}]/(2a_\sigma,~~~  a_\sigma^2-a_{\lambda^{n/2}},\frac{n}{d}a_{\lambda^d}, \frac{d}{\gcd(d,s)}a_{\lambda^s} u_{\lambda^d}-\frac{s}{\gcd(d,s)} a_{\lambda^d}u_{\lambda^s}).
	\]
	We extend Theorem \ref{thm ring z ev} to the general case in the theorem below.
	\begin{thm}\label{hzthm}
		The ring  $ \pi_{\bigstar_{\textup{div}}}^G(H\uZ) $  is isomorphic to $ \RR$.
	\end{thm}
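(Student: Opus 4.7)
The plan is to extend Theorem~\ref{thm ring z ev} (which identifies $\pi^G_{\bigstar^e}(H\uZ)$ with $\hat{\RR}$) by incorporating the Euler class $a_\sigma$. First I would define a graded ring homomorphism $\Phi : \RR \to \pi^G_{\bigstar_\div}(H\uZ)$ sending generators to their namesakes, and check that the two new relations hold in the target. The relation $2a_\sigma = 0$ is deduced from the Gysin cofibre sequence $S(\sigma)_+ = (G/C_{n/2})_+ \to S^0 \to S^\sigma$: smashing with $H\uZ$ and taking $\pi^G_0$ identifies the connecting map with the transfer $\tr^G_{C_{n/2}} : \Z \to \Z$, namely multiplication by $2$, so $\pi^G_{-\sigma}(H\uZ) \cong \Z/2$ is generated by $a_\sigma$. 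The relation $a_\sigma^2 = a_{\lambda^{n/2}}$ follows from the isomorphism of real $G$-representations $2\sigma \cong \lambda^{n/2}$ together with the multiplicative formula $a_V a_W = a_{V \oplus W}$.

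For surjectivity, any $\alpha \in \bigstar_\div$ may be written as $\hm - W - \epsilon \sigma$ with $W \in \bigstar^e$ and $\epsilon \in \{0,1\}$. The case $\epsilon = 0$ is covered by Theorem~\ref{thm ring z ev}. For $\epsilon = 1$, I would smash the cofibre sequence $S(\sigma)_+ \to S^0 \xrightarrow{a_\sigma} S^\sigma$ with $S^{W} \wedge H\uZ$, and use the identification $\pi^G_*((G/C_{n/2})_+ \wedge Y) \cong \pi^{C_{n/2}}_*(Y)$ to obtain the exact sequence
\[
\pi^{C_{n/2}}_{\hm - W}(H\uZ) \to \pi^G_{\hm - W}(H\uZ) \xrightarrow{\,a_\sigma\,} \pi^G_{\hm - W - \sigma}(H\uZ) \to \pi^{C_{n/2}}_{\hm - W - 1}(H\uZ).
\]
The rightmost term vanishes by the odd-degree vanishing established inside the proof of Theorem~\ref{thm ring z ev}, applied inductively to the subgroup $C_{n/2}$; hence $a_\sigma$ is surjective and every class in $\pi^G_{\hm - W - \sigma}(H\uZ)$ has the form $a_\sigma \cdot \xi$ with $\xi$ a polynomial in the Euler and orientation classes of $\hat{\RR}$.

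For injectivity, I would compare the orders of $\RR_{\hm - V}$ and $\pi^G_{\hm - V}(H\uZ)$ grading by grading. The relation $a_\sigma^2 = a_{\lambda^{n/2}}$ confines the $\sigma$-degree of any monomial to at most one, while the Gysin sequence above combined with $2a_\sigma = 0$ shows $\pi^G_{\hm - W - \sigma}(H\uZ)$ is a $2$-torsion quotient of $\pi^G_{\hm - W}(H\uZ)$. The transfer-restriction identity $\tr^G_H \res^G_H = [G:H]$ localizes $p$-torsion to the Sylow $p$-subgroup, reducing to the prime-power case, where the cellular decomposition used in Theorem~\ref{thm Z cpm} extends directly with a single extra $a_\sigma$-factor (as in Theorem~\ref{thm A cpm div}) to yield the order predicted by Lemma~\ref{lcmgcd} applied to $\hat{\RR}_{\hm - W}$.

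The main obstacle I anticipate is verifying that the $a_\sigma$-multiplication map has kernel precisely the image of the transfer $\pi^{C_{n/2}}_{\hm - W}(H\uZ) \to \pi^G_{\hm - W}(H\uZ)$, and that this transfer image corresponds exactly to the $a_\sigma$-annihilator inside $\RR$. In particular, when $W$ contains factors $\lambda^d$ with $d \mid n/2$, the au-relations from Theorem~\ref{prop:au-rel} interact with $a_\sigma$ through $a_\sigma^2 = a_{\lambda^{n/2}}$, and one must confirm that no new relations arise beyond those already listed in the definition of $\RR$. This is the step where careful bookkeeping with the explicit generators is essential.
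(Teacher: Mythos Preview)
Your approach is essentially the paper's: both build $\RR \to \pi^G_{\bigstar_\div}(H\uZ)$, invoke Theorem~\ref{thm ring z ev} for $\bigstar^e$, and use the cofibre sequence ${C_{n/2}}_+ \to S^0 \xrightarrow{a_\sigma} S^\sigma$ to handle the extra $\sigma$. The surjectivity argument (vanishing of $\pi^{C_{n/2}}_{\hm-W-1}(H\uZ)$ for $\hm$ even) is identical.

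Where you diverge is in the injectivity/kernel step, and here the paper is cleaner than what you sketch. You propose to localize at $2$ and reduce to the Sylow subgroup $C_{2^m}$, then redo a cellular computation as in Theorem~\ref{thm A cpm div}. The paper instead observes directly, for general even $n$, that $\res^G_{C_{n/2}}:\pi^G_{\hm-W}(H\uZ)\to\pi^{C_{n/2}}_{\hm-W}(H\uZ)$ is \emph{surjective} (every monomial in the $a$'s and $u$'s downstairs lifts), so by $\tr\circ\res=2$ the image of $\tr^G_{C_{n/2}}$ is exactly $2\cdot\pi^G_{\hm-W}(H\uZ)$. This immediately gives $\pi^G_{\hm-W-\sigma}(H\uZ)\cong \pi^G_{\hm-W}(H\uZ)/2\cong \hat\RR_{\hm-W}/2 = \RR_{\hm-W-\sigma}$ for $\hm$ even, and for $\hm$ odd the same transfer identification plus the order formula of Lemma~\ref{lcmgcd} (order of $\pi^G$ is twice that of $\pi^{C_{n/2}}$) shows $\tr$ is injective, forcing $\pi^G_{\hm-W-\sigma}(H\uZ)=0$. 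Your sketch does not separately treat the odd $\hm$ case, and it is precisely this transfer-injectivity argument that closes it.

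Your anticipated obstacle---interaction of the $au$-relations with $a_\sigma$ via $a_\sigma^2=a_{\lambda^{n/2}}$---is not a genuine issue. Since any grading in $\bigstar_\div\setminus\bigstar^e$ carries $\sigma$-degree exactly one, the relation $a_\sigma^2=a_{\lambda^{n/2}}$ lives entirely in $\bigstar^e$ and imposes nothing new on the $a_\sigma$-linear part; algebraically $\RR_{\hm-W-\sigma}=\hat\RR_{\hm-W}/2$ with no further relations, and the whole difficulty collapses to identifying $\ker(a_\sigma)$ with $\mathrm{im}(\tr)$, which the surjectivity of $\res$ handles in one line.
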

	\begin{proof}
		We show that the  map $\iota: {\RR}\to  \pi_{\bigstar_\div}^G(H\uZ)$ is an isomorphism. Theorem \ref{thm ring z ev} already implies that $ \iota $ is  an isomorphism in gradings belonging to $ \bigstar^e $. It suffices to prove the theorem in grading $ V+\sigma $ for $ V\in \bigstar^e $.  Let $ H $ be the stabilizer of the representation $ \sigma $, namely the subgroup of  index $ 2 $.  Consider the long exact sequence 
		\begin{equation*}
			\cdots\to \pi_{\hm-V}^G(H\uZ)\xrightarrow{a_\sigma} \pi_{\hm-V-\sigma}^G(H\uZ)\xrightarrow{} \pi_{\hm-V-1}^H(H\uZ) \xrightarrow{\tr^G_H} \pi_{\hm-V-1}^G(H\uZ)\to \cdots
		\end{equation*}
		Let $ \hm $ be even. Then  $ \pi_{\hm-V-1}^H(H\uZ)=0 $ by Theorem \ref{thm ring z ev}. Hence the map $ a_\sigma $ is surjective.\par
		We now observe that  $ \pi_{\hm-V-\sigma}^G(H\uZ)=0 $ for $ \hm $ odd. This follows from the description of the order given in \eqref{ivnot} and knowing the transfer map $ \tr^G_H $. The order of $ \pi_{\hm-V-1}^G(H\uZ) $ is twice the order of $ \pi_{\hm-V-1}^H(H\uZ) $.
		As the $ \res^G_H:  \pi_{\hm-V-1}^G(H\uZ)\to \pi^H_{\hm-V-1}(H\uZ)$ is surjective,  an element of  $  \pi_{\hm-V-1}^H(H\uZ)$ which is  the form  $ u_{\lambda^{i_1}}\cdots u_{\lambda^{i_s}} a_{\lambda^{i_{s+1}}}\cdots  a_{\lambda^{i_{t}}} $, can be rewritten using the restriction as $ \res^G_H(u_{\lambda^{{i_1}'}}\cdots u_{\lambda^{{i_s}'}} a_{\lambda^{{i_{s+1}}'}}\cdots  a_{\lambda^{{i_t}'}}) $, where $\res_H (\lambda^{{i_{j}}'})= \lambda^{i_j}$. Since $ \tr^G_H\res^G_H $ is multiplication by $ 2 $, we get $ \tr^G_H(u_{\lambda^{i_1}}\cdots u_{\lambda^{i_s}} a_{\lambda^{j_{1}}}\cdots  a_{\lambda^{j_{t}}})=2 u_{\lambda^{{i_1}'}}\cdots u_{\lambda^{{i_s}'}} a_{\lambda^{{j_1}'}}\cdots  a_{\lambda^{{j_t}'}}$. 
		Hence the last map, $ \tr^G_H $ is injective. This shows 
		$ \pi_{\hm-V-\sigma}^G(H\uZ)=0 $ when $ \hm $ is odd.\par
		Finally, we observe that for $ \hm $ even  the group  $ \pi_{\hm-V-\sigma}^G(H\uZ) $ is cyclic of order $ 2 $ which matches  with the order of $ \RR_{\hm-V-\sigma} $.
		This completes the proof.
	\end{proof}
	\section{The stable homotopy in negative grading}\label{secneg}
	In this section we  determine the ring $\pi^G_{\bigstar_{\le 0}}(H\uZ)$, where $G=C_{p^m}$  and  $\bigstar_{\le 0}$ consists of those $\alpha\in RO(G)$ for which $|\alpha^H|\le 0$ for all $H\le G$.  Here, we  use the fact that $S^0\to H\uA$  is an equivalence in degree $\le0$, so that  $\pi_\alpha^G(S^0) \to \pi_\alpha^G(H\uA)$ is an isomorphism for $\alpha\in \bigstar_{\le0}$ \cite[Theorem 2.1]{Lew92}. Further, if we write
	\[
	C(G):=\text{Functions (conjugacy classes of subgroups of $G$, $\Z$)},
	\]
	the fixed point map 
	$$
	\mathcal{F}^G: \pi_\alpha^G(S^0)\to C(G), \quad  \mathcal{F}^G(\nu)(H)= \deg (\nu^H).
	$$
	is injective for $\alpha\in \bigstar_{\le 0}$ \cite[Proposition 1]{tDP78}.\par
	We first consider the case $G=C_{p^m}$, $p$ odd.	
	Denote the representation $\lambda^{p^i}$ by $\lambda_i$  and the subgroup  $C_{p^i}$ by   $H_i$ where $0\le i\le m$.  For computations with $H\uZ$, we use $\bigstar_{\pm \textup{div},\le 0}$ (written simply $\bigstar_{\le0}$) for linear combination of $\lambda_i$ such that the fixed points are of dimension $\le 0$. Any  $\alpha\in \bigstar_{\le 0}$ can be expressed as $\sum_{i=0}^m{\frac{-|\alpha^{H_i}|}{2}}(\lambda_{i-1}-\lambda_i)$.  The map $S^{\lambda_{i-1}}\to S^{\lambda_i}$ given by $z\mapsto z^p$, defines an element $ a_{\lambda_i/\lambda_{i-1}}\in \pi^G_{\lambda_{i-1}-\lambda_i}(H\uZ)$. Observe that 
	\begin{myeq}\label{fixa}
		\mathcal{F}^G(a_{\lambda_{i}/\lambda_{i-1}})_j=
		\begin{cases}
			p & \text{if $0\le j\le i-1$}\\
			0 & \text{if $ j=i$}\\
			1 & \text{if $ j> i$}.
		\end{cases}
	\end{myeq}
	\begin{defn}\label{defc}
		For $\alpha\in {\bigstar_{\le 0}}$, define   $\chi_\alpha= \prod_{i=0}^{m}(a_{\lambda_i/\lambda_{i-1}})^{-|\alpha^{H_i}|/2}\in \pi^G_\alpha(H\uZ)$. Here we identify the symbol $a_{\lambda_0/\lambda_{-1}}$ with $a_{\lambda_0}$.	
	\end{defn}
	\begin{defn}
		Let $\alpha\in \bigstar_{\le 0}$ be such that $|\alpha|=0$.
		Define  $\chi^G_{\alpha,i}\in \pi^G_\alpha(H\uZ)$  (or simply $\chi_{\alpha,i}$)
		\begin{myeq}\label{chidef}
			\chi_{\alpha,i}:=\tr^G_{H_i}(\chi_{\res_{H_i}(\alpha)}).
		\end{myeq}
	\end{defn}
	Iteratively using  \eqref{fixa}, we obtain
	\begin{myeq}\label{fix1}
		\mathcal{F}^{G}( \chi_{\alpha,{i}})_j=
		\begin{cases}
			p^{m-i}\cdot	p^{-\sum_{k=j}^{m} |\alpha^{H_k}|/2}& \text{if $j\le i $ and $|\alpha^{H_j}|=0$}\\
			0 & \text{otherwise}.
		\end{cases}
	\end{myeq}
	Our goal is to show that all $\chi_{\alpha,i}$ along with $a_{\lambda_0}$ generate $\pi^G_{\bigstar_{\le 0}}(H\uZ)$. Note that  $a_{\lambda_i}=(\prod_{j=1}^{i}a_{\lambda_j/\lambda_{j-1}}) \cdot a_{\lambda_0}$, and $\chi_{\lambda_{j-1}-\lambda_j,m}=a_{\lambda_j/\lambda_{j-1}}$. 
	First we investigate relations involving $\chi_{\alpha,i}$ and $\chi_{\alpha,j}$.
	For fixed $\alpha$ such that $|\alpha|=0$, we have the classes $\chi_{\alpha,0},\cdots, \chi_{\alpha,m}$.\par
	Applying the formula \eqref{fix1} for the group $H_i$, we obtain
	\begin{myeq}\label{rel11}
		\begin{aligned}
			|\alpha^{H_i}|=0 &\implies  \chi_{\alpha,{i-1}}=p\cdot\chi_{\alpha,i},\\
			|\alpha^{H_i}|=-2r< 0 &\implies \chi_{\alpha,{i}}=p^{r-1}\cdot\chi_{\alpha,i-1}.
		\end{aligned}
	\end{myeq}
	\begin{notation}\label{defi}
		We use the following notation.
		\begin{itemize}
			\item Let $I(\alpha)=(i_1,\cdots, i_r) $ where $0\le i_1<\cdots<i_r\le m$ be such that $|\alpha^{H_{i_j}}|=0$ but $|\alpha^{H_{i_j+1}}|\ne 0$.
			\item Let $S(\alpha)=(s_1,\cdots, s_r)$ where $i_{j-1}<s_j\le i_{j}$ be so that $|\alpha^{H_{s_j}}|=0$ but $|\alpha^{H_{s_j-1}}|\ne 0$. Sometimes we write $S(\alpha)=(s_1(\alpha),\cdots, s_r(\alpha))$ to avoid any confusion.
			\item Let 
			$$
			q_{k,l}(\alpha)=\sum_{j=i_l+1}^{i_k}-|\alpha^{H_j}|/2.
			$$
			For clarity,  we sometimes write it as $q_{i_k,i_l}(\alpha)$.
		\end{itemize}
	\end{notation}
	The equation \eqref{rel11} implies that all the $\chi_{\alpha,i}$, $0\le i\le m$ may be expressed as a combination of $\chi_{\alpha,i_1},\cdots, \chi_{\alpha,i_k}$ where $I(\alpha)=(i_1,\cdots,i_k)$.
	Applying \eqref{fix1} for the group $H_{i_k}$, we obtain
	\begin{myeq}\label{rel9}
		p^{i_{k-1}+q_{k,k-1}(\alpha)-s_{k}+1}\chi_{\alpha,{i_{k-1}}}-p^{i_k-s_{k}+1}\chi_{\alpha,{i_k}}=0.
	\end{myeq}
	\begin{exam}
		Let $G=C_{p^3}$ and $\alpha= 2\lambda_0-2\lambda_1$. So fixed points of $\alpha$ are $(0,-4,0,0)$. The  generators $\chi_{\alpha,i}$, $1\le i\le 4$ satisfy the relations $\chi_{\alpha,1}=p\chi_{\alpha,0}=p\chi_{\alpha,2}$ and $\chi_{\alpha,2}=p\chi_{\alpha,3}$ by \eqref{rel11}. Further, by \eqref{rel9}, $p(\chi_{\alpha,0}-p\chi_{\alpha,3})=0 $. We have from \cite[Exmaple 5.7]{BG21b}
		\[
		\pi_\alpha^{C_{p^3}}(H\uZ)=\Z\{\chi_{\alpha,3}\}\oplus \Z/p\{\chi_{\alpha,0}-p\chi_{\alpha,3}\}.
		\]
	\end{exam}
	\medskip
	The multiplicative relations between the $\chi_{\alpha,i}$ are easily obtained.  Let $s=\min (i,s)$. Then 
	\begin{myeq}\label{rel18}
		\chi_{\alpha,i}\cdot \chi_{\beta,s}= \tr^G_{H_i}\big(p^{m-i}\chi_\alpha\cdot  \tr^{H_i}_{H_s}(\chi_\beta)\big)= p^{m-i} \chi_{\alpha+\beta,s}.
	\end{myeq}
	In particular, we get $ \chi_{\alpha,m}\chi_{\beta,s}=\chi_{\alpha+\beta,s}$.
	In $\pi^G_{\bigstar_{\le 0}}(H\uZ)$, we also have the following relations between $a_{\lambda_0}$ and $\chi_{\alpha,{s}}$
	\begin{myeq}
		\begin{aligned}\label{rel3}
			p^{s} a_{\lambda_0}\cdot\chi_{\alpha,{s}}=\tr^G_{H_{s}}(p^{s}a_{\lambda_0}\cdot \chi_\alpha)=0 \quad  (\text{as~} \textup{for the group $G $,~}  p^m a_{\lambda_0}=0  \textup{~by~} \eqref{eq:a rel Z}).
		\end{aligned}
	\end{myeq}
	Let $\rho$ denote the set of relations of the form \eqref{rel9}, \eqref{rel18} and \eqref{rel3}. Define  
	\[
	\Gamma=\Z[a_{\lambda_0}, \chi_{\alpha,s}\mid \alpha\in \bigstar_{\le 0}, |\alpha|=0, 0\le s\le m],
	\]			 			 
	and 
	$$
	\mathcal{R}_{\le 0}=\Gamma/(\rho) .
	$$
	Here we can reduce the generating set of $\Gamma$ by considering only those $\chi_{\alpha,s}$ such that $s\in I(\alpha)$.
	Let $\mathcal{R}^0$ be the sub-ring of $\mathcal{R}_{\le 0}$  generated by the classes $\{\chi_{\alpha,s}\mid \alpha\in \bigstar_{\le 0}, |\alpha|=0, 0\le s\le m\} $. This is the part of $\mathcal{R}_{\le 0}$  in gradings $\alpha\in {\bigstar_{\le 0}}$ with $|\alpha|=0$. So $\mathcal{R}_{<0}= \mathcal{R}^0\{a_{\lambda_0}^k\mid k\ge 1\}$ is the subset of $\mathcal{R}_{\le 0}$ in gradings $\alpha\in {\bigstar_{\le 0}}$ with $|\alpha|<0$.
	\begin{thm}\label{neodd}
		The ring $\pi^G_{\bigstar_{\le 0}}(H\uZ)$ is isomorphic to $\mathcal{R}_{\le 0}$.
	\end{thm}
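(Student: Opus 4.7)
The plan is to construct a ring homomorphism $\Phi : \mathcal{R}_{\le 0} \to \pi^G_{\bigstar_{\le 0}}(H\uZ)$ sending each generator to its eponymous class, and then to verify that $\Phi$ is a bijection grading by grading. The relations \eqref{rel9}, \eqref{rel18}, and \eqref{rel3} cutting out $\mathcal{R}_{\le 0}$ have all been verified in the preceding paragraphs using the fixed-point formula \eqref{fix1} together with the identity $p^m a_{\lambda_0} = 0$ from \eqref{eq:a rel Z}, so $\Phi$ is well-defined. To check bijectivity we split $\bigstar_{\le 0}$ into the sector where $|\alpha| = 0$ and the sector where $|\alpha| < 0$, and treat the former first.

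For $|\alpha| = 0$, I would combine three tools: the isomorphism $\pi_\alpha^G(S^0) \cong \pi_\alpha^G(H\uA)$ from \cite{Lew92}; the surjection $\pi_\alpha^G(H\uA) \twoheadrightarrow \pi_\alpha^G(H\uZ)$ induced by the unit map $\uA \to \uZ$; and the injectivity of $\mathcal{F}^G : \pi_\alpha^G(S^0) \hookrightarrow C(G) = \Z^{m+1}$ from \cite[Proposition 1]{tDP78}. The fixed-point vectors $\mathcal{F}^G(\chi_{\alpha, s})$ for $s \in I(\alpha)$ have the explicit upper-triangular shape recorded in \eqref{fix1}. The plan is to show, using the Burnside-type congruences characterising the image of $\mathcal{F}^G$, that this family integrally spans the image of $\pi_\alpha^G(S^0)$ in $C(G)$, and that the quotient to $\pi_\alpha^G(H\uZ)$ kills precisely the $p$-power combinations captured by \eqref{rel9}. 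A counting comparison of the resulting finite abelian group with $(\mathcal{R}^0)_\alpha$ then forces $\Phi$ to be injective in this grading.

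For $|\alpha| < 0$, I would induct on $-|\alpha|$ using the Gysin cofibre sequence
\[
S(\lambda_0)_+ \wedge H\uZ \to H\uZ \xrightarrow{a_{\lambda_0}} S^{\lambda_0} \wedge H\uZ.
\]
Smashing with $S^{-\alpha - \lambda_0}$ and taking $G$-fixed points produces a long exact sequence whose middle term is $\pi^G_\alpha(H\uZ)$, the adjacent term being $\pi^G_{\alpha + \lambda_0}(H\uZ)$ (handled either by the $|\alpha|=0$ base case or by a lower stage of the induction), and whose boundary terms are controlled via $S(\lambda_0)_+ \simeq (G/e)_+$ and the resulting non-equivariant $H\Z$-homotopy. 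This identifies every class in $\pi^G_\alpha(H\uZ)$ with a combination $\chi_{\beta, s}\cdot a_{\lambda_0}^k$ modulo the torsion relation \eqref{rel3}, matching the decomposition $\mathcal{R}_{<0} = \mathcal{R}^0\{a_{\lambda_0}^k \mid k \ge 1\}$.

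The hard part will be the $|\alpha| = 0$ step, namely ruling out relations among the $\chi_{\alpha, s}$ beyond \eqref{rel9}. The finite combinatorial check is parameterised by the sets $I(\alpha)$ and $S(\alpha)$ of Notation \ref{defi}, and its success hinges on the $p$-adic valuations in \eqref{fix1} being exactly the ones encoded by \eqref{rel9}. A secondary subtlety is the multiplicative relation \eqref{rel18}, which must be checked to be already forced by the transfer--restriction double-coset formulas, so that it contributes no additional identifications once the additive structure has been pinned down.
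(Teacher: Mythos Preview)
Your reduction of the $|\alpha|<0$ sector to $|\alpha|=0$ via the cofibre sequence for $\lambda_0$ is fine, and the paper implicitly uses the same observation (multiplication by $a_{\lambda_0}$ is an isomorphism once $|\alpha|<0$). The genuine gap is in your treatment of the $|\alpha|=0$ sector.

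The injectivity of $\mathcal{F}^G$ from \cite{tDP78} is a statement about $\pi_\alpha^G(S^0)\cong\pi_\alpha^G(H\uA)$, not about $\pi_\alpha^G(H\uZ)$. The surjection $\pi_\alpha^G(H\uA)\to\pi_\alpha^G(H\uZ)$ can have a nontrivial kernel, and in fact $\pi_\alpha^G(H\uZ)$ typically contains torsion already when $|\alpha|=0$: the paper's own example $G=C_{p^3}$, $\alpha=2\lambda_0-2\lambda_1$ gives $\pi_\alpha^G(H\uZ)\cong\Z\oplus\Z/p$. Such torsion is invisible under any degree map into the torsion-free group $C(G)$, so your plan to read off the relations from the image of $\mathcal{F}^G$ cannot succeed. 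Put differently, the relations \eqref{rel9} generally do \emph{not} hold at the level of $\pi_\alpha^G(S^0)$ (compute $\mathcal{F}^G$ of both sides using \eqref{fix1} and observe they differ at the coordinate $j=i_k$), so they are not ``already there'' in the Burnside congruences; they appear only after passing to $H\uZ$. Your sentence ``the quotient to $\pi_\alpha^G(H\uZ)$ kills precisely the $p$-power combinations captured by \eqref{rel9}'' is thus exactly the substance of the theorem, and you have no mechanism to prove it.

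The paper sidesteps this by never invoking $\mathcal{F}^G$ for the actual computation. It runs a double induction, outer on $|G|$ and inner on the index $s$ of the smallest \emph{positive} coefficient in $\alpha=\sum c_i\lambda_i+\ell$, using the cofibre sequence for $\lambda_s$ rather than $\lambda_0$. This produces a comparison diagram whose bottom row is the long exact sequence with third term $\pi_\alpha^{H_s}(H\uZ)$ (known by the outer induction), and whose top row is the corresponding sequence in $\mathcal{R}_{\le 0}$. Three lemmas (labelled (A), (B), (C) in the paper) identify kernel and cokernel of multiplication by $a_{\lambda_s}=a_{\lambda_0}\chi_{\lambda_0-\lambda_s,m}$ on $\mathcal{R}_{\le 0}$ and show the relevant transfer is injective; a five-lemma then closes the induction. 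If you want to salvage your approach, you would need an independent computation of the kernel of $\pi_\alpha^G(H\uA)\to\pi_\alpha^G(H\uZ)$ for every $\alpha$ with $|\alpha|=0$, which is at least as hard as the original problem.
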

	\begin{proof}
		We proceed by induction  firstly on the order of the group $G$, that is, in the induction step we assume that the result holds for all proper subgroups of $G$. 
		Suppose   $\alpha=c_0\lambda_0+\cdots+c_{m-1}\lambda_{m-1}+\ell$ is a $G$-representation in $\bigstar_{\le 0}$. We perform an inner induction on $s$ such that $\lambda_s$ is minimal  positive representation in $\alpha$ (i.e., $c_i\le 0$ for $i\le s-1$ and $c_s>0$) and assuming the map $\mathcal{R}_{\le 0, \alpha-\lambda_s}\to \pi^G_{\alpha-\lambda_s}(H\uZ)$ is an isomorphism, we prove that  the map $\mathcal{R}_{\le 0, \alpha}\to \pi^G_{\alpha}(H\uZ)$ is an isomorphism.\par
		The inner induction begins with the case that  no positive multiples of $\lambda_i$ occur in $\alpha$, that is, $\alpha=c_0\lambda_0+\cdots+c_{m-1}\lambda_{m-1}+\ell$ with  $c_i\le 0$ for all $i$ and $\ell\le 0$. Such an $\alpha$ lies in the intersection of  $\bigstar_{\le 0}$ and  $\bigstar_{\textup{div}}$. By Theorem \ref{hzthm}, we know that the group  $\pi^G_{\alpha}(H\uZ)$  is  generated by $  \langle  a_{\lambda_0}^{-c_0}\cdots a_{\lambda_{m-1}}^{-c_{m-1}} \rangle$ if $\ell=0$, and is $0$ if $\ell<0$. Note we may write  $\mathcal{R}_\alpha=\mathcal{R}_\beta\cdot a_{\lambda_0}^{-\sum_{j=0}^{m-1}c_j}$,  where $\beta=c_0\lambda_0+\cdots+c_{m-1}\lambda_{m-1}-(\sum_{j=0}^{m-1}c_j)\cdot\lambda_0$.
		Note either $\beta=0$ or $I(\beta)=(0,m)$.
		If  $\beta=0$, then  $\chi_{\beta,m}=1$, and $\mathcal{R}_\alpha\cong \Z/{p^m}\{a_{\lambda_0}^{-c_0}\}\cong \pi^G_\alpha(H\uZ)$. In  the other case, $ a_{\lambda_0}\cdot \chi_{\beta,0}=0$ by \eqref{rel3}.  Now write $\beta=\sum_{s=1}^{m-1}(\sum_{i=s}^{m-1}c_i)(\lambda_s-\lambda_{s-1})$ so that $\chi_{\beta,m}= \prod_{s=1}^{m-1}\chi_{\lambda_{s-1}-\lambda_s,m}^{-\sum_{i=s}^{m-1}c_i}$ by \eqref{rel18}. So
		$$
		\chi_{\beta,m}\cdot a_{\lambda_0}^{-\sum_{j=0}^{m-1}c_j}
		= a_{\lambda_0}^{-c_0}\cdots a_{\lambda_{m-1}}^{-c_{m-1}}
		$$
		which implies  $\mathcal{R}_\alpha\cong \pi^G_{\alpha}(H\uZ)$.\par
		For the induction step, suppose $\beta=\alpha^{H_s}$= $c_s\lambda_s+\cdots+c_{m-1}\lambda_{m-1}+\ell$. We prove
		\begin{enumerate}
			\item [(A)] \namelabel{A} $\mathcal{R}_{\le 0, \alpha}\xrightarrow{a_{\lambda_0}\cdot \chi_{\lambda_0-\lambda_s,m}}\mathcal{R}_{\le 0, \alpha-\lambda_s}$ is surjective.
			\item [(B)] \namelabel{B} $K:=\text{Ker}(\mathcal{R}_{\le 0, \alpha}\xrightarrow{a_{\lambda_0}\cdot \chi_{\lambda_0-\lambda_s,m}}\mathcal{R}_{\le 0, \alpha-\lambda_s})=\begin{cases}
				0, & \text{if $|\beta|<0$}\\
				\langle   a_{\lambda_0}^{d}\cdot \chi_{\lambda_0-\lambda_1,m}^{-c_1}\cdots \chi_{\lambda_0-\lambda_{s-1},m}^{-c_{s-1}}\cdot \chi_{\beta,s} \rangle, &\text{if $|\beta|=0$},
			\end{cases}$
		\end{enumerate}
		where $|\alpha|=2(c_0+\cdots+c_{s-1})+|\beta|=-2d $.
		Once we establish (A) and (B), the theorem can be deduced as follows.  Consider  the diagram
		\begin{myeq}\label{diag1}
			\begin{tikzcd}
				0 \arrow[r]  &  K\arrow[r] \arrow[d] & \mathcal{R}_{\le 0, \alpha} \arrow[rr,"a_{\lambda_0}\cdot \chi_{\lambda_0-\lambda_s,m}"] \arrow[d] && \mathcal{R}_{\le 0, \alpha-\lambda_s} \arrow[r] \arrow[d] &0\\
				0 \arrow[r]           & \pi_{\alpha}^{H_s}( H\uZ) \arrow[r, "\tr^G_{H_s}"]           & \pi_{\alpha}^G(H\uZ) \arrow[rr, "a_{\lambda_s}"]          & & \pi_{\alpha-\lambda_s}^G( H\uZ)\arrow[r] & 0      
			\end{tikzcd}
		\end{myeq}
		The top row is a short exact sequence by \eqref{A} and \eqref{B}. The bottom row comes from the exact sequence 
		\begin{equation*}
			\cdots \to\pi_{\alpha-\lambda_s+1}^G( H\uZ)  \to \pi_{\alpha}^{G}(S(\lambda_s)_+\smas H\uZ) \xrightarrow{\varphi} \pi_{\alpha}^G(H\uZ) \xrightarrow{a_{\lambda_s}}        \pi_{\alpha-\lambda_s}^G( H\uZ)\to \pi_{\alpha-1}^{H_s}( H\uZ) \to \cdots 
		\end{equation*}
		Identifying $\pi_{\alpha}^{G}(S(\lambda_s)_+\smas H\uZ)\cong \pi_{\alpha}^{H_s}(H\uZ)$ (this follows from the same method as \cite[(4.4)]{BG20}), we see that the map $\varphi$ is $\tr^G_{H_s}$.  The group  $\pi_{\alpha-1}^{H_s}( H\uZ)$ is zero as all the fixed point dimensions of $\res_{H_s}(\alpha-1)$ are negative. Thus the bottom row of \eqref{diag1} is exact once we prove\par
		\begin{description}
			\item[(C)] \namelabel{C}  $\tr^G_{H_s}: \pi_{\alpha}^{H_s}( H\uZ)\to \pi_{\alpha}^G(H\uZ) $ is injective.
		\end{description}
		Note that $\res_{H_s}(\alpha)\in \bigstar_{\textup{div},\le 0}^{H_s}$. So the group $\pi_{\alpha}^{H_s}( H\uZ)$ is non-zero only when $|\beta|=0$, and in this case generated by the element $  a_{\lambda_0}^{-c_0}\cdots a_{\lambda_{s-1}}^{-c_{s-1}}$. Its image in $\pi_{\alpha}^G(H\uZ)$ is $\tr^G_{H_s}(a_{\lambda_0}^{-c_0}\cdots a_{\lambda_{s-1}}^{-c_{s-1}})=a_{\lambda_0}^{-c_0}\cdots a_{\lambda_{s-1}}^{-c_{s-1}}\tr^G_{H_s}(1)$, which is same as $a_{\lambda_0}^{-c_0}\cdots a_{\lambda_{s-1}}^{-c_{s-1}}\cdot \chi_{\beta,s} $.
		Therefore the theorem follows by the induction hypothesis and the five-lemma.\par
		{\textbf{Proof of \eqref{C}}}: Note that if $|\alpha^{H_s}|<0 $, then $\pi_{\alpha}^{H_s}( H\uZ)$=0. So we may assume $|\alpha^{H_s}|=0$. Let $\underline{c}=(c_0,\cdots ,c_{s-1})$. If $\underline{c}=0$, i.e., $c_0=\cdots =c_{s-1}=0$, then the composition
		\[
		\Z\cong \pi_\alpha^{H_s}(H\uZ)\xrightarrow{\tr^G_{H_s}}\pi_\alpha^G(H\uZ)\xrightarrow{\res^G_{H_s}}\pi_\alpha^{H_s}(H\uZ)\cong \Z
		\]
		is multiplication by $p^{m-s}$, hence $\tr^G_{H_s}$ is injective if $\underline{c}=0$. In the induction step, we assume  the result holds for $H_i$, $i\le s-1$, and all $ \underline{c}' $ such that  $\underline{c}\le \underline{c}'\le 0$,  (i.e., $c_i\le c_i'\le 0$ for all $i\le s-1$), and we prove it for $\underline{c}$.  Let $i$ be such that $c_i<0$. We have the following diagram
		\begin{equation*}
			\begin{tikzcd}
				\pi^{H_i}_{\alpha+\lambda_i}(H\uZ)  \arrow[d,"="'] \arrow[r, "\tr^{H_s}_{H_i}"] & \pi^{H_s}_{\alpha+\lambda_i}(H\uZ) \arrow[d,"\tr_{H_s}^{G}"] \arrow[r] & \pi^{H_s}_{\alpha}(H\uZ) \arrow[d,"\tr_{H_s}^{G}"] \arrow[r] & 0 \\
				\pi^{H_i}_{\alpha+\lambda_i}(H\uZ) \arrow[r,"\tr^{G}_{H_i}"]           & \pi^{G}_{\alpha+\lambda_i}(H\uZ) \arrow[r]           & \pi^{G}_{\alpha}(H\uZ) \arrow[r]           & 0
			\end{tikzcd}
		\end{equation*}
		By the induction hypothesis, two horizontals and the middle vertical transfer maps are injective, hence it follows that the right vertical $\tr^G_{H_s}$ map is injective.
		\par
		{\textbf{Proof of \eqref{A}}}: We have $\mathcal{R}_{\alpha-\lambda_s}=\mathcal{R}_\beta\cdot a_{\lambda_0}^{1-\frac{|\alpha|}{2}}$, where $\beta=\alpha-\lambda_s+(1-\frac{|\alpha|}{2})\lambda_0$. Then $I(\beta)=(0, i_2,\cdots, i_r )$ where $i_2>s$. So $\mathcal{R}_\beta$ is generated by the elements $\{\chi_{\beta,0}, \chi_{\beta,i_2}, \cdots, \chi_{\beta,i_r} \}$.
		We note from \eqref{rel3} and \eqref{rel18} that $ \chi_{\beta,0}\cdot a_{\lambda_0}=0$ and 
		\begin{equation*}
			a_{\lambda_0}^{1-\frac{|\alpha|}{2}}\cdot  \chi_{\beta,i_k}=a_{\lambda_0}\cdot \chi_{\lambda_0-\lambda_s,m}\cdot a_{\lambda_0}^{-\frac{|\alpha|}{2}}\cdot  \chi_{\beta+\lambda_s-\lambda_0,i_k}= a_{\lambda_0}\cdot \chi_{\lambda_0-\lambda_s}\cdot y
		\end{equation*}
		where $y=a_{\lambda_0}^{-\frac{|\alpha|}{2}}\cdot  \chi_{\alpha-\frac{|\alpha|}{2}\lambda_0,i_k}\in \mathcal{R}_\alpha$. Therefore
		the map in \eqref{A} is surjective.\par
		{\textbf{Proof of \eqref{B}}}: Suppose $|\beta|<0$. 
		As $c_i\le 0$ for $i< s$, we have that 
		$|\alpha|=-2d$ for some $d>0$, and $|\alpha^{H_i}|<0$ for $i\le s$. This implies $I(\alpha)=(\tilde{i}_1,\cdots , \tilde{i}_r)$ where $\tilde{i}_1>s$. First suppose $s>0$. Let $\hat{\alpha}=\alpha+d\lambda_0$ and $\gamma=\alpha-\lambda_s+(d+1)\lambda_0$.   This implies, $I(\hat{\alpha})=I(\gamma)=(i_1,\cdots,i_r)$ where $i_1=0$ and $i_l=\tilde{i}_{l-1}$. 
		We know that $a_{\lambda_0}\cdot \chi_{{\hat{\alpha}},0}=a_{\lambda_0}\cdot \chi_{{{\gamma}},0}=0$, and for $j\ge 2$,
		\begin{equation*}
			a_{\lambda_0}\cdot \chi_{\lambda_0-\lambda_s,m}\cdot (\chi_{\hat{\alpha},i_j}\cdot a_{\lambda_0}^d)=
			a_{\lambda_0}^{d+1}\cdot \chi_{\gamma,i_j}.
		\end{equation*}
		Also,  $q_{k,{k-1}}(\hat{\alpha})=q_{k,{k-1}}(\gamma)$ for $2<k$.
		Although $q_{2,1}(\hat{\alpha})$ and $q_{2,1}(\gamma)$ may be different,  equation \eqref{rel9} and \eqref{rel3} gives us
		\[
		p^{i_2-s_2+1}\chi_{\hat{\alpha},i_2}\cdot a_{\lambda_0}=p^{i_2-s_2+1}\chi_{\gamma,i_2}\cdot a_{\lambda_0}=0.
		\]
		Since for both $\alpha$ and $\alpha-\lambda_s$, all the relations involving the generators $\chi_{\hat{\alpha},i_k}\cdot a_{\lambda_0}^d$ and $\chi_{\gamma, i_k}\cdot a_{\lambda_0}^{d+1}$
		are exactly same, we conclude the case  $|\beta|<0$ and $s>0$. The case  $|\beta|<0$ and $s=0$ follows because multiplication by $a_{\lambda_0}$ is isomorphism if degree is negative.\par
		Now consider the case  $|\beta|=0$. Let $|\alpha|=-2d$. Suppose  $\hat{\alpha}=\alpha+d\lambda_0$ and $\gamma=\alpha-\lambda_s+(d+1)\lambda_0$. Let $S(\alpha)=(s_1,s_2,\cdots,s_r )$. Note that $I(\alpha)$ is of the form $(s,i_2,\cdots,i_r )$. Then $I(\gamma)=(0,i_2,\cdots,i_r)$, and 
		$$
		I(\hat{\alpha})=
		\begin{cases}
			(s,i_2,\cdots, i_r), & \text{if $s_1$ is 0 or 1}\\
			(0,s,i_2,\cdots,i_r), & \text{for $1<s_1\le s$}.
		\end{cases}
		$$
		Here we consider the second choice of $I({\hat{\alpha}})$, the first one is similar. Thus, $ \mathcal{R}_{\hat{\alpha}}$ is generated by  $\{\chi_{\hat{\alpha},0}, \chi_{\hat{\alpha},s}, \chi_{\hat{\alpha},i_2},\cdots, \chi_{\hat{\alpha},i_r}\}$ and $ \mathcal{R}_{\gamma}$ is generated by $\{\chi_{\gamma,0}, \chi_{\gamma,i_2},\cdots,\chi_{\gamma,i_r}\}$. 
		For $j\ge 2$, we obtain
		\[
		a_{\lambda_0}\cdot \chi_{\lambda_0-\lambda_s,m}\cdot (\chi_{\hat{\alpha},i_j}\cdot a_{\lambda_0}^d)=
		a_{\lambda_0}^{d+1}\cdot \chi_{\gamma,i_j}.
		\]
		Moreover, for $k\ge 3$, $q_{i_k,i_{k-1}}$ is  same for $\hat{\alpha}$ and $\gamma$. Note that, $a_{\lambda_0}\cdot \chi_{\lambda_0-\lambda_s,m}\cdot \chi_{\hat{\alpha},s}=0$, and
		\begin{equation*}
			\begin{aligned}
				a_{\lambda_0}^{d}\cdot \chi_{\lambda_0-\lambda_1,m}^{-c_1}\cdots \chi_{\lambda_0-\lambda_{s-1},m}^{-c_{s-1}}\cdot \chi_{\beta,s}=	a_{\lambda_0}^d\cdot \chi_{-c_1(\lambda_0-\lambda_1)-\cdots-c_{s-1}(\lambda_0-\lambda_{s-1}),m }\cdot \chi_{\beta,s}=a_{\lambda_0}^d\cdot \chi_{\hat{\alpha},s}.
			\end{aligned}
		\end{equation*}
		Next we compare $\chi_{\gamma,i_2}$ with $\chi_{\gamma,0}$. By equation \eqref{rel9} and \eqref{rel3}, we get 
		\begin{myeq}\label{wqz}
			p^{i_2-s_2(\gamma)+1}(\chi_{\gamma,i_2}\cdot a_{\lambda_0}^{d+1})=0.
		\end{myeq}
		We have the following relation involving  $\chi_{{\hat{\alpha}},s}$ and $\chi_{{\hat{\alpha}},i_2}$ by \eqref{rel9}
		\begin{myeq}\label{wre}
			p^{s+q_{i_2,s}(\hat{\alpha})-s_3({\hat{\alpha}})+1}\chi_{\hat{\alpha},s}\cdot a_{\lambda_0}^d-p^{i_2-s_3({\hat{\alpha}})+1}\chi_{\hat{\alpha},{i_2}}\cdot a_{\lambda_0}^d=0.
		\end{myeq}
		Observe that $s<s_3({\hat{\alpha}})\le i_2$ and  $s_3({\hat{\alpha}})=s_2(\gamma)$. The first term in \eqref{wre} lies in the kernel $K$. Hence the map $a_{\lambda_0}\cdot \chi_{\lambda_0-\lambda_s,m}$ sends the relation \eqref{wre} to the relation  \eqref{wqz}. Thus the kernel of $a_{\lambda_0}\cdot \chi_{\lambda_0-\lambda_s,m}$ is spanned by $\chi_{\hat{\alpha},s}$, which completes the proof of \eqref{B}.
	\end{proof}
	Next we consider  $G=C_{2^m} $ case. We may define the classes $\chi_{\alpha,s}$ in an analogous way as in \eqref{chidef} and Definition \ref{defi}. Along with $a_{\lambda_0}$, they  satisfy relations  \eqref{rel9}, \eqref{rel18} and \eqref{rel3}. In this case,   $a_\sigma\in \pi_{-\sigma}^G(H\uZ)$ is not a multiple of $a_{\lambda_0}$ and  satisfies 
	\begin{myeq}\label{rel27}
		2a_\sigma=0, \qquad 	a_{\sigma}^2=a_{\lambda_0}\chi_{\lambda_0-\lambda_{m-1},m}, \quad\textup{~and~}\quad a_\sigma\chi_{\alpha,s}=0 \textup{~for~} s\le m-1.
	\end{myeq}
	Let $ \bigstar^e_{\le 0}$ comprise those elements of  ${\bigstar_{\le 0}}$ which does not involve $\sigma$, i.e, of the form $\sum_{i=0}^{m-1}c_i\lambda_i+r $.
	\begin{exam}
		Note that $\pi^G_{\sigma-1}(H\uZ)=0$. This is because $S^{-\sigma}\simeq\textup{~Fib~}(S^0\to {G/{H_{m-1}}}_+)$. 
		This gives the long exact sequence 
		\[
		\cdots \to\pi_0^G( H\uZ)  \xrightarrow{\res^G_{H_{m-1}}} \pi_{0}^{{H_{m-1}}}( H\uZ) \xrightarrow{} \pi_{-1}^G(S^{-\sigma}\smas H\uZ) \xrightarrow{}        \pi_{-1}^G( H\uZ)\to  \cdots 
		\]
		Since the map $\res^G_{H_{m-1}}$ is identity, we have $\pi^G_{\sigma-1}(H\uZ)=0$. This implies $\chi_\alpha$ in Definition \ref{defc} works only for $\alpha\in {\bigstar_{\le 0}^e}$.
	\end{exam}
	We see some further relations in this case.
	\begin{prop}\label{propev}
		Let $\beta=\sum_{i=0}^{m-1}c_i\lambda_i+r +\sigma\in {\bigstar_{\le 0}}$ be such that $|\beta|=0$. Consider $\chi_{\beta,s}\in \pi_\beta^G(H\uZ)$ with $s<m$. Then
	\end{prop}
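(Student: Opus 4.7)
The plan is to exploit the factorization available when $s<m$: since $H_s\le H_{m-1}$, one has
\[
\chi_{\beta,s}=\tr^G_{H_{m-1}}\bigl(\tr^{H_{m-1}}_{H_s}\chi_{\res_{H_s}(\beta)}\bigr).
\]
Because $\sigma$ restricts to the trivial $1$-dimensional representation on $H_{m-1}$, the grading $\res_{H_{m-1}}(\beta)$ lies in $\bigstar^e_{\le 0}$ for $H_{m-1}=C_{2^{m-1}}$, where $\chi$-classes are already defined via Definition~\ref{defc}. By induction on $m$ (the inductive hypothesis being the statement of Theorem~\ref{neodd} together with Proposition~\ref{propev} for $H_{m-1}$), I can identify the inner transfer $\tr^{H_{m-1}}_{H_s}(\chi_{\res_{H_s}(\beta)})$ with the class $\chi^{H_{m-1}}_{\res_{H_{m-1}}(\beta),s}$. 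Thus $\chi^G_{\beta,s}=\tr^G_{H_{m-1}}\bigl(\chi^{H_{m-1}}_{\res_{H_{m-1}}(\beta),s}\bigr)$, reducing the analysis to a single outer transfer along the index-$2$ inclusion $H_{m-1}\le G$.

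The key tool for studying this outer transfer is the Gysin cofibre sequence $(G/H_{m-1})_+\to S^0\xrightarrow{a_\sigma}S^\sigma$, which produces the long exact sequence
\[
\pi^{H_{m-1}}_{\res_{H_{m-1}}(\beta)}(H\uZ)\xrightarrow{\tr^G_{H_{m-1}}}\pi^G_\beta(H\uZ)\xrightarrow{a_\sigma}\pi^G_{\beta-\sigma}(H\uZ)\to\pi^{H_{m-1}}_{\res_{H_{m-1}}(\beta)-1}(H\uZ).
\]
This immediately recovers the relation $a_\sigma\chi_{\beta,s}=0$ of \eqref{rel27} and exhibits $\chi_{\beta,s}$ as an element in the image of $\tr^G_{H_{m-1}}$. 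Using the fact that $\mathcal{F}^G:\pi^G_\alpha(S^0)\to C(G)$ is injective on $\bigstar_{\le 0}$ (together with $S^0\simeq H\uA$ in non-positive degrees and the map $H\uA\to H\uZ$), I would then read off the explicit expression for $\chi_{\beta,s}$ by evaluating fixed-point degrees via \eqref{fixa} and \eqref{fix1}, and cross-checking against the images of any candidate generators on the right.

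The main obstacle is the precise $2$-adic bookkeeping. Because $\res^G_{H_{m-1}}\circ\tr^G_{H_{m-1}}$ acts as multiplication by $2$ on $H_{m-1}$-fixed parts, a single class $\chi^{H_{m-1}}_{\cdot,s}$ can map to a combination of the standard generators $\chi^G_{\cdot,s'}$ of $\pi^G_\beta(H\uZ)$ with $2$-power coefficients whose exponents depend delicately on the sequences $I(\res_{H_{m-1}}(\beta))$ and $S(\res_{H_{m-1}}(\beta))$ of Notation~\ref{defi}. Matching these to the generators and relations \eqref{rel9}, \eqref{rel18}, \eqref{rel3}, \eqref{rel27} requires a careful case split according to whether $|\beta^{H_{m-1}}|$ is zero or negative (the latter forcing $\chi^{H_{m-1}}_{\res_{H_{m-1}}(\beta),s}$ to lie in the $a_{\lambda_0}$-image) and a parallel analysis at $H_s$. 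Once the formula is verified on fixed-point degrees via $\mathcal{F}^G$, injectivity of $\mathcal{F}^G$ on $\bigstar_{\le 0}$ closes the argument.
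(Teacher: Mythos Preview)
There is a genuine gap. The proposition has two concrete conclusions, namely \eqref{chitwo}: (1) $2\chi_{\beta,s}=0$ and (2) $\chi_{\beta,m-1}=0$ when $|\beta^G|=-1$. Your proposal never isolates either of these, and the mechanism you invoke for the outer transfer is actually backwards. You write that ``$\res^G_{H_{m-1}}\circ\tr^G_{H_{m-1}}$ acts as multiplication by $2$ on $H_{m-1}$-fixed parts''. But the double coset formula gives $\res^G_{H_{m-1}}\tr^G_{H_{m-1}}(x)=x+g\cdot x$, and the whole point is that because $\beta$ contains a copy of $\sigma$, the generator $g\in G/H_{m-1}$ acts by $-1$ on $\pi^{H_{m-1}}_\beta(H\uZ)$, so $\res\circ\tr=0$, not $2$. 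That sign is exactly what delivers part~(1): writing $\chi_{\beta,s}=\tr^G_{H_{m-1}}(y)$, one has $2\chi_{\beta,s}=\tr^G_{H_{m-1}}\res^G_{H_{m-1}}\tr^G_{H_{m-1}}(y)=\tr^G_{H_{m-1}}(0)=0$. Your formulation misses this and would instead suggest $\res\tr(y)=2y$, from which nothing follows.

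Two further points. First, the injectivity of $\mathcal{F}^G$ is a statement about $\pi^G_\alpha(S^0)\cong\pi^G_\alpha(H\uA)$; the map $\pi^G_\alpha(H\uA)\to\pi^G_\alpha(H\uZ)$ is only surjective, so fixed-point degrees do not detect vanishing in $H\uZ$ in the way you suggest. Second, for part~(2) the Gysin sequence you wrote down is the right tool, but the missing step is to exhibit $\chi_{\res_{H_{m-1}}(\beta)}$ as lying in the image of $\res^G_{H_{m-1}}$ from $\pi^G_{\beta-\sigma+1}(H\uZ)$, so that exactness kills it under $\tr^G_{H_{m-1}}$. This is where the hypothesis $|\beta^G|=-1$ enters: it guarantees $\beta-\sigma+1\in\bigstar^e_{\le 0}$ with $|(\beta-\sigma+1)^G|=0$, so that $\chi_{\beta-\sigma+1,m}$ is defined and restricts to $\chi_{\res_{H_{m-1}}(\beta)}$. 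None of the ``$2$-adic bookkeeping'' or case analysis you anticipate is needed.
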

	\begin{myeq}\label{chitwo}
		(1) \,	2\chi_{\beta,s}=0, \quad \textup{~~}  \quad (2)\,\chi_{\beta,m-1}=0 \textup{~if~} |\beta^G|=-1.
	\end{myeq}
	\begin{proof}
		For (1), let $g$ be a generator of ${G/{H_{m-1}}}_+ $, which acts on $\upi^G_{\alpha}(H\uZ)(G/{H_{m-1}})$.  Let  $x\in \pi_\beta^{H_{m-1}}(H\uZ) $. Then 
		$$
		\res^G_{H_{m-1}}\tr^G_{H_{m-1}}(x)=x+g(x)=0,
		$$  
		as $g$ acts by $-1$, since $\beta$ contains a copy of $\sigma$. This implies,
		\[
		2\chi_{\beta,s}=\tr^G_{H_{m-1}}\res^G_{H_{m-1}}\chi_{\beta,s}= \tr^G_{H_{m-1}}\res^G_{H_{m-1}}\tr^G_{H_{m-1}}\chi_{\beta,s}^{H_{m-1}}=0.
		\]
		For (2), consider the  diagram
		\begin{myeq}\label{diagev}
			\xymatrix@R=0.5cm@C=0.5cm{
				\cdots\ar[r]&	\pi_{\beta-\sigma+1}^{G}(H\uZ)\ar[rr]^-{\res^G_{H_{m-1}}} \ar[d]_-{\res^G_{H_{m-1}}}
				&	& \pi_\beta^{H_{m-1}}(H\uZ) \ar[rr]^-{\tr^G_{H_{m-1}}} &
				& \pi_\beta^G(H\uZ)\ar[r]^-{a_\sigma}&\pi_{\beta-\sigma}^{G}(H\uZ)\ar[d]^-{\cong}\\ 
				&	\pi_{\beta-\sigma+1}^{H_{m-1}}(H\uZ)\ar[rru]_-{\cong}		 && &&& 0
			}
		\end{myeq}
		If $|\beta^G|=-1$, i.e, $|\beta-\sigma+1^G|=0$, then 
		$\res^G_{H_{m-1}}(\chi_{\beta-\sigma+1,m})=\chi_{\beta-\sigma+1,m-1}^{H_{m-1}}$. This implies (2).
	\end{proof}
	Let $\hat{\rho}$ denote the set of relations  \eqref{rel9}, \eqref{rel18},  \eqref{rel3},  \eqref{rel27}, and \eqref{chitwo}. 
	Define
	\[
	\hat{\Gamma }=\Z[a_\sigma, a_{\lambda_0}, \chi_{\alpha,s}\mid \alpha\in {\bigstar_{\le 0}}, |\alpha|=0, 0\le s\le m \textup{~with~} s\ne m \textup{~if~} \alpha\not\in {\bigstar_{\le 0}^e}]
	\]
	and 
	\[
	\hat{\mathcal{R}}_{\le 0}=\hat{\Gamma}/(\hat{\rho}).
	\]
	\begin{thm}\label{negen}
		The ring $\pi^G_{\bigstar_{\le 0}}(H\uZ)$ is isomorphic to $\hat{\mathcal{R}}_{\le0}$.
	\end{thm}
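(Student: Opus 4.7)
The plan is to follow the strategy of Theorem \ref{neodd} with two sectors: first the $\sigma$-free part $\bigstar^e_{\le 0}$, and then the sector of $\alpha$ containing a $\sigma$-summand, using the Euler cofibre for $\sigma$ to bridge the two. Throughout I would induct outer on $|G|$, so that the theorem may be assumed for all proper subgroups of $C_{2^m}$.

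For $\alpha \in \bigstar^e_{\le 0}$ the proof of Theorem \ref{neodd} adapts verbatim: the cellular decomposition, the fixed-point identity \eqref{fix1}, the outer/inner inductions, the surjectivity \eqref{A}, the kernel computation \eqref{B}, and the injectivity of the transfer \eqref{C} are all insensitive to the parity of $p$. The base case of the inner induction appeals to Theorem \ref{hzthm} in place of Theorem \ref{thm ring z ev}, which for $\alpha$ without a $\sigma$-summand prescribes the same generators and relations. The additional relations \eqref{rel27} and \eqref{chitwo} are vacuous on $\bigstar^e_{\le 0}$, so this establishes $\hat{\mathcal{R}}_{\le 0, \alpha} \cong \pi_\alpha^G(H\uZ)$ on this sector.

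For $\alpha = \gamma + \sigma$ with $\gamma \in \bigstar^e_{\le 0}$, I would smash the cofibre sequence $S(\sigma)_+ \to S^0 \xrightarrow{a_\sigma} S^\sigma$ with $H\uZ$. Using $S(\sigma)_+ \simeq (G/H_{m-1})_+$ and $\res^G_{H_{m-1}} \sigma \cong 1$, this yields the long exact sequence
\begin{equation*}
\pi_{\gamma+1}^{H_{m-1}}(H\uZ) \xrightarrow{\tr^G_{H_{m-1}}} \pi_{\gamma+\sigma}^G(H\uZ) \xrightarrow{a_\sigma} \pi_\gamma^G(H\uZ) \xrightarrow{\partial} \pi_{\gamma}^{H_{m-1}}(H\uZ) \xrightarrow{\tr^G_{H_{m-1}}} \pi_\gamma^G(H\uZ).
\end{equation*}
The four outer groups are known from the first step (applied to $\gamma$ and $\gamma+1$) and the outer induction hypothesis (applied to $H_{m-1}$). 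I would then compare with an analogous algebraic sequence extracted from $\hat{\mathcal{R}}_{\le 0}$, sending the generator $\chi_{\gamma+\sigma, s}$ to the transfer $\tr^G_{H_{m-1}}$ of the class from the first step representing $\gamma+1$ at level $s$, and $a_\sigma$ to itself, concluding by a five-lemma argument parallel to \eqref{diag1}.

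The main obstacle is verifying that the relations \eqref{rel27} and \eqref{chitwo} cut $\hat{\mathcal{R}}_{\le 0, \gamma+\sigma}$ down to precisely the size forced by the exact sequence. The identity $a_\sigma^2 = a_{\lambda_0}\chi_{\lambda_0 - \lambda_{m-1}, m}$ follows from $2\sigma = \lambda_{m-1}$ in $RO(G)$; the annihilation $a_\sigma \chi_{\alpha,s} = 0$ for $s \le m-1$ follows from $\res^G_{H_{m-1}}(a_\sigma) = 0$ together with Frobenius reciprocity applied to $\chi_{\alpha,s} = \tr^G_{H_s}(\chi_{\res(\alpha)})$; and \eqref{chitwo}(1), \eqref{chitwo}(2) are Proposition \ref{propev}. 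The delicate bookkeeping lies at boundary gradings where $|\gamma^G| = -1$, precisely where the rank drop forced by \eqref{chitwo}(2) must match the image of the left-hand transfer above. Once the orders match, surjectivity of the algebra-to-topology map, which is immediate from the construction of $\chi_{\gamma+\sigma, s}$ via transfer, finishes the argument.
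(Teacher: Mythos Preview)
Your overall strategy matches the paper's: establish the $\sigma$-free sector $\bigstar^e_{\le 0}$ by rerunning Theorem \ref{neodd}, then bridge to $\sigma$-containing gradings via the cofibre sequence for $a_\sigma$. The gap lies in the execution of the second step.

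The fifth term of your long exact sequence is not $\pi_\gamma^G(H\uZ)$ but $\pi_{\gamma+\sigma-1}^G(H\uZ)$, which again contains a copy of $\sigma$ and is therefore of the same unknown type you are trying to compute. A direct five-lemma does not close as stated. The paper sidesteps this circularity by splitting on the parity of $|\alpha|$. When $|\alpha|$ is odd it recentres the sequence one step over, using $\pi_{\alpha+\sigma}^G\xrightarrow{a_\sigma}\pi_\alpha^G$ with $\alpha+\sigma\in\bigstar^e_{\le 0}$ known; the term to the right of $\pi_\alpha^G$ is then $\pi^{H_{m-1}}_{\res(\alpha)}$, which vanishes because $\res_{H_{m-1}}(\alpha)\in\bigstar^e_{\le 0}$ has odd underlying dimension, collapsing the picture to a short exact sequence with injective transfer on the left. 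When $|\alpha|$ is even your orientation is the correct one, but now $|\gamma|$ is odd so $\pi_\gamma^G(H\uZ)=0$, and the sequence collapses to $\pi_{\gamma+1}^G\xrightarrow{\res}\pi_{\gamma+1}^{H_{m-1}}\xrightarrow{\tr}\pi_{\gamma+\sigma}^G\to 0$ with $\gamma+1\in\bigstar^e_{\le 0}$; the paper then computes the image of restriction explicitly, which is exactly where Proposition \ref{propev}(2) enters to account for the rank drop at $|\alpha^G|=-1$. With this parity split inserted, each case reduces to a three- or four-term sequence with all auxiliary terms already known, and your sketch goes through; without it, the five-lemma step is circular.
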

	\begin{proof}
		We note that 
		\[
		\hat{\mathcal{R}}_{\bigstar_{\le 0}^e}=\Z[ a_{\lambda_0}, \chi_{\alpha,s}\mid \alpha\in {\bigstar_{\le 0}^e}, |\alpha|=0, 0\le s\le m]/\langle  \rho \rangle 
		\]
		where $\rho$ are the relations generated by \eqref{rel9}, \eqref{rel18}, and  \eqref{rel3}. An entirely analogous argument as in Theorem \ref{neodd} implies $\hat{\mathcal{R}}_{{\bigstar_{\le 0}^e}}\cong \pi_{{\bigstar_{\le 0}^e}}^G(H\uZ)$.
		Now let $\alpha\in \bigstar_{\le 0}\setminus {{\bigstar_{\le 0}^e}}$  be of the form $\alpha=\sum_{i=0}^{m-1}c_i\lambda_i+r +\sigma$.\par 
		\textbf{Case 1}: $|\alpha|$ is odd. So $|\alpha^G|=r$ is even.
		Consider the diagram
		\begin{equation*}
			\begin{tikzcd}
				& & \hat{\mathcal{R}}_{\alpha+\sigma} \arrow[r,"a_\sigma"] \arrow[d,"\cong"] & \hat{\mathcal{R}}_{\alpha} \arrow[r] \arrow[d] &0\\
				0 \arrow[r]           & \pi_{{\alpha}+\sigma}^{H_{m-1}}( H\uZ) \arrow[r, "\tr^G_{H_{m-1}}"]           & \pi_{\alpha+\sigma}^G(H\uZ) \arrow[r, "a_{\sigma}"]          & \pi_{\alpha}^G( H\uZ)\arrow[r] & 0      
			\end{tikzcd}
		\end{equation*}
		Note that multiplication by $a_\sigma$ is surjective in the the top row. The bottom row arises from the exact sequence 
		\begin{equation*}
			\cdots \to\pi_{\alpha+1}^G( H\uZ)  \to \pi_{\alpha+\sigma}^{G}({G/H_{m-1}}_+\smas H\uZ) \xrightarrow{\varphi} \pi_{\alpha+\sigma}^G(H\uZ) \xrightarrow{a_{\sigma}}        \pi_{\alpha}^G( H\uZ)\to \pi_{\res{(\alpha)}}^{H_{m-1}}(  H\uZ) \to \cdots 
		\end{equation*}
		By the induction hypothesis, the group $\pi_{\res{(\alpha)}}^{H_{m-1}}(  H\uZ)=0$ as $\res(\alpha)\in {\bigstar^e_{\le 0}}$ and $|\res(\alpha)^{H_i}|$ is odd for all $i$.  An analogous argument as in \eqref{C} shows that the map $\varphi=\tr^G_{H_{m-1}}$ is injective. Hence the bottom row is exact. 
		Let $|\alpha+\sigma|=-2d\le 0$, and $\beta=\alpha+\sigma+d\lambda_0$. So $|\beta|=0$. Let $I(\beta)=(i_1,\cdots, i_r)$. Since  $\alpha+\sigma\in {\bigstar^e_{\le 0}}$, $\pi^G_{\alpha+\sigma}(H\uZ)\cong \hat{\mathcal{R}}_{\alpha+\sigma}$, and generated by the classes ${\{\chi_{\beta,i_1}\cdot a_{\lambda_0}^d, \cdots, \chi_{\beta,i_r}\cdot a_{\lambda_0}^d\}}$. If $i_r\ne m$, then all of them map to zero along the top row by \eqref{rel27}, also along the bottom row as they lie in the image of $\tr^G_{H_{m-1}}$. If $i_r=m$, then we get $\hat{\mathcal{R}}_\alpha\cong \Z/2\{\chi_{\beta,m}\hspace{.02cm}a_\sigma \hspace{.02cm} a_{\lambda_0}^d\}$ by \eqref{rel27}. 
		Also, we see that 
		$$
		2\chi_{\beta,m}=\tr^G_{H_{m-1}}\res^G_{H_{m-1}}\chi_{\beta,m}=\tr^G_{H_{m-1}}(\chi_{\beta,m-1}^{H_{m-1}})
		$$
		as $\res^G_{H_{m-1}}(\chi_{\beta,m})=\chi^{H_{m-1}}_{\beta,m-1}$. So $\pi_\alpha^G(H\uZ)\cong \hat{\mathcal{R}}_\alpha\cong0$.\par
		\textbf{Case 2}:  $|\alpha|$ is even.   This implies 
		$|\alpha^G|=r$ is odd, so $<0$. Let $|\alpha|=-2d\le 0$, and $\beta=\alpha+d\lambda_0$. So $|\beta|=0$. Consider the  diagram as in \eqref{diagev}.
		We see that for $s\le m-1$, 
		\[
		\res^G_{H_{m-1}}(\chi_{\beta-\sigma+1,s})=\res^G_{H_{m-1}}\tr^G_{H_{m-1}}(\chi_{\beta-\sigma+1,s}^{H_{m-1}})=(1+g)(\chi_{\beta-\sigma+1,s}^{H_{m-1}})=2\chi_{\beta-\sigma+1,s}^{H_{m-1}},
		\]
		here $g$ acts trivially because $\beta-\sigma+1\in {\bigstar_{\le 0}^e}$. This together with part (2) of Proposition \ref{propev} completes the proof.
	\end{proof}

\end{document}